\newtheorem{thm}{Theorem}[section]
\newtheorem{cor}[thm]{Corollary}
\newtheorem{prop}[thm]{Proposition}
\newtheorem{lem}[thm]{Lemma}
\theoremstyle{definition}
\newtheorem{defn}[thm]{Definition}
\newtheorem{example}[thm]{Example}
\newtheorem{lemma}[thm]{Lemma}
\theoremstyle{remark}
\newtheorem{remark}[thm]{Remark}
\newcommand{\nc}{\newcommand}
\nc{\rnc}{\renewcommand}
\nc{\bb}[1]{{\mathbb #1}}
\nc{\bbA}{\bb{A}}\nc{\bbB}{\bb{B}}\nc{\bbC}{\bb{C}}\nc{\bbD}{\bb{D}}
\nc{\bbE}{\bb{E}}\nc{\bbF}{\bb{F}}\nc{\bbG}{\bb{G}}\nc{\bbH}{\bb{H}}
\nc{\bbI}{\bb{I}}\nc{\bbJ}{\bb{J}}\nc{\bbK}{\bb{K}}\nc{\bbL}{\bb{L}}
\nc{\bbM}{\bb{M}}\nc{\bbN}{\bb{N}}\nc{\bbO}{\bb{O}}\nc{\bbP}{\bb{P}}
\nc{\bbQ}{\bb{Q}}\nc{\bbR}{\bb{R}}\nc{\bbS}{\bb{S}}\nc{\bbT}{\bb{T}}
\nc{\bbU}{\bb{U}}\nc{\bbV}{\bb{V}}\nc{\bbW}{\bb{W}}\nc{\bbX}{\bb{X}}
\nc{\bbY}{\bb{Y}}\nc{\bbZ}{\bb{Z}}
\nc{\mbf}[1]{{\mathbf #1}}
\nc{\bfA}{\mbf{A}}\nc{\bfB}{\mbf{B}}\nc{\bfC}{\mbf{C}}\nc{\bfD}{\mbf{D}}
\nc{\bfE}{\mbf{E}}\nc{\bfF}{\mbf{F}}\nc{\bfG}{\mbf{G}}\nc{\bfH}{\mbf{H}}
\nc{\bfI}{\mbf{I}}\nc{\bfJ}{\mbf{J}}\nc{\bfK}{\mbf{K}}\nc{\bfL}{\mbf{L}}
\nc{\bfM}{\mbf{M}}\nc{\bfN}{\mbf{N}}\nc{\bfO}{\mbf{O}}\nc{\bfP}{\mbf{P}}
\nc{\bfQ}{\mbf{Q}}\nc{\bfR}{\mbf{R}}\nc{\bfS}{\mbf{S}}\nc{\bfT}{\mbf{T}}
\nc{\bfU}{\mbf{U}}\nc{\bfV}{\mbf{V}}\nc{\bfW}{\mbf{W}}\nc{\bfX}{\mbf{X}}
\nc{\bfY}{\mbf{Y}}\nc{\bfZ}{\mbf{Z}}
\nc{\bfa}{\mbf{a}}\nc{\bfb}{\mbf{b}}\nc{\bfc}{\mbf{c}}\nc{\bfd}{\mbf{d}}
\nc{\bfe}{\mbf{e}}\nc{\bff}{\mbf{f}}\nc{\bfg}{\mbf{g}}\nc{\bfh}{\mbf{h}}
\nc{\bfi}{\mbf{i}}\nc{\bfj}{\mbf{j}}\nc{\bfk}{\mbf{k}}\nc{\bfl}{\mbf{l}}
\nc{\bfm}{\mbf{m}}\nc{\bfn}{\mbf{n}}\nc{\bfo}{\mbf{o}}\nc{\bfp}{\mbf{p}}
\nc{\bfq}{\mbf{q}}\nc{\bfr}{\mbf{r}}\nc{\bfs}{\mbf{s}}\nc{\bft}{\mbf{t}}
\nc{\bfu}{\mbf{u}}\nc{\bfv}{\mbf{v}}\nc{\bfw}{\mbf{w}}\nc{\bfx}{\mbf{x}}
\nc{\bfy}{\mbf{y}}\nc{\bfz}{\mbf{z}}
\nc{\mcal}[1]{{\mathcal #1}}
\nc{\calA}{\mcal{A}}\nc{\calB}{\mcal{B}}\nc{\calC}{\mcal{C}}\nc{\calD}{\mcal{D}}
\nc{\calE}{\mcal{E}} \nc{\calF}{\mcal{F}}\nc{\calG}{\mcal{G}}\nc{\calH}{\mcal{H}}
\nc{\calI}{\mcal{I}}\nc{\calJ}{\mcal{J}}\nc{\calK}{\mcal{K}}\nc{\calL}{\mcal{L}}
\nc{\calM}{\mcal{M}}\nc{\calN}{\mcal{N}}\nc{\calO}{\mcal{O}}\nc{\calP}{\mcal{P}}
\nc{\calQ}{\mcal{Q}}\nc{\calR}{\mcal{R}}\nc{\calS}{\mcal{S}}\nc{\calT}{\mcal{T}}
\nc{\calU}{\mcal{U}}\nc{\calV}{\mcal{V}}\nc{\calW}{\mcal{W}}\nc{\calX}{\mcal{X}}
\nc{\calY}{\mcal{Y}}\nc{\calZ}{\mcal{Z}}
\nc{\fA}{\frak{A}}\nc{\fB}{\frak{B}}\nc{\fC}{\frak{C}} \nc{\fD}{\frak{D}}
\nc{\fE}{\frak{E}}\nc{\fF}{\frak{F}}\nc{\fG}{\frak{G}}\nc{\fH}{\frak{H}}
\nc{\fI}{\frak{I}}\nc{\fJ}{\frak{J}}\nc{\fK}{\frak{K}}\nc{\fL}{\frak{L}}
\nc{\fM}{\frak{M}}\nc{\fN}{\frak{N}}\nc{\fO}{\frak{O}}\nc{\fP}{\frak{P}}
\nc{\fQ}{\frak{Q}}\nc{\fR}{\frak{R}}\nc{\fS}{\frak{S}}\nc{\fT}{\frak{T}}
\nc{\fU}{\frak{U}}\nc{\fV}{\frak{V}}\nc{\fW}{\frak{W}}\nc{\fX}{\frak{X}}
\nc{\fY}{\frak{Y}}\nc{\fZ}{\frak{Z}}
\nc{\fa}{\frak{a}}\nc{\fb}{\frak{b}}\nc{\fc}{\frak{c}} \nc{\fd}{\frak{d}}
\nc{\fe}{\frak{e}}\nc{\fFf}{\frak{f}}\nc{\fg}{\frak{g}}\nc{\fh}{\frak{h}}
\nc{\fri}{\frak{i}}\nc{\fj}{\frak{j}}\nc{\fk}{\frak{k}}\nc{\fl}{\frak{l}}
\nc{\fm}{\frak{m}}\nc{\fn}{\frak{n}}\nc{\fo}{\frak{o}}\nc{\fp}{\frak{p}}
\nc{\fq}{\frak{q}}\nc{\fr}{\frak{r}}\nc{\fs}{\frak{s}}\nc{\ft}{\frak{t}}
\nc{\fu}{\frak{u}}\nc{\fv}{\frak{v}}\nc{\fw}{\frak{w}}\nc{\fx}{\frak{x}}
\nc{\fy}{\frak{y}}\nc{\fz}{\frak{z}}
\newcommand{\C}{{\mathbb C}}
\newcommand{\bP}{{\mathbb{P}}}
\newcommand{\Z}{{\mathbb Z}}
\newcommand{\Q}{{\mathbb Q}}
\newcommand{\caD}{{\mathcal D}}
\newcommand{\cO}{{\mathcal O}}
\newcommand{\Gr}{\mathrm{Gr}}
\newcommand{\csm}{c_{\mathrm{SM}}}
\newcommand{\ssm}{s_{\mathrm{M}}}
\newcommand{\one}{1\hskip-3.5pt1}
\DeclareMathOperator{\loc}{loc}
\DeclareMathOperator{\RHom}{RHom}
\DeclareMathOperator{\QH}{QH}
\DeclareMathOperator{\QK}{QK}
\DeclareMathOperator{\ev}{ev}
\DeclareMathOperator{\Mb}{\overline{\mathcal{M}}}
\DeclareMathOperator{\incl}{incl}
\begin{document}
\title[Left Demazure-Lusztig operators]{Left Demazure-Lusztig operators on equivariant (quantum) cohomology and K-theory}

\author{Leonardo C.~Mihalcea}
\address{
Department of Mathematics, 
Virginia Tech University, 
Blacksburg, VA 24061
USA
}
\email{lmihalce@vt.edu}

\author{Hiroshi Naruse}
\address{Graduate School of Education, University of Yamanashi, 
Kofu, 400-8510, Japan}
\email{hnaruse@yamanashi.ac.jp}

\author{Changjian Su}
\address{Department of Mathematics, University of Toronto,   
40 St. George St., Toronto, Ontario 
M5S 2E4
Canada}
\email{csu@math.toronto.edu}

\subjclass[2020]{Primary 14M15, 20C08; Secondary 14C17, 05E10}
\keywords{divided difference operators, Demazure-Lusztig operators, Chern-Schwartz-MacPherson class, motivic Chern class, flag manifold, Schubert cells .}

\thanks{L.~C.~Mihalcea was supported in part by a Simons Collaboration Grant. H.~Naruse was supported in part by JSPS KAKENHI Grant Number 16H03921.}

\date{\today}
\begin{abstract} We study the Demazure-Lusztig operators induced by the left multiplication on partial flag manifolds $G/P$. We prove that they 
generate the Chern-Schwartz-MacPherson classes of Schubert cells (in equivariant cohomology), respectively their motivic Chern classes (in equivariant K-theory), in any partial flag manifold. Along the way we advertise many properties of the left and right divided difference operators in cohomology and K-theory, and their actions on Schubert classes. We apply this to construct left divided difference operators in equivariant quantum cohomology, and equivariant quantum K-theory, generating Schubert classes, and satisfying a Leibniz rule compatible with the quantum product. 
\end{abstract}

\maketitle


\section{Introduction} Let $G$ be a complex, semisimple, Lie group, and fix a Borel subgroup $B$, and a maximal torus $T:= B \cap B^-$, where $B^-$ is the opposite Borel subgroup. Fix also $P \supset B$ a standard parabolic subgroup. 

It has been known for some time that the Hecke algebra of the Weyl group $W:=N_G(T)/T$, and its degenerations (such as the nil-Hecke algebra), act on the cohomology and K-theory of flag manifolds $G/B$. What is perhaps less advertised is that each Hecke action is given by operators which come in a {\em left}, and a {\em right} flavor, with the two flavors playing a complementary role. For the equivariant cohomology ring $H^*_T(G/B)$, Knutson \cite{knutson:noncomplex} observed that the source of left/right flavors lies in two different automorphisms of $G/B$, given by the left and the right multiplication by elements of the Weyl group $W$.

The left $W$-action on a cohomology theory of $G/P$ is induced by the left multiplication by (representatives of) elements in $W$. This action is trivial in a {\em non-equivariant} cohomology theory.~The right $W$ action is only defined for cohomology theories of the full flag variety $G/B$, and it involves a non-algebraic map. First, one identifies $G/B = K/T_{\bb R}$, where $K \le G$ is a maximal compact subgroup with a (real) maximal torus $T_{\bb R}$. Then the right multiplication by elements of $W$ gives a well defined automorphism; see~\cite{knutson:noncomplex} or \S \ref{ss:leftrightW} below.

The left and right $W$ actions determine `left' and `right' divided difference operators, acting on (equivariant) cohomology/K-theory rings $H^*_T(G/B)$ and $K_T(G/B)$. The right operators are the classical BGG operators \cite{BGG} in cohomology, and Demazure 
operators \cite{demazure:desingularisations} in K-theory.~Algebraic versions of these operators were long used to define and study Schubert and Grothendieck polynomials, see e.g.~ \cite{lascoux.schutz:polynomes,lascoux:anneau}.~They were utilized in the model for the equivariant cohomology and equivariant K-theory of $G/B$ by Kostant and Kumar \cite{kostant.kumar:nil-Hecke,kostant.kumar:KT}; in their work, the left action is also present, via equivariant localization. The right operators may also be defined as push-pull operators $\pi_i^* (\pi_i)_*$, where $P_i$ is a minimal parabolic subgroup and $\pi_i: G/B \to G/P_i$ is the natural projection. Same definition extends equivariantly, and for more general flag manifolds \cite{kashiwara.shimozono:affine}. A different generalization, involving Demazure-Lusztig operators, is discussed below.

The left divided difference operators appeared in Brion's paper \cite{brion:eq-chow}, for the 
equivariant cohomology $H^*_T(G/B)$, and about the same time in Peterson's notes  \cite{peterson}, 
where they play a key role in the proof of the `quantum=affine' statement; see also \cite{lam.shimozono:quantum,kato:loop}. 
They were utilized to study the equivariant cohomology rings of flag manifolds, for instance, by providing recursions to calculate 
equivariant Schubert classes, their polynomial representatives, and Schubert structure constants; see e.g.~\cite{knutson:noncomplex,tymoczko:permutation,tymoczko:divided,IMN:double,ikeda.matsumura:pfaffian,tamvakis:double,lenart.zhong.etal:parabolic,goldin.knutson:schubert}. 

The left operators have two significant advantages: (1) they may be defined for an equivariant cohomology/K-theory ring
for {any `partial flag manifold'} $G/P$, and further, for any $G$-variety $X$; (2) they lift naturally to the quantum versions of these rings. The two 
properties may be traced to the fact that the left multiplication on $G/P$ by elements of $W$ 
induces a ring automorphism
of the appropriate equivariant quantum cohomology/K-theory of $G/P$. 

In this paper we study the left versions of the BGG, Demazure, and the Demazure-Lusztig (DL) operators
acting on $H^*_T(G/P)$ and $K_T(G/P)$ and their quantum versions.
They generate various degenerations of the Hecke algebra: the nil-Hecke algebra (for BGG operators), the $0$-Hecke algebra (for Demazure operators), and the degenerate, respectively 
the full Hecke algebra, for the DL operators. 

We show that via their geometric action, the left operators generate recursively the (appropriately deformed) Schubert classes for $G/P$, starting from the class of the point $1.P$. Equivalently, each of the equivariant rings $H^*_T(G/P)$,$K_T(G/P)$, $\QH^*(G/P)$, $\QK_T(G/P)$ is a cyclic module under the action of the appropriate Hecke algebra, compatible with a particular deformation of the Schubert basis. 

For (non-quantum) equivariant cohomology, and left BGG operators, this was proved 
in~\cite{brion:eq-chow,knutson:noncomplex}; see also \cite{peterson}.~This is also known 
more generally for an algebraic model of any (non-quantum) equivariant oriented 
cohomology theory of $G/P$, and for Schubert classes defined in that model, 
considered by Lenart, Zainoulline and Zhong in \cite{lenart.zhong.etal:parabolic}. 
In this note we utilize a different approach, geometric, and deformations of Schubert
classes such as the Chern-Schwartz-MacPherson classes, or motivic Chern classes 
of Schubert cells, which do not appear in {\em loc.~cit.} We expect that eventually 
there will be a `dictionary' from this note to \cite{lenart.zhong.etal:parabolic}, but 
this will be considered elsewhere.~Still, we added an Appendix where we showed, 
for an arbitrary $G$-variety $X$, that the left Demazure operators from this paper 
coincide with operators defined by Harada, Landweber and Sjamaar 
\cite{harada.landweber.sjamaar:divided}, and also equal to certain convolution 
operators. {The convolution approach is closely related to the setup 
from \cite{lenart.zhong.etal:parabolic}. 

The symmetry between 
the left and right actions becomes particularly vivid for $X=G/B$. First, the left/right 
Demazure operators correspond to left/right convolution operators. Further, in this case, 
the Atiyah-Hirzebruch map gives an isomorphism of $R(T)$-algebras 
$K_T(G/B) \simeq R(T) \otimes_{R(G)} R(T)$, where $R(-)$ is the representation ring. 
Under this isomorphism, the left/right actions of $W$ correspond to the actions of $W$ 
on the first/second factors, and the left/right Demazure operators are induced by 
actions of a single operator $\partial_i' \in End_{R(G)}(R(T))$ on the two factors. 
These facts seem well known to
experts, but we could not find a reference, and are also included in the Appendix.} 

Next we illustrate our results for the left DL operators. The cohomological left DL operators, acting on $H^*_T(G/P)$, are products of operators $\mathcal{T}_i^L$, indexed by simple reflections $s_i \in W$, and defined by:  
\[ \mathcal{T}_i^L:= -\delta_i+ s_i^L \/;\] see \S \ref{ss:cohDL}. This definition 
closely matches the one for the right operators, but utilizes the left Weyl group action $s_i^L$ by a simple reflection, and 
the left divided difference/BGG operator $\delta_i$. These operators satisfy the quadratic relations $(\mathcal{T}_i^L)^2 = id$, and the braid relations, thus giving a twisted representation of $W$ on $H^*_T(G/P)$ (sometimes referred as the action of the degenerate Hecke algebra). In particular, there are well defined operators $\mathcal{T}_w^L:=\mathcal{T}_{i_1}^L \cdot \ldots \cdot \mathcal{T}_{i_k}^L$ obtained from any reduced expression $w= s_{i_1} \cdot \ldots \cdot s_{i_k}$.

The right version of these operators, denoted by $\mathcal{T}_w^R$, was studied by Ginzburg \cite{ginzburg:methods} and algebraically, for $G=\mathrm{SL}_n$, by Lascoux, Leclerc and Thibon \cite{LLT:twisted}. More recently, it was shown in \cite{aluffi.mihalcea:eqcsm} that the right operators can be used to generate recursively the equivariant Chern-Schwartz-MacPherson (CSM) classes $\csm(X_w^\circ) \in H^*_T(G/B)$ (see \cite{macpherson:chern,ohmoto:eqcsm}) of Schubert cells $X_w^\circ \subset G/B$. For any simple reflection $s_i \in W$ and any $w \in W$, 
\begin{equation}\label{E:introrightDL} \mathcal{T}_i^R (\csm(X_w^\circ)) = \csm(X_{ws_i}^\circ) \quad \in H^*_T(G/B) \/. \end{equation} 
This formula holds for the complete flag variety $G/B$, since the right operators are only defined in this case. Our first main result is the analogue of \eqref{E:introrightDL} using the left operators $\mathcal{T}_i^L$, and which holds for any partial flag manifold $G/P$. Let $W_P$ is the subgroup of $W$ generated by reflections in $P$, and let $X_{wW_P}^{\circ}:=BwP/P\subset G/P$ be the Schubert cell in $G/P$. 
\begin{thm}[Theorem \ref{thm:csmleft}]\label{thm:main1} For any simple reflection $s_i \in W$ and any $w \in W$, the following holds:
\[ \mathcal{T}_i^L \csm(X_{wW_P}^{\circ}) = \csm(X_{s_iwW_P}^{\circ}) \in H_T^*(G/P).\]
\end{thm}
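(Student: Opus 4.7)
The plan is to reduce the statement from $G/P$ to the full flag variety $G/B$ via the natural projection $\pi\colon G/B \to G/P$, and then prove the $G/B$ analogue. For the reduction, two compatibilities suffice. \emph{First}, by functoriality of the MacPherson transformation under proper pushforward, $\pi_* \csm(X_w^\circ) = \csm(X_{wW_P}^\circ)$ for every $w \in W$: if $w$ is the minimum length representative of its coset, then $\pi|_{X_w^\circ}$ is an isomorphism of affine spaces; for general $w$, the map $\pi|_{X_w^\circ}$ is surjective onto $X_{wW_P}^\circ$ with affine fibers, so on the level of constructible functions $\pi_!\mathbf{1}_{X_w^\circ} = \mathbf{1}_{X_{wW_P}^\circ}$. \emph{Second}, $\pi_*$ intertwines $\mathcal{T}_i^L$: the left multiplication by a representative of $s_i$ is compatible with $\pi$, multiplication by $\alpha_i \in H^*_T(\mathrm{pt})$ commutes with $\pi_*$ by $H^*_T(\mathrm{pt})$-linearity, and $\alpha_i$ is a non-zero-divisor on $H^*_T(G/P)$, so the relation $\alpha_i\,\delta_i = \mathrm{id} - s_i^L$ can be pushed through. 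Granting the $G/B$ case, the desired identity follows from
\[
\mathcal{T}_i^L \csm(X_{wW_P}^\circ) = \mathcal{T}_i^L \pi_* \csm(X_w^\circ) = \pi_* \mathcal{T}_i^L \csm(X_w^\circ) = \pi_* \csm(X_{s_iw}^\circ) = \csm(X_{s_iwW_P}^\circ).
\]

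The substantive step is thus $\mathcal{T}_i^L \csm(X_w^\circ) = \csm(X_{s_iw}^\circ)$ in $H^*_T(G/B)$. I would approach this by equivariant localization. For a $T$-fixed point $v \in W$, one has $(s_i^L \xi)|_v = s_i \cdot \xi|_{s_iv}$ (with $s_i$ acting on the equivariant parameters, reflecting that $s_i^L$ is $T$-equivariant only up to conjugation), and $\delta_i$ is the corresponding finite difference. Closed formulas for $\csm(X_w^\circ)|_v$ are available from \cite{aluffi.mihalcea:eqcsm} or through Rimányi--Varchenko-type recursions, so matching both sides of the proposed identity at each fixed point reduces to a root-theoretic computation, uniform in $v$. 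A more conceptual alternative is to relate left and right DL operators via an involution on $G/B \cong K/T_{\bb R}$ coming from inversion on $K$, which exchanges the left and right Weyl actions and is expected to send $\csm(X_w^\circ)$ to a class of the form $\csm(X_{w^{-1}}^\circ)$; from such an identification the left identity would follow from the known right-operator identity \eqref{E:introrightDL}.

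The main obstacle is this $G/B$ step. Unlike the right DL operator, whose geometric proof in \cite{aluffi.mihalcea:eqcsm} uses proper push-pull along the algebraic fibration $\pi_i\colon G/B \to G/P_i$, the operator $\mathcal{T}_i^L$ involves the non-algebraic map $s_i^L$, so no direct algebraic push-pull interpretation is available on $G/B$. Both the localization and the involution approaches will require careful bookkeeping of the twist on $H^*_T(\mathrm{pt})$ inherent in the left $W$-action and of the precise sign and normalization conventions for $\delta_i$; the involution approach in addition needs a verification that CSM classes of Schubert cells transform in the expected way under the non-algebraic map on $K/T_{\bb R}$.
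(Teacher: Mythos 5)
Your reduction from $G/P$ to $G/B$ matches the paper's argument and is correct: the identity $\pi_*\csm(X_w^\circ)=\csm(X_{wW_P}^\circ)$ and the commutation $\pi_*\mathcal{T}_i^L=\mathcal{T}_i^L\pi_*$ (from $G$-equivariance of $\pi$ and $H_T^*(\mathrm{pt})$-linearity of $\pi_*$) give exactly the chain you wrote. Your observation that left DL operators have no algebraic push--pull interpretation on $G/B$, unlike the right ones, is also on point.

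However, the substantive step --- the identity $\mathcal{T}_i^L\csm(X_w^\circ)=\csm(X_{s_iw}^\circ)$ on $G/B$ --- is not proved, only sketched via two alternatives, and this is a genuine gap. The localization route is not carried out: the restrictions $\csm(X_w^\circ)|_v$ are sums over subwords of Bott--Samelson type, and matching both sides at every fixed point ``uniformly in $v$'' is precisely the hard part, not a deferred routine. The $K/T_{\bb R}$-inversion idea has a more basic problem: the map $kT_{\bb R}\mapsto k^{-1}T_{\bb R}$ is not well defined on $K/T_{\bb R}$ (replacing $k$ by $kt$ changes the putative image unless $ktk^{-1}\in T_{\bb R}$), and no substitute is offered, nor a verification of how CSM classes of Schubert cells would transform.

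The idea you are missing is to exploit the right-operator result \eqref{E:introrightDL} itself, i.e.\ $\csm(X_w^\circ)=\mathcal{T}^R_{w^{-1}}[X_{id}]$, together with the commutativity $\mathcal{T}^L_i\mathcal{T}^R_j=\mathcal{T}^R_j\mathcal{T}^L_i$ (immediate from commutativity of left and right $W$-actions). This collapses the $G/B$ case to the single base computation $\mathcal{T}_i^L[X_{id}]=\csm(X_{s_i}^\circ)$, which is a short direct check: from $\delta_i[X_{id}]=-[X_{s_i}]$ and $s_i^L[X_{id}]=[X_{id}]+\alpha_i[X_{s_i}]$ (Corollary~\ref{cor:action}) one gets $\mathcal{T}_i^L[X_{id}]=(1+\alpha_i)[X_{s_i}]+[X_{id}]$, which is exactly $\csm(X_{s_i}^\circ)$ for the one-dimensional Schubert variety $X_{s_i}\simeq\bbP^1$. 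Then
\[
\mathcal{T}_i^L\csm(X_w^\circ)=\mathcal{T}_i^L\,\mathcal{T}^R_{w^{-1}}[X_{id}]=\mathcal{T}^R_{w^{-1}}\,\mathcal{T}_i^L[X_{id}]=\mathcal{T}^R_{w^{-1}}\csm(X_{s_i}^\circ)=\csm(X_{s_iw}^\circ),
\]
the last equality by iterating \eqref{E:introrightDL}.
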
 
There are similar identities for the opposite Schubert cells, and for the Poincar{\'e} duals of CSM classes of Schubert cells - the Segre-MacPherson classes;~cf.~Theorem \ref{thm:csmleft}.

For $G=\mathrm{SL}_n$, the left operators $\mathcal{T}_i^L$ are implicitly used by Rim{\'a}nyi, Tarasov and Varchenko 
\cite{RTV:partial} to give recursions for the Maulik-Okounkov stable envelopes \cite{maulik2019quantum}. 
By results in \cite{rimanyi.varchenko:csm,AMSS:shadows} stable envelopes are equivalent to CSM classes. 
In fact, in our language, the `R-matrix recursion' for stable envelopes from \cite{RTV:partial}, 
is simply the left $W$-action on CSM classes; see \S \ref{sec:Rmat}. Outside type A, the definition
of left operators seems to be new.

All this may be upgraded from the degenerate to the full Hecke algebra, and from cohomology to the equivariant K-theory.
The left K-theoretic Demazure-Lusztig operator for a simple reflection $s_i \in W$ is defined by:
\[ \mathcal{T}_i^L := \frac{1+ y e^{-\alpha_i}}{1-e^{-\alpha_i}} s_i^L - \frac{1+y}{1-e^{-\alpha_i} } \/. \] These 
act on $K_T(G/P)[y]$, with $y$ a formal variable; see \S \ref{ss:leftDL}. Here $\alpha_i$ is a simple root, $e^{-\alpha_i}$ is 
a character in $K_T(pt)$, and $s_i^L$ is the (K-theoretic) left $W$ action induced by left multiplication by 
$s_i$. The left operators satisfy the quadratic relations $(\mathcal{T}_i^L + id)(\mathcal{T}_i^L + y) = 0$, and the 
braid relations for $W$, yielding well defined operators $\mathcal{T}_w^L$ for any $w \in W$. 
In type A, these operators are implicit in constructions from 
\cite{RTV:Kstable}, where the authors study the K-theoretic generalization
of stable envelopes. In general, this definition seems to be new, although it closely 
resembles the one of the right DL operators $\mathcal{T}_w^R$ defined in \cite{lusztig:eqK}, see also 
\cite{brubaker.bump.licata,lee.lenart.liu:whittaker}. 

For flag manifolds, the K-theoretic stable envelopes are equivalent to (equivariant) {\em motivic Chern classes}
of Schubert cells $MC_y(X_{w W_P}^\circ) \in K_T(G/P)[y]$ defined in \cite{brasselet.schurmann.yokura:hirzebruch}; see \cite{feher2021motivic,AMSS:motivic} and details below. It was proved in \cite{AMSS:motivic} that for $w \in W$ and $s_i$ a simple reflection, the following holds in $K_T(G/B)[y]$: \[ \mathcal{T}_i^R MC_y(X_w^\circ) = \begin{cases} MC_y(X_{ws_i}^\circ) & \textrm{if } ws_i > w \/; \\ -(1+y) MC_y(X_w^\circ) -yMC_y(X_{ws_i}^\circ) & \textrm{otherwise} \/. \end{cases} \]
{Again, this identity only holds for the complete flag variety $G/B$. Our second main result is parabolic analogue of this, using the K-theoretic left operators $\mathcal{T}_i^L$.
Let $W^P$ be the set of minimal length representatives for the cosets in $W/W_P$.
\begin{thm}[Theorem \ref{thm:LDLMCP}]
For any $w\in W^P$ and any simple reflection $s_i$, the following holds in $K_T(G/P)[y]$:
\[\mathcal{T}_i^L(MC_y(X_{wW_P}^\circ))=\begin{cases} (-y)^{\ell(s_iw)-\ell(s_iwW_P)}MC_y(X_{s_i wW_P}^\circ) &\textrm{if } s_iw>w;\\
-(1+y)MC_y(X_{wW_P}^\circ)-y MC_y(X_{s_iwW_P}^\circ) &\textit{otherwise}. \end{cases}\]
\end{thm}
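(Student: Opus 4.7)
The plan is to reduce the identity on $G/P$ to its counterpart on the full flag variety $G/B$, via the natural projection $\pi: G/B \to G/P$. Since left multiplication by a lift of $s_i \in W$ commutes with the right $P$-action, and the coefficients of $\mathcal{T}_i^L$ lie in $K_T(\mathrm{pt})[y]$, the operator $\mathcal{T}_i^L$ commutes with the $K_T(\mathrm{pt})$-linear pushforward $\pi_*: K_T(G/B)[y] \to K_T(G/P)[y]$. Hence, for $w \in W^P$,
\[
\mathcal{T}_i^L MC_y(X_{wW_P}^\circ) = \mathcal{T}_i^L \pi_*(MC_y(X_w^\circ)) = \pi_*(\mathcal{T}_i^L MC_y(X_w^\circ)),
\]
where the first equality uses the fact that $\pi$ restricts to an isomorphism $X_w^\circ \xrightarrow{\sim} X_{wW_P}^\circ$ when $w$ is the minimal representative, so $\pi_*(MC_y(X_w^\circ)) = MC_y(X_{wW_P}^\circ)$.

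The second key input is a pushforward formula for $MC_y$ of arbitrary Schubert cells. Writing $v = v'u$ with $v' \in W^P$ the minimal length representative of $vW_P$ and $u \in W_P$, the restriction $\pi: X_v^\circ \to X_{v'W_P}^\circ$ is a Zariski-locally trivial fibration with fiber isomorphic to a Schubert cell $\mathbb{A}^{\ell(u)}$ in $P/B$. Combining this with the Hirzebruch identity $\chi_y(\mathbb{A}^k) = (-y)^k$ and the functoriality of motivic Chern classes under such affine fibrations, I expect
\[
\pi_*(MC_y(X_v^\circ)) = (-y)^{\ell(v) - \ell(vW_P)}\, MC_y(X_{vW_P}^\circ) \quad \text{in } K_T(G/P)[y].
\]

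With these ingredients in hand, the proof would unfold by invoking the $G/B$ version of the formula (the case $P = B$, which is either a prior theorem in the paper or is derived directly from the known right-DL formula of \cite{AMSS:motivic} via an involution intertwining the left and right $W$-actions). If $s_iw > w$, then $\mathcal{T}_i^L MC_y(X_w^\circ) = MC_y(X_{s_iw}^\circ)$ in $K_T(G/B)[y]$, and the pushforward lemma applied to $v = s_iw$ gives precisely $(-y)^{\ell(s_iw) - \ell(s_iwW_P)}\, MC_y(X_{s_iwW_P}^\circ)$. If $s_iw < w$, a short root-theoretic argument shows that $w \in W^P$ together with $s_iw < w$ forces $w^{-1}\alpha_i < 0$, which rules out $w\alpha_j = \alpha_i$ for any simple root $\alpha_j$ of $W_P$, so in fact $s_iw \in W^P$ as well; in this case $\pi_*$ acts on $MC_y(X_{s_iw}^\circ)$ without introducing a $(-y)$ factor, and the expression $-(1+y)MC_y(X_{wW_P}^\circ) - y MC_y(X_{s_iwW_P}^\circ)$ emerges directly.

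The main obstacle is the pushforward identity for $MC_y$ of Schubert cells: both the verification of the fiber bundle structure of $\pi|_{X_v^\circ}$ and the careful treatment of $MC_y$ under a nontrivial affine fibration (as opposed to an isomorphism) need to be established in the equivariant setting. The commutativity of $\mathcal{T}_i^L$ with $\pi_*$ and the case-splitting above are comparatively formal once this lemma and the $G/B$ formula are in place.
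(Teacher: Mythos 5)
Your reduction from $G/P$ to $G/B$ via $\pi_*$ matches the paper's argument exactly: $\mathcal{T}_i^L$ commutes with $\pi_*$ because left multiplication commutes with $\pi$ and the coefficients are $K_T(pt)[y]$-valued, and the pushforward formula
\[
\pi_* MC_y(X_v^\circ) = (-y)^{\ell(v)-\ell(vW_P)} MC_y(X_{vW_P}^\circ)
\]
is precisely what the paper cites from \cite[Remark 5.5]{AMSS:motivic}. Your use of Lemma \ref{lem:minimal} in the case $s_iw<w$ (so that $s_iw\in W^P$ and no extra $(-y)$ factor appears) is also correct, as is your handling of both subcases of $s_iw>w$.

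The genuine gap is the $G/B$ case itself. You propose to derive it from the known right-DL formula of \cite{AMSS:motivic} ``via an involution intertwining the left and right $W$-actions.'' No such involution is invoked in the paper, and I do not think one conjugates $\mathcal{T}_i^L$ into $\mathcal{T}_i^R$ directly: the paper's only conjugation relation, $w_0^L\mathcal{T}_{\alpha_i}^Lw_0^L=\mathcal{T}_{-w_0\alpha_i}^{L,\vee}$, takes left operators to left dual operators, not right ones. The actual mechanism is different and is the crux of the argument: one first checks by direct localization computation that $\mathcal{T}_i^L(\mathcal{O}_{id})=(1+ye^{-\alpha_i})\mathcal{O}_{s_i}-(1+y+ye^{-\alpha_i})\mathcal{O}_{id}=MC_y(X_{s_i}^\circ)=\mathcal{T}_i^R(\mathcal{O}_{id})$, i.e.\ the left and right DL operators agree on the class of the point. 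Then one uses $MC_y(X_w^\circ)=\mathcal{T}_{w^{-1}}^R(\mathcal{O}_{id})$ together with commutativity $\mathcal{T}_i^L\mathcal{T}_{w^{-1}}^R=\mathcal{T}_{w^{-1}}^R\mathcal{T}_i^L$ (Lemma \ref{lemma:Kcomm}(a)) to write
\[
\mathcal{T}_i^L MC_y(X_w^\circ)=\mathcal{T}_{w^{-1}}^R\mathcal{T}_i^L(\mathcal{O}_{id})=\mathcal{T}_{w^{-1}}^R\mathcal{T}_i^R(\mathcal{O}_{id})=\mathcal{T}_{w^{-1}s_i}^R(\mathcal{O}_{id})=MC_y(X_{s_iw}^\circ)
\]
when $s_iw>w$; the case $s_iw<w$ then follows from the quadratic relation $(\mathcal{T}_i^L+1)(\mathcal{T}_i^L+y)=0$. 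Without the base-case calculation $\mathcal{T}_i^L(\mathcal{O}_{id})=\mathcal{T}_i^R(\mathcal{O}_{id})$ and the commutation, the $G/B$ identity is not established, and the rest of your argument has nothing to push forward.
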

}   
In addition, we prove similar results where motivic classes are replaced by their Poincar{\'e} duals (these are the Segre motivic classes), or where we replaced $\mathcal{T}_i^L$ by certain `dual' DL operators; {see Theorem \ref{thm:SMCR} and Theorem \ref{thm:SMCTL}}. 

Although we focus on the left operators, the right operators play a central role in our proofs. We illustrate this by giving the basic idea of proving Theorem \ref{thm:main1} above. Let $\pi:G/B \to G/P$ be the projection, and $\pi_*: H^*_T(G/B) \to H^*_T(G/P)$ the induced map. By functoriality of CSM classes, and because $\pi$ is $G$-equivariant (thus $\pi_*$ commutes to $\mathcal{T}_i^L$), \[ \mathcal{T}_i^L \csm(X_{wW_P}^\circ) = \mathcal{T}_i^L \pi_* \csm(X_w^\circ) = \pi_*( \mathcal{T}_i^L(\csm(X_w^\circ))) \/. \] It remains to prove the $G/B$ case. In that case, by identity \eqref{E:introrightDL} for the right operators, $
\csm(X_w^\circ)= \mathcal{T}_{w^{-1}}^R[1.B]$, therefore, \[ \mathcal{T}_i^L(\csm(X_w^\circ)) = \mathcal{T}_i^L \mathcal{T}_{w^{-1}}^R [1.B] = \mathcal{T}_{w^{-1}}^R \mathcal{T}_i^L [1.B] \/, \] utilizing that left and right operators commute. The quantity $ \mathcal{T}_i^L [1.B] $ may be easily calculated directly in terms of CSM classes, and then one applies again \eqref{E:introrightDL} to obtain the desired identity.

Throughout the paper we tried to be as self contained as possible, and recall the basic facts needed about both left and right divided difference, Demazure, and Demazure-Lusztig operators. The survey of these formulae and properties of both the left and right operators may be of independent interest. 

We took the opportunity to show that the `classical' constructions of left divided difference and 
Demazure operators extend to the equivariant quantum cohomology ring $\QH^*_T(G/P)$ and the equivariant quantum K ring 
$\QK_T(G/P)$; see Propositions \ref{prop:quantumdelta} and \ref{prop:QKDem} below. 
This is clear if one regards both these rings as modules over the appropriate rings of (equivariant, quantum) parameters. 
The essential feature is that the left Weyl group multiplication induces automorphisms of the quantum rings, 
implying that the quantum extension satisfies a Leibniz rule involving the quantum product. 
We expect that this will be useful in finding `quantum' Schur and Schubert polynomials, representing Schubert classes in each ring. 
See examples \ref{ex:qhgr24} and \ref{ex:QKGr} below. Similar constructions, but from an algebraic point of view, were given 
by Ciocan-Fontanine \cite[App. J]{fulton.pragacz}, Kirillov and Maeno \cite{kirillov.maeno:quantum-schubert} and Lenart and Maeno \cite{lenart.maeno:quantum}.

{We mention that although we focus on the partial flag varieties $G/P$ in this note, the left Demazure operators $\delta_i$ and the left Demazure-Lusztig operators $\calT_i^L$ are well defined for the $T$-equivariant cohomology/K-theory of any complex variety with a $G$-action; in cohomology, this already appeared in Peterson's notes \cite{peterson}, see also \cite{harada.landweber.sjamaar:divided}. More details are given in the Appendix \ref{appendix}.

{\em Acknowledgements.} LM would like to thank A. Knutson for a conversation which sparked the definition of the quantum operators, and to T. Ikeda for useful comments and related collaborations. LM and CS would like to thank P. Aluffi, and J. Sch{\"u}rmann for related collaborations, and C. Zhong for useful comments. We thank the anonymous referees for useful suggestions and especially for precise pointers on how to expand the exposition on the symmetry between left and right actions and operators.

\section{Preliminaries}

\subsection{Equivariant cohomology} We recall some basic facts about the equivariant cohomology ring; see \cite{brion:eqint} for more details. 

Fix a smooth projective complex algebraic variety $X$ with a left action of a torus $T \simeq (\C^*)^r$. 
(Later $X=G/P$.) Throughout the paper we will use the equivariant Chow (co)homology theory developed in 
\cite{edidin.graham:eqchow}. There is an equivariant cycle map from the Chow to the ordinary (co)homology \cite[\S 2.8]{edidin.graham:eqchow}, and in the cases we will be interested in, this map is an isomorphism. Therefore
we follow the topological notation, although all our constructions are algebraic. 

We briefly recall the relevant definitions. Let $ET =(\C^\infty \setminus 0)^r$ be the universal $T$-bundle; 
it has a right $T$-action given by multiplication. Then the product $ET \times X$ 
has a right $T$-action given by $(e,y).t:= (et, t^{-1}y)$. The action is free, and the orbit space 
 $X_T:= (ET \times X)/T$ is called the Borel mixing space of $X$. By definition, 
 the equivariant cohomology ring  $H^*_T(X):= H^*(X_T)$. If $X=pt$, then $H^*_T(pt) = H^*((\bP^\infty)^r) = \C[t_1, \ldots , t_r ]$, where $t_i = c_1 (\cO_{(\bP^\infty)_i}(-1))$, the Chern class of the tautological line bundle pulled back from the $i$th component of $(\bP^\infty)^r$. The morphism $X_T \to BT:=ET/T$ gives $H^*_T(X)$ a structure of an algebra over $H^*_T(pt)$.

 Any $T$-stable subvariety $\Omega \subset X$ of complex codimension $k$ determines a class $[\Omega]_T \in H^{2k}_T(X)$. 
 If $Z$ is another smooth, projective variety and $f: X \to Z$ is a morphism, the push forward in homology and Poincar{\'e} duality determines a Gysin push-forward $f_*:H^i_T(X) \to H^{i+2(\dim Z -\dim X)}_T(Z)$, which is a $H^*_T(pt)$-module homomorphism. If $Z=pt$, we denote $f_*(a)$ by $\int_X a$. This gives the non-degenerate (intersection) pairing on $H^*_T(X)$ defined by 
\[ \langle a, b \rangle = \int_X a \cdot b  \quad \in H^*_T(pt) \/, \] for $a,b \in H^*_T(X)$. Here $\cdot$ denotes the intersection product in the equivariant Chow ring; under the cycle map, this corresponds to the cup product in the equivariant cohomology.

Let $\mathfrak{m}$ be the maximal ideal of $H^*_T(pt)$ generated by the equivariant parameters $t_i$. 
Since $H^*_T(pt)$ is a domain, $\mathfrak{m} \setminus 0$ is a multiplicative set, and we
define the {\em localized equivariant cohomology ring} $H^*_T(X)_{\loc}$ to be the localization 
$(H^*_T(X))_{\mathfrak{m} \setminus 0}$. We will identify $H^*_T(X)$ with a subring inside its localization.

Let $X^T= \{ x \in X: t.x = x ~\forall t \in T\}$ and assume that $X^T$ is finite.~By the localization theorem, see e.g.~\cite{brion:eqint}, 
$H^*_T(X)_{\loc}$ is a free $H^*_T(pt)_{\loc}$ module of rank equal to the number of fixed points in $X^T$. Further, a basis for
$H^*_T(X)_{\loc}$ is given by the fundamental classes $[x]_T$ of the fixed points $x \in X^T$. We call this the {\em fixed point basis}.
For each $x \in X^T$, let $i_x: \{ x \} \to X$ denote the inclusion. This is a $T$-equivariant proper morphism, and it 
induces the localization map $i_x^*: H^*_T(X) \to H^*_T(\{x\}) = H^*_T(pt)$. Then for any $\kappa \in H^*_T(X)$ we denote $\kappa|_x := i_x^*(\kappa)$. 
The restriction $i_x^*([x]_T)$ is the equivariant Euler class $e^T(T_x X)$, and it is an invertible element in $H^*_T(pt)_{\loc}$. In fact, if the $T$-module $T_x X$ is decomposed into weight spaces $T_x X = \oplus_i \C_{\lambda_i}$ then each weight $\lambda_i$ is a linear form in the equivariant parameters $t_i$ and $ e^T(T_x X) = \prod_i \lambda_i$. With this notation, any equivariant class $\kappa \in H^*_T(X)$ can be written uniquely as \begin{equation}\label{E:locexp} \kappa = \sum_{x \in X^T} \frac{\kappa|_x}{e^T(T_x X)} [x]_T  \quad \in H^*_T(X)_{\loc} \/. \end{equation}

\subsection{Preliminaries on flag manifolds}\label{sec:flags} Let $G$ be a complex, semisimple, Lie group, and fix 
$B,B^-$ a pair of opposite Borel subgroups. A maximal torus in $G$ is $T:= B \cap B^-$. Denote by 
$W:=N_G(T)/T$ the Weyl group, and by $\ell:W \to \mathbb{N}$ the associated length function. 
Denote also by $w_0$ the longest element in $W$; then $B^- = w_0 B w_0$. 

The datum $(G,B,T)$ determines a root system $R$ with positive roots $R^+$ and simple roots 
$\Delta := \{ \alpha_1, \ldots , \alpha_r \} \subset R^+$, such that the positive roots are in $B$. 
The simple reflection for the root $\alpha_i \in \Delta$ is denoted by $s_i$. Any subset $S$ of $\Delta$ determines a
parabolic subgroup $P:=P_S$ such that $B \subset P \subset G$ and $P/B$ corresponds to the simple roots in $S$. 
If $S = \{ \alpha_i \}$ then consists of a single element then $P_S$ is a minimal parabolic subgroup denoted by $P_i$.
If one fixes a parabolic group $P:=P_S$, then $W_P$ denotes the subgroup of $W$ generated by the simple reflections $s_i$ 
with $\alpha_i \in S$. Each element $w \in W$ has a unique decomposition $w = w_1 w_2$ where $w_2 \in W_P$ and 
$\ell(w) = \ell(w_1) + \ell(w_2)$. Then $w_1$ is the unique representative of minimal length for the coset $wW_P$ and 
one may define $\ell(wW_P) := \ell(w_1)$. We denote by $W^P$ the subset of $W$ containing all these minimal length 
representatives, and by $R_P^+$ the positive roots generated by the simple roots in $S$. 

Let $X:=G/B$ be the flag variety. This is a homogeneous space under the $G$ action. The $B$-orbits are the Schubert cells 
$X_w^\circ:= BwB/B \simeq \C^{\ell(w)}$, and the $B^-$-orbits are the opposite Schubert cells $X^{w,\circ}:=B^- w B/B$. 
The closures $X_w:= \overline{X_w^\circ}$ and $X^w:=\overline{X^{w,\circ}}$ are the Schubert varieties. With these definitions, 
$\dim_{\C} X_w = \mathrm{codim}_{\C} X^w = \ell(w)$. The Weyl group $W$ admits a partial ordering, called the Bruhat ordering, 
defined by $u \le v$ if and only if $X_u \subseteq X_v$. More generally, for any parabolic group $P$ one may consider the 
`partial flag manifold' $X^P:=G/P$. One defines in an analogous way the Schubert cells and the Schubert varieties: for $w \in W^P$, 
$X_{wW_P}^{\circ}:= BwP/P$ and $X_{wW_P} = \overline{BwP/P}$. Similarly $X^{wW_P,\circ} := B^- w P/P$, 
$X^{wW_P} := \overline{B^- w P/P}$ and then $\dim X_{wW_P} = \mathrm{codim}~X^{wW_P} = \ell(w)$. 
The Bruhat order from $W$ descends to one on its subset $W^P$. We often remove the $W_P$ from the notation 
if $w \in W^P$ and $P$ is understood from the context.

The $T$-fixed points of $X^P$ are in bijection with the elements of $W^P$; if $w \in W^P$ then we denote by $e_w$ the corresponding fixed point. 

\subsubsection{The Schubert basis and the fixed point basis} The equivariant cohomology ring $H^*_T(X^P)$ is a graded $H^*_T(pt)$-algebra with a $H^*_T(pt)$-basis given by the fundamental classes of the Schubert varieties:
\[ H^*_T(X^P) = \oplus_{w \in W^P} H^*_T(pt) [X_w] = \oplus_{w \in W^P} H^*_T(pt) [X^{w}] \/; \] 
(from here we remove $T$ from the notation $[\Omega]_T$, as all classes will be equivariant in this note). By the localization formula \eqref{E:locexp}, \[ [X^{w}] = \sum \frac{[X^{w}]|_{v}}{e(T_v(X^P))}[e_v] ~ \in H^*_T(X^P)_{\loc}\/, \] where the sum is over $v \in W^P$ such that $v \ge w$. As a $T$-module, the tangent space $T_v(X^P)$ has a weight decomposition, \[ T_v (X^P) = \bigoplus_{\alpha \in R^+ \setminus R_P^+} \C_{v(-\alpha)} \/, \] therefore the Euler class $e(T_v(X^P)) = \prod_{\alpha \in R^+ \setminus R_P^+} v(-\alpha)$.

Finally, let $\pi:G/B\rightarrow G/P$ be the natural projection. Then for any $w\in W$,
\begin{equation}\label{equ:pushXw}
\pi_*[X_{w}]=\begin{cases} [X_{wW_P}] & \textrm{ if } w\in W^P \/; \\ 0 & \textrm{ otherwise} \/. \end{cases}
\end{equation}

\section{Left and right Weyl group actions and operators}\label{ss:leftrightW} 
\subsection{Left and right Weyl group actions}\label{sec:twoWactions} In this section we follow mainly \cite{knutson:noncomplex}. The equivariant cohomology of the flag manifold $G/B$ admits a left and a right Weyl group action. The left action is given by the left multiplication on~$G/B$:~if $w \in W$, choose a representative $n_w \in N_G(T)$, and define the automorphism \[ \Phi_w: G/B \to G/B ; \quad xB \mapsto n_w x B \/. \] This is not an equivariant map, but it {\em is} equivariant with respect to the map $\varphi_w: T \to T$ sending $t \mapsto n_w t n_{w}^{-1}$. Therefore $\Phi_w$ induces a ring automorphism $w^L: H^*_T(G/B) \to H^*_T(G/B)$ defined by $w^L(a) = \Phi_w^*(a)$.\begin{footnote}{Note that non-equivariantly, this action is simply the identity.}\end{footnote} (It is easy to see that $w^L$ does not depend on the choice of the representative $n_w$.) If $\kappa \in H^*_T(pt)$ then $w^L(\kappa \cdot a) = w(\kappa) \cdot w^L(a)$, where $w(\kappa)$ is calculated from the natural action of $W$ on $H^*_T(pt) = \C[Lie(T)]$. Since the push-forward to a point is an equivariant morphism, it follows that for any $w \in W$ and $\eta_1,\eta_2\in H_T^*(X)$, we have
\[\langle w^L(\eta_1),w^L(\eta_2)\rangle=w.\langle\eta_1,\eta_2\rangle.\] Observe also that the left Weyl group action may be used to relate the $B$ and $B^-$ Schubert classes: if $w_0$ is the longest element in $W$, then from $\Phi_{w_0}^{-1} (X_w)= X^{w_0w}$ we deduce that \begin{equation}\label{E:w0B} w_0^L.[X_w] = [X^{w_0w}] \/. \end{equation}

The right $W$-action is induced by the right multiplication on $G/B \simeq K/T_{{\mathbb R}}$, where $K$ is the maximal compact subgroup of $G$, and $T_\bbR=K\cap B$. For any $w\in W$, let $w^R$ denote the automorphism of $H^*_T(G/B)$ induced by the right multiplication by (a representative of) $w^{-1}$ on $K/T_{{\mathbb R}}$. Since the right multiplication commutes with the (left) torus action, $w^R$ is a $H_T(pt)$-algebra automorphism. 

In terms of localizations, the automorphisms $w^R$ and $w^L$ are defined in the following way, cf.~ \cite{knutson:noncomplex}: for any $\eta\in H_T^*(G/B)$ and $u,w \in W$,
\[w^L(\eta)|_u=w(\eta|_{w^{-1}u})\quad \textit{ and } \quad w^R(\eta)|_u=\eta|_{uw}.\]
Notice that we have $w^Lu^L=(wu)^L$ and $w^Ru^R=(wu)^R$. 
We also record the following easy lemma.
\begin{lemma}\label{lemma:cohaction} (a) Let $w \in W$ and a simple reflection $s_i \in W$. Then \[ s_i^L [e_w] = [e_{s_i w}] \/; \quad s_i^R [e_w] = - [e_{ws_i}] \/, \] as classes in $H^*_T(X)$. 

(b) For any $u,v \in W$ we have $u^L v^R = v^R u^L$ as automorphisms of $H_T(X)$. 
\end{lemma}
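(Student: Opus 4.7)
\medskip

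\noindent\textbf{Proof plan.} My plan is to reduce everything to the localization formula \eqref{E:locexp} and the explicit characterizations of $w^L$ and $w^R$ in terms of fixed-point restrictions recalled just above the lemma, namely
\[ w^L(\eta)|_u = w(\eta|_{w^{-1}u}), \qquad w^R(\eta)|_u = \eta|_{uw}. \]
Since the localization map $H^*_T(G/B) \to \bigoplus_{u \in W} H^*_T(pt)$ is injective (fixed points are isolated and $G/B$ is smooth), verifying an identity between equivariant classes amounts to verifying it after restriction to every $T$-fixed point. I will use that $[e_w]|_u = 0$ for $u \neq w$ and $[e_w]|_w = e^T(T_w(G/B)) = \prod_{\alpha > 0} w(-\alpha)$.

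For part (a), the case of $s_i^L$ is essentially immediate: the identity $(s_i^L[e_w])|_u = s_i([e_w]|_{s_i u})$ is nonzero only when $s_i u = w$, i.e.\ $u = s_i w$, and in that case it equals $s_i\!\left(\prod_{\alpha>0} w(-\alpha)\right) = \prod_{\alpha>0} s_i w(-\alpha) = e^T(T_{s_iw}(G/B))$, matching $[e_{s_i w}]$. The case of $s_i^R$ needs one more observation: $(s_i^R[e_w])|_u = [e_w]|_{u s_i}$ is nonzero only when $u = ws_i$, giving $\prod_{\alpha>0} w(-\alpha)$. On the other hand $[e_{ws_i}]|_{ws_i} = \prod_{\alpha>0} ws_i(-\alpha)$. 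The main computational point (and only real step) is that $s_i$ permutes $R^+$ while sending $\alpha_i \mapsto -\alpha_i$, so
\[ \prod_{\alpha>0} ws_i(-\alpha) \;=\; w(\alpha_i) \prod_{\substack{\alpha>0\\ \alpha \neq \alpha_i}} w(-\alpha) \;=\; -\prod_{\alpha>0} w(-\alpha), \]
which yields the required sign and gives $s_i^R[e_w] = -[e_{w s_i}]$.

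For part (b), I again compare localizations at each fixed point $x$. Using the two displayed formulas above in succession,
\[ (u^L v^R \eta)|_x \;=\; u\bigl((v^R\eta)|_{u^{-1}x}\bigr) \;=\; u(\eta|_{u^{-1}xv}), \]
and symmetrically
\[ (v^R u^L \eta)|_x \;=\; (u^L\eta)|_{xv} \;=\; u(\eta|_{u^{-1}xv}). \]
Both expressions agree, so $u^L v^R = v^R u^L$ by injectivity of localization. I expect no real obstacle; the only step that requires attention is the sign computation in part (a), which is the combinatorial content distinguishing the left and right actions on the fixed-point basis.
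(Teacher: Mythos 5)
Your proof is correct and follows essentially the same route as the paper: both parts are checked by restricting to $T$-fixed points and using the localization characterizations $w^L(\eta)|_u = w(\eta|_{w^{-1}u})$ and $w^R(\eta)|_u = \eta|_{uw}$. You spell out the sign computation $e(T_wX) = -e(T_{ws_i}X)$ in more detail than the paper (which simply invokes it), and for part (b) you verify commutativity via the same fixed-point formulas rather than citing the geometric fact that left and right multiplication on $G/B$ commute, but these are cosmetic differences, not a different argument.
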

\begin{proof} Let $X:=G/B$. We prove the claim by localization. For any $u\in W$,
\[s_i^L [e_w]|_u=s_i([e_w]|_{s_iu})=\delta_{w,s_iu}s_i(e(T_{w}X))=\delta_{s_iw,u}e(T_{s_iw}X)=[e_{s_i w}]|_u,\]
where $e(T_{w}X)=\prod_{\alpha>0}(-w\alpha)$ is the equvariant Euler class of the tangent fiber $T_wX$ at the fixed point $e_w$. This implies the first equality. Similarly, 
\[s_i^R [e_w]|_u=[e_w]|_{us_i}=\delta_{w,us_i}e(T_wX)=-\delta_{ws_i,u}e(T_{ws_i}X) = - [e_{ws_i}]|_u,\]
where we utilized that $e(T_wX)=-e(T_{ws_i}X)$. This finishes the proof of (a). Part (b) follows because the left and right multiplications on $G/B$ commute.\end{proof}

\subsection{Divided difference operators}\label{sec:cohdivdiff} One may utilize the left and right actions to define divided difference operators on $H^*_T(X)$. 

We start with the right divided difference operators, defined in \cite{BGG}, and utilized for a long time 
to study Schubert classes. 
Let $\alpha_i$ be any simple root, and let $\pi_i:G/B\rightarrow G/P_i$ be the projection. 
Then the {\em right} BGG operator 
is $\partial_i:=\pi_i^*\pi_{i*}$. In terms of localizations, it is given by \[(\partial_i\eta)|_v=\frac{\eta|_v-(s_i^R\eta)|_v}{-v\alpha_i},\]
where $v\in W$ and $\eta\in H_T^*(G/B)$. The operator $\partial_i$ is $H^*_T(pt)$-linear, and it satisfies $\partial_i^2 = 0$ and the usual 
braid relations. In particular, for any $w \in W$, there is a well defined operator $\partial_w$ acting on 
$H^*_T(G/B)$. It follows from the definition that the operator $\partial_i$ acts on Schubert classes by: 
\[ \partial_i [X_w] = \begin{cases} [X_{ws_i}] & \textrm{if } ws_i > w \/; \\ 0 & \textrm{otherwise} \/; \end{cases} \quad 
\partial_i [X^{w}] = \begin{cases} [X^{ws_i}] & \textrm{if } ws_i < w \/; \\ 0 & \textrm{otherwise} \/. \end{cases}\] 

The left divided difference operator, acting on $H^*_T(G/B)$, is defined by 
\begin{equation}\label{E:leftcohdiv} \delta_i = \frac{1}{\alpha_i}(id - s_i^L) \/, \end{equation} and it played a key role in the study of equivariant Schubert classes. These operators satisfy $\delta_i^2=0$ and the braid relations. In terms of Schubert classes
\begin{equation}\label{E:delschub} \delta_i [X^{w}] = \begin{cases} [X^{s_iw}] & \textrm{if } s_iw < w \/; \\ 0 & \textrm{otherwise} \/; \end{cases} \quad 
\delta_i [X_w] = \begin{cases} -[X_{s_iw}] & \textrm{if } s_i w > w \/; \\ 0 & \textrm{otherwise} \/. \end{cases}\end{equation}
The first equality is proved in \cite[\S 6.4]{brion:eq-chow} and in \cite[Prop.~2]{knutson:noncomplex}, and the second equality follows from the first by 
utilizing equation \eqref{E:w0B} and the fact that $w_0^L \delta_{\alpha_i} w_0^L = - \delta_{-w_0(\alpha_i)}$; observe that 
$w_0(\alpha_i)$ is a negative simple root. (To emphasize the dependence on simple root $\alpha_i$, we temporarily used 
the notation $\delta_{\alpha_i}$ instead of $\delta_i$.) With a different sign normalization, the formulae \eqref{E:delschub} 
also appeared in Peterson's notes \cite{peterson}. As we shall see below, one advantage of utilizing $\delta_i$ is that 
it descends to an operator on the equivariant cohomology of the partial flag manifold $G/P$; this is not true for $\partial_i$. 
 
We record some properties of the two operators; see \cite{knutson:noncomplex}.

\begin{lemma}\label{lemma:delcomm} (a) The left and right operators satisfy the following commutation relations, for any $i,j$:
\[ \delta_i s_j^R = s_j^R \delta_i \/; \quad \partial_i s_j^L = s_j^L \partial_i \/; \quad \delta_i \partial_j = \partial_j \delta_i \/. \]

(b) (Leibniz rule) For any $a,b \in H^*_T(G/B)$, \[ \delta_i(a \cdot b) = \delta_i(a) \cdot b + s_i^L(a) \delta_i(b) \/. \] In particular, for any $\lambda \in H^*_T(pt)$, $\delta_i(\lambda \cdot b) = \langle \lambda, \alpha_i^\vee \rangle b + s_i(\lambda) \delta_i(b)$. 

(c) For any $\kappa \in H^*_G(G/B)$, 
$\delta_i(\kappa \cdot a) = \kappa \delta_i(a)$.
\end{lemma}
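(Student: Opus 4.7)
The plan is to derive all three parts from two basic ingredients: the commutation $s_i^L s_j^R = s_j^R s_i^L$ of the left and right Weyl group actions (Lemma \ref{lemma:cohaction}(b)), and the fact that $s_i^L$ is a ring automorphism of $H^*_T(G/B)$ whose restriction to $H^*_T(pt)$ agrees with the natural reflection $s_i$.

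For part (a), the identity $\delta_i s_j^R = s_j^R \delta_i$ is immediate from the definition $\delta_i = \alpha_i^{-1}(\mathrm{id} - s_i^L)$ combined with $H^*_T(pt)$-linearity of $s_j^R$ (since right multiplication is $T$-equivariant) and the commutation with $s_i^L$ from Lemma \ref{lemma:cohaction}(b). For $\partial_i s_j^L = s_j^L \partial_i$, I would use that the projection $\pi_i: G/B \to G/P_i$ is $G$-equivariant, so both $\pi_i^*$ and $\pi_{i*}$ commute with the left $W$-action on the respective cohomology rings, hence so does their composition $\partial_i$. The remaining identity $\delta_i \partial_j = \partial_j \delta_i$ can be verified by a short localization computation: using $(\delta_i\eta)|_v = \alpha_i^{-1}(\eta|_v - s_i(\eta|_{s_iv}))$ and $(\partial_j\eta)|_v = (-v\alpha_j)^{-1}(\eta|_v - \eta|_{vs_j})$, both $(\delta_i\partial_j\eta)|_v$ and $(\partial_j\delta_i\eta)|_v$ expand to the symmetric expression $[\eta|_v - \eta|_{vs_j} - s_i(\eta|_{s_iv}) + s_i(\eta|_{s_ivs_j})]/(\alpha_i\cdot(-v\alpha_j))$, where one uses that $s_i$ transports $-s_iv\alpha_j$ to $-v\alpha_j$.

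Part (b) is a direct consequence of $s_i^L$ being a ring homomorphism: the identity $ab - s_i^L(a)s_i^L(b) = (a - s_i^L(a))b + s_i^L(a)(b - s_i^L(b))$, divided by $\alpha_i$, is exactly the claimed Leibniz rule. The special case for $\lambda \in H^*_T(pt)$ of degree two then follows from $\lambda - s_i(\lambda) = \langle \lambda, \alpha_i^\vee\rangle \alpha_i$, which gives $\delta_i(\lambda) = \langle \lambda, \alpha_i^\vee\rangle$.

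For part (c), I would invoke the standard identification of $H^*_G(G/B)$ with the subring of left-$W$-invariants inside $H^*_T(G/B)$: since the automorphisms $\Phi_w$ are pullbacks of the $G$-equivariant left multiplications on the Borel mixing space of $G/B$, any $G$-equivariant class is fixed by $s_i^L$. Therefore $\delta_i(\kappa) = 0$ for $\kappa \in H^*_G(G/B)$, and the Leibniz rule from (b) yields $\delta_i(\kappa \cdot a) = 0\cdot a + \kappa \cdot \delta_i(a) = \kappa\, \delta_i(a)$. The only step requiring genuine calculation is the localization verification in (a); every other claim is a formal consequence of the Leibniz-type behavior of $\delta_i$ and the interplay between left and right $W$-actions recorded earlier.
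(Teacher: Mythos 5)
Your proposal is correct and fills in exactly the details the paper elides with ``immediate calculations.'' Part (a) is handled by the same two observations the paper relies on (commutation of the left and right $W$-actions, plus $G$-equivariance of $\pi_i$), and parts (b) and (c) are the formal consequences of $s_i^L$ being a ring automorphism that restricts to $s_i$ on $H^*_T(pt)$ and fixes $G$-equivariant classes. One small simplification worth noting: the third identity in (a) needs no separate localization computation, since it follows at once from the second identity together with $H^*_T(pt)_{\mathrm{loc}}$-linearity of $\partial_j$, namely $\partial_j\delta_i = \alpha_i^{-1}\partial_j(\mathrm{id}-s_i^L) = \alpha_i^{-1}(\partial_j - s_i^L\partial_j) = \delta_i\partial_j$; your localization check is correct but does more work than necessary. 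Also, as you implicitly note, the displayed ``in particular'' formula in (b) is meant for degree-two $\lambda$, since the pairing $\langle\lambda,\alpha_i^\vee\rangle$ only makes sense for a weight.
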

\begin{proof} Part (a) follows from the fact that the left and right Weyl group actions commute. 
Parts (b) and (c) follow from immediate calculations.\end{proof}
For later use, we record the following corollary of the above Lemma.
\begin{cor}\label{cor:action}
For any simple root $\alpha_i$ and $w\in W$, the following holds in $H_T^*(G/B)$:
 \[ s^L_i ([X_w]) = \begin{cases} [X_{ w}]+\alpha_i [X_{s_iw}] & \textrm{if } s_iw > w \/; \\ [X_w] & \textrm{otherwise} \/. \end{cases} \]
\end{cor}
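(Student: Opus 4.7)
The plan is to deduce this directly from the definition of the left divided difference operator together with the formula \eqref{E:delschub} for its action on the Schubert basis $\{[X_w]\}$.

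Recall from \eqref{E:leftcohdiv} that $\delta_i=\tfrac{1}{\alpha_i}(\mathrm{id}-s_i^L)$, which rearranges to
\[
s_i^L=\mathrm{id}-\alpha_i\,\delta_i
\]
as operators on $H_T^*(G/B)$. Apply this identity to the class $[X_w]$:
\[
s_i^L([X_w])=[X_w]-\alpha_i\,\delta_i([X_w]).
\]
Now plug in the second case of \eqref{E:delschub}: if $s_iw>w$, then $\delta_i([X_w])=-[X_{s_iw}]$, so $s_i^L([X_w])=[X_w]+\alpha_i[X_{s_iw}]$; otherwise $\delta_i([X_w])=0$ and hence $s_i^L([X_w])=[X_w]$.

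There is no real obstacle here — this is simply a rewriting of \eqref{E:leftcohdiv} combined with the already-established action of $\delta_i$ on the Schubert basis. The only thing to double-check is that the sign in \eqref{E:delschub} is indeed the one giving the stated $+\alpha_i[X_{s_iw}]$ (as opposed to $-\alpha_i[X_{s_iw}]$), which it does.
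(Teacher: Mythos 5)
Your proof is correct, and it is more direct than the paper's. You simply rearrange the definition \eqref{E:leftcohdiv} to $s_i^L = \mathrm{id}-\alpha_i\,\delta_i$ and then apply the full statement of \eqref{E:delschub} for the $B$-Schubert basis, which gives the result in one line. The paper instead writes $[X_w]=\partial_{w^{-1}}[X_{id}]$, invokes the commutation $s_i^L\partial_{w^{-1}}=\partial_{w^{-1}}s_i^L$ from Lemma~\ref{lemma:delcomm}(a), reduces to the $w=\mathrm{id}$ case (where $s_i^L[X_{id}]=[X_{id}]+\alpha_i[X_{s_i}]$, essentially the $w=\mathrm{id}$ specialization of your argument), and then evaluates $\partial_{w^{-1}}([X_{id}]+\alpha_i[X_{s_i}])$ using the product rule for composing right BGG operators. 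Your route avoids that last bookkeeping step and uses \eqref{E:delschub} for general $w$; the paper's route uses \eqref{E:delschub} only at $w=\mathrm{id}$ and propagates via left/right commutation, presumably to showcase the interplay of the two operator families that is a recurring theme in the paper. Both are valid and non-circular, since \eqref{E:delschub} is established (citing Brion and Knutson, plus the $w_0^L$ conjugation trick) before the Corollary and independently of it.
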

\begin{proof}
By Lemma \ref{lemma:delcomm}, we have
\[ s^L_i ([X_w])=s^L_i \partial_{w^{-1}}([X_{id}])  =\partial_{w^{-1}} s^L_i ([X_{id}]) =\partial_{w^{-1}} ([X_{id}]+\alpha_i [X_{s_i}]) \/.\] 
The last expression equals the right hand side of the claim.\end{proof}

\subsection{Cohomological Demazure-Lusztig operators}\label{ss:cohDL} In this section we recall the definition of the right cohomological Demazure-Lusztig (DL), 
and we define a left version of these operators. The right DL operators appeared in \cite{ginzburg:methods} in relation to degenerate Hecke algebras and in \cite{aluffi.mihalcea:eqcsm,AMSS:shadows} in the study of Chern-Schwartz-MacPherson classes. 

Fix $\alpha_i$ a simple root. The right DL operator, and the dual right DL operator are defined by 
\[ \mathcal{T}_i^R:= \partial_i - s_i^R \/; \quad \mathcal{T}_i^{R,\vee} = \partial_i + s_i^R \/. \] 
Both operators are $H^*_T(pt)$-linear, satisfy the braid relations and the quadratic relation 
$(\mathcal{T}_i^R)^2 = (\mathcal{T}_i^{R,\vee})^2 = id$. Thus they provide a twisted $W$-action on $H^*_T(G/B)$; see also \cite{LLT:twisted}.~The braid relations imply that there well defined operators 
$T_w^R, T_w^{R,\vee}$ defined as usual using reduced word decompositions. The duality refers to the fact that they are adjoint under the 
Poincar{\'e} pairing (see ~\cite{AMSS:shadows}): for any $a,b \in H^*_T(G/B)$, 
\begin{equation}\label{E:radj} \langle \mathcal{T}_i^R a, b \rangle = \langle a, \mathcal{T}_i^{R,\vee} b \rangle \/. \end{equation}

The {left} DL operators, and the dual left operators are defined by \[ \mathcal{T}_i^L:=-\delta_i + s_i^L \/; \quad \mathcal{T}_i^{L,\vee}:=\delta_i + s_i^L \/. \] It is easy to check that they satisfy the same braid and quadratic relations as the right DL operators. However, the left operators are not $H^*_T(pt)$-linear. We record next few properties of these operators, which follow easily from the properties of the left and right divided difference operators; we leave the details of the proofs to the reader.
\begin{lemma}\label{lemma:DLcomm} The following hold: 

(a) The left and right operators commute, i.e. \[ \mathcal{T}_i^L \mathcal{T}_j^R = \mathcal{T}_j^R \mathcal{T}_i^L; \quad \mathcal{T}_i^{L,\vee} \mathcal{T}_j^{R,\vee} = \mathcal{T}_j^{R,\vee} \mathcal{T}_i^{L,\vee} \/. \]

(b) For any $a,b\in H_T^*(G/B)$, $\langle \mathcal{T}_i^{L} (a),b \rangle=s_i .\langle a,\mathcal{T}_i^{L,\vee} (b)\rangle$.

(c) The left operator and its dual are related by $w_0^L\mathcal{T}_{\alpha_i}^{L}w_0^L=\mathcal{T}_{-w_0\alpha_i}^{L,\vee}$.
\end{lemma}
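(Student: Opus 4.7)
The plan is to prove each of the three parts by direct manipulation of the operator definitions $\mathcal{T}_i^L=-\delta_i+s_i^L$, $\mathcal{T}_i^{L,\vee}=\delta_i+s_i^L$, $\mathcal{T}_j^R=\partial_j-s_j^R$, and $\mathcal{T}_j^{R,\vee}=\partial_j+s_j^R$, leveraging only the commutation relations and pairing identities already established in Lemmas \ref{lemma:cohaction} and \ref{lemma:delcomm}.

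For part (a), I would expand $\mathcal{T}_i^L\mathcal{T}_j^R=(-\delta_i+s_i^L)(\partial_j-s_j^R)$ into four summands, and do the same for $\mathcal{T}_j^R\mathcal{T}_i^L$. Each summand then matches up using one of the four commutations between the pair $\{\delta_i,s_i^L\}$ and the pair $\{\partial_j,s_j^R\}$. The three commutations $\delta_i\partial_j=\partial_j\delta_i$, $\delta_is_j^R=s_j^R\delta_i$, and $s_i^L\partial_j=\partial_j s_i^L$ are supplied by Lemma \ref{lemma:delcomm}(a), while $s_i^Ls_j^R=s_j^Rs_i^L$ comes from Lemma \ref{lemma:cohaction}(b). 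The identical argument, replacing $-s_j^R$ by $+s_j^R$ and $-\delta_i$ by $+\delta_i$, handles the dual statement $\mathcal{T}_i^{L,\vee}\mathcal{T}_j^{R,\vee}=\mathcal{T}_j^{R,\vee}\mathcal{T}_i^{L,\vee}$.

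For part (b), I would expand both sides and separate the $\delta_i$- and $s_i^L$-contributions. The main input is the compatibility $\langle w^L(\eta_1),w^L(\eta_2)\rangle=w.\langle\eta_1,\eta_2\rangle$ recorded in \S\ref{sec:twoWactions}, which instantly yields $\langle s_i^La,b\rangle=s_i\langle a,s_i^Lb\rangle$, matching the $s_i^L$-contribution of the LHS with the $s_i^L$-contribution of the twisted RHS. For the remaining divided difference terms, one computes $\alpha_i\langle\delta_ia,b\rangle=\langle a,b\rangle-\langle s_i^La,b\rangle$ directly from $\alpha_i\delta_i=\mathrm{id}-s_i^L$, combined again with the compatibility of $\int_X$ with the left $W$-action. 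The desired identity then follows by matching the $\alpha_i^{-1}$-contributions, using that $s_i(1/\alpha_i)=-1/\alpha_i$.

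For part (c), the key is to compute $w_0^L\circ\delta_{\alpha_i}\circ w_0^L$ and $w_0^L\circ s_i^L\circ w_0^L$ separately. Since $w_0^L$ is an involution acting on $H^*_T(pt)$ as $w_0$, applying $w_0^L$ on either side of $\delta_{\alpha_i}=\frac{1}{\alpha_i}(\mathrm{id}-s_i^L)$ and using $w_0(1/\alpha_i)=1/w_0(\alpha_i)=-1/\alpha_{i^*}$, where $\alpha_{i^*}:=-w_0\alpha_i$ is a simple root, gives $w_0^L\delta_{\alpha_i}w_0^L=-\delta_{-w_0\alpha_i}$. Similarly, $w_0^Ls_i^Lw_0^L=(w_0s_iw_0)^L=s_{-w_0\alpha_i}^L$, since the reflection associated to the root $-w_0\alpha_i$ is $w_0s_iw_0$. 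Combining the two computations, $w_0^L\mathcal{T}_{\alpha_i}^Lw_0^L=-(-\delta_{-w_0\alpha_i})+s_{-w_0\alpha_i}^L=\delta_{-w_0\alpha_i}+s_{-w_0\alpha_i}^L=\mathcal{T}_{-w_0\alpha_i}^{L,\vee}$.

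The main obstacle is part (b): because the left operators are not $H^*_T(pt)$-linear, the natural adjoint relation under the Poincar{\'e} pairing picks up a twist by the $W$-action on scalars, and bookkeeping of the signs and of where exactly $s_i$ is applied (inside vs.\ outside the pairing, on $a$ vs.\ on $b$) requires some care. Parts (a) and (c) are formal once the commutation relations and the $W$-equivariance of the scalar ring are in hand.
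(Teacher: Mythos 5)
Parts (a) and (c) of your proposal are correct, and they are essentially the only reasonable routes: (a) is a four-term expansion using the commutation relations from Lemmas \ref{lemma:cohaction}(b) and \ref{lemma:delcomm}(a), and (c) is the direct conjugation computation, using $w_0^L\delta_{\alpha_i}w_0^L=-\delta_{-w_0\alpha_i}$ (already recorded in \S\ref{sec:cohdivdiff}) together with $w_0^Ls_i^Lw_0^L=s_{-w_0\alpha_i}^L$. The paper leaves the details to the reader, so there is no competing argument to compare against.

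For part (b), however, your claim that ``the desired identity then follows by matching the $\alpha_i^{-1}$-contributions'' does \emph{not} hold. Carrying out exactly the expansion you describe, the $s_i^L$-contributions match via $\langle s_i^L a,b\rangle=s_i.\langle a,s_i^L b\rangle$, but the $\delta_i$-contributions do \emph{not}: one finds
\[
-\langle\delta_i a,b\rangle \;=\; -\frac{\langle a,b\rangle-\langle s_i^L a,b\rangle}{\alpha_i},
\qquad
s_i.\langle a,\delta_i b\rangle \;=\; -\frac{s_i.\langle a,b\rangle-\langle s_i^L a,b\rangle}{\alpha_i},
\]
and the two differ by $-\frac{1}{\alpha_i}\bigl(\langle a,b\rangle - s_i.\langle a,b\rangle\bigr)$, i.e. by $-\partial_i^{\mathrm{pt}}\langle a,b\rangle$ where $\partial_i^{\mathrm{pt}}$ is the BGG operator on $H^*_T(pt)$. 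Equivalently, the cleanest route is to push forward the Leibniz rule \ref{lemma:delcomm}(b) and use $s_i^L\delta_i=\delta_i$; this yields the exact identity
\[
\langle\mathcal{T}_i^L a,b\rangle \;=\; s_i.\langle a,\mathcal{T}_i^{L,\vee} b\rangle \;-\; \partial_i^{\mathrm{pt}}\langle a,b\rangle,
\]
which reduces to Lemma \ref{lemma:DLcomm}(b) \emph{only} when $\langle a,b\rangle$ is $s_i$-invariant. The leftover term is genuinely nonzero in general: for $G=\mathrm{SL}_2$, $X=\mathbb{P}^1$, and $a=b=[e_{id}]$, one has $\langle a,b\rangle=-\alpha$, $\langle\mathcal{T}_i^L a,b\rangle=1$, but $s_i.\langle a,\mathcal{T}_i^{L,\vee}b\rangle=-1$. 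So a proof ``by matching'' cannot succeed for arbitrary $a,b$, and your write-up should either restrict the hypothesis (e.g.\ to $a,b$ with $W$-invariant pairing, which is the only case used later --- in the proof of Theorem \ref{thm:csmleft} the pairing $\langle\csm,\ssm\rangle$ is a constant $\delta_{u,w}$), or record the extra $\partial_i^{\mathrm{pt}}$-term. As written, the step ``then follows by matching'' silently discards a nonvanishing correction.
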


\subsection{Parabolic case}\label{sec:cohparabolic} Unlike the right versions of the BGG and DL operators, the left versions are well defined in the equivariant cohomology of any $G/P$, with the same definitions as for $G/B$. Consider the projection $\pi:G/B \to G/P$. The morphism $\pi$ is $G$-equivariant, and both the (Gysin) push-forward $\pi_*:H^*_T(G/B) \to H^*_T(G/P)$ and the pull back $\pi^*:H^*_T(G/P) \to H^*_T(G/B)$ are $H^*_T(pt)$-linear and commute with the left action $w^L$: for any $a \in H^*_T(G/B)$ and $b \in H^*_T(G/P)$,  \[ \pi_*(w^L.a) = w^L. \pi_*(a) \/; \quad \pi^*(w^L.b) = w^L. \pi^*(b) \/. \]
Further, $\pi^*:H^*_T(G/P) \to H^*_T(G/B)$ is an injective ring homomorphism, so the second equation may also be taken as the definition of $w^L$. All the left operators $\delta_i$, $\mathcal{T}_i^L$ and $\mathcal{T}_i^{L, \vee}$ commute with $\pi_*$ and $\pi^*$, and that the properties of the left BGG operators, and left DL operators and their duals, as stated in Lemmas \ref{lemma:cohaction},\ref{lemma:delcomm} and \ref{lemma:DLcomm}, hold in $H^*_T(G/P)$, with the same proofs. 

The action on Schubert classes is similar: if $w \in W^P$, and $s_i$ any simple reflection, then \[ \delta_i [X_{wW_P}] = \delta_i \pi_*[X_w] = \pi_* \delta_i [X_w] \/. \] By equations \eqref{E:delschub} and \eqref{equ:pushXw},
\begin{equation}\label{E:delPschub}\delta_i[X_{wW_P}]=\begin{cases} -[X_{s_iwW_P}] & \textrm{if } s_iw > w \textrm{ and } s_iw\in W^P \/; \\ 0 & \textrm{otherwise} \/. \end{cases}\end{equation} 
The same idea, but using $\pi^*$, may be utilized to calculate the action of $\delta_i$ on opposite Schubert classes $[X^{wW_P}]$:  
\begin{equation*}\delta_i[X^{wW_P}]=\begin{cases} [X^{s_iwW_P}] & \textrm{ if } s_iw < w \/; \\ 0 & \textrm{ otherwise} \/. \end{cases}\end{equation*} Note that if $s_i w < w$ and $w \in W^P$, then automatically $s_iw \in W^P$.




\section{Demazure-Lusztig operators and Chern-Schwartz-MacPherson classes}
In this section we prove that the Chern-Schwartz-MacPherson (CSM) and the Segre-MacPherson (SM) classes of Schubert cells  can be obtained recursively using the left DL operators. This explains the ``R-matrix recursion" from \cite{rimanyi.varchenko:csm}. The version of this statement using the right operators was proved in \cite{aluffi.mihalcea:eqcsm} and \cite{AMSS:shadows}. In particular, this clarifies the fact that in fact there are {\em two} recursions for the CSM classes in $G/B$ (for the left and for the right DL operators). 
\subsection{Chern-Schwartz-MacPherson classes} We start by recalling the definition of the CSM and SM classes. For now let $X$ be any complex algebraic variety. 
The (additive) group $\calF(X)$ of constructible functions consists of finite combinations $\sum c_W \one_W$ where the coefficients $c_W \in \Z$, the (finite) sum is over constructible subsets $W \subset X$, and $\one_W$ is the indicator function. If $f: X \to Y$ is a proper morphism, then there exists a push forward $f_*: \calF(X) \to \calF(Y)$ defined by $f_*(\one_W)(y):=\chi(f^{-1}(y) \cap W)$, where $\chi$ denotes the topological Euler characteristic with compact support.

According to a conjecture attributed to Deligne and Grothendieck, there is a unique transformation 
$c_*: \calF(X) \to H_*(X)$ from the functor $\calF$ of constructible functions on a complex algebraic variety $X$ to 
homology of $X$, which commutes with proper morphisms $f:X \to Y$, and satisfying the normalization property that if $X$ 
is smooth then $c_*(\one_X)=c(TX)\cap [X]$. This conjecture was proved by MacPherson \cite{macpherson:chern}.
~The class $c_*(\one_X)$ for possibly singular $X$ 
was shown to coincide with a class defined earlier by M.-H.~Schwartz \cite{schwartz:1, schwartz:2}. 
For any constructible subset $W\subset X$, the class $\csm(W):=c_*(\one_W)\in H_*(X)$ is the 
\textit{Chern-Schwartz-MacPherson} (CSM) class of $W$ in $X$. If $X$ is smooth, we let 
\[ \ssm(W):=\frac{c_*(\one_W)}{c(TX)} \] denote the \textit{Segre-MacPherson} (SM) class of $W$, 
see \cite{AMSS:ssmpos}. Observe that $c(TX) = 1 + \kappa$ where $\kappa \in H^*(X)$ is nilpotent, thus it makes sense to divide by $c(TX)$.
The theory of CSM classes was extended to the equivariant setting by Ohmoto \cite{ohmoto:eqcsm}. In 
the equivariant case, the SM class needs to be considered in a suitable completion of the equivariant ring $H^*_T(X)$.


\subsection{CSM and SM classes of Schubert cells} We now take $X=G/P$ and consider the equivariant CSM and SM classes of Schubert cells. Part (a) of the following theorem was proved in \cite{aluffi.mihalcea:eqcsm}, and part (b) in \cite{AMSS:shadows}:  
\begin{thm}\label{thm:csm} 
(a) Assume that $X=G/B$, and let $s_i \in W$ be a simple reflection, and $w \in W$. Then
\[ \mathcal{T}_i^R \csm(X_w^\circ) = \csm(X_{ws_i}^\circ).\]
In particular, $\csm(X_w^{\circ})=T_{w^{-1}}^R[X_{id}]$.

(b) For arbitrary $X=G/P$, the SM classes are Poincar{\'e} dual to CSM classes, i.e. for any $u,w\in W^P$,
\[\langle\csm(X_{wW_P}^{\circ}),\ssm(X^{uW_P,\circ})\rangle=\delta_{w,u}.\]
\end{thm}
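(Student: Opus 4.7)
My plan for part (a) is to exploit the factorization $\mathcal{T}_i^R = \partial_i - s_i^R$ together with the identity $\partial_i = \pi_i^* \pi_{i,*}$ for the projection $\pi_i \colon G/B \to G/P_i$, and to apply MacPherson's functoriality. Functoriality gives $\pi_{i,*} \csm(X_w^\circ) = c_*(\pi_{i,*} \one_{X_w^\circ})$, and the pushforward of constructible functions is computed fiberwise by Euler characteristic: depending on whether $ws_i > w$ or $ws_i < w$, the restriction $\pi_i|_{X_w^\circ}$ is either an isomorphism onto $\pi_i(X_w^\circ) \subset G/P_i$, or an $\mathbb{A}^1$-fibration over $\pi_i(X_{ws_i}^\circ)$. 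In either case this yields an explicit formula for $\pi_{i,*}\one_{X_w^\circ}$ as an indicator function on $G/P_i$. Pulling back by $\pi_i^*$ and combining with $-s_i^R$, which swaps the cells $X_w^\circ$ and $X_{ws_i}^\circ$ up to Chern class corrections coming from the relative tangent bundle of $\pi_i$, I expect a short case check to deliver $\mathcal{T}_i^R \csm(X_w^\circ) = \csm(X_{ws_i}^\circ)$. The companion identity $\csm(X_w^\circ) = \mathcal{T}_{w^{-1}}^R [X_{id}]$ then follows by induction on $\ell(w)$, starting from the base case $\csm(X_{id}^\circ) = [X_{id}]$ since $X_{id}^\circ = \{e_{id}\}$ is a reduced $T$-fixed point.

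For part (b), my plan is to first establish, in the case $X=G/B$, an analogous recursion $\ssm(X^{u,\circ}) = \mathcal{T}_{u^{-1}}^{R,\vee}[X^{id}]$ using the dual operator $\mathcal{T}_i^{R,\vee} = \partial_i + s_i^R$ and the same functorial argument as in (a), but now tracking the normalization by $c(T_{G/B})$ that converts CSM classes to SM classes. With both recursions in hand, the adjointness \eqref{E:radj} reduces the pairing $\langle \csm(X_w^\circ), \ssm(X^{u,\circ})\rangle$ to $\langle [X_{id}], \mathcal{T}_w^R \mathcal{T}_{u^{-1}}^{R,\vee}[X^{id}]\rangle$; propagating the (dual) DL operators through until one lands on a product of ordinary Schubert classes, and using the classical duality $\langle [X_w], [X^u]\rangle = \delta_{w,u}$, delivers the $G/B$ case. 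The parabolic generalization to $G/P$ is then obtained via push-pull along $\pi \colon G/B \to G/P$: by MacPherson functoriality, $\pi_*\csm(X_w^\circ) = \csm(X_{wW_P}^\circ)$ for $w \in W^P$ (since $\pi$ is an isomorphism from $X_w^\circ$ onto its image), and an analogous statement for SM classes holds up to a correction by $c(T_\pi)$ for the relative tangent bundle.

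The main obstacle I foresee is the parabolic step in part (b): ensuring that the Chern class normalization in the SM classes cooperates with push-pull along $\pi$. Because $T_{G/B}$ and $\pi^* T_{G/P}$ differ by the relative tangent bundle $T_\pi$, dividing by the total Chern class does not commute with $\pi_*$, and one must track carefully how the factor $c(T_\pi)$ is absorbed into the Poincar\'e pairing after push-pull. An alternative route that avoids this bookkeeping is equivariant localization at $T$-fixed points: both CSM and SM classes admit explicit localization formulas, and the orthogonality then reduces to a sum-over-fixed-points identity, which can be established inductively from the recursions in (a) and its dual.
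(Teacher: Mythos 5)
The paper does not actually prove Theorem~\ref{thm:csm}; it cites \cite{aluffi.mihalcea:eqcsm} for part (a) and \cite{AMSS:shadows} for part (b). So there is no in-paper argument to compare against, and your proposal must be assessed on its own merits. Your overall strategy — push-pull through $\pi_i$, fiberwise Euler characteristics, Verdier--Riemann--Roch — is the right one, but there are two concrete gaps.

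For part (a) the crucial difficulty is that $s_i^R$ is \emph{not} an algebraic morphism of $G/B$ (it is defined via $K/T_{\mathbb R}$), so MacPherson functoriality and VRR give you no purchase on $s_i^R\csm(X_w^\circ)$ directly. Writing ``$-s_i^R$ swaps the cells up to Chern class corrections'' is not a computation, and in fact $s_i^R$ does not act by a swap on CSM classes. The step you are missing is the purely algebraic rewriting $\mathcal{T}_i^R = c(T_{\pi_i})\cdot\partial_i - \mathrm{id}$, which follows from the localization formula $(s_i^R\eta)|_v = \eta|_{vs_i}$ and eliminates $s_i^R$ entirely. Once you have this, your VRR computation gives $\partial_i\csm(X_w^\circ) = c(T_{\pi_i})^{-1}\bigl(\csm(X_w^\circ)+\csm(X_{ws_i}^\circ)\bigr)$, and then $\mathcal{T}_i^R\csm(X_w^\circ) = \bigl(\csm(X_w^\circ)+\csm(X_{ws_i}^\circ)\bigr) - \csm(X_w^\circ) = \csm(X_{ws_i}^\circ)$ at once.

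For part (b) you have correctly identified that pushing the SM normalization through $\pi\colon G/B\to G/P$ needs care, but you stop short of resolving it, and your formulation ``up to a correction by $c(T_\pi)$'' is wrong in a helpful direction: the correction cancels. From VRR, $\pi^*\csm(\Omega') = c(T_\pi)^{-1}\csm(\pi^{-1}\Omega')$, and since $c(T_{G/B}) = c(T_\pi)\cdot\pi^*c(T_{G/P})$, one gets $\pi^*\ssm(\Omega') = \ssm(\pi^{-1}\Omega')$ with \emph{no} correction — exactly parallel to Proposition~\ref{prop:gensegrepb} for Segre motivic classes, whose proof could serve as a template. You should also watch a potential circularity: you want to use the recursion $\mathcal{T}_i^{R,\vee}\ssm(X^{u,\circ}) = \ssm(X^{us_i,\circ})$ as an input, but the paper obtains it (Corollary~\ref{cor:Tissm}) as a consequence of duality. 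It can be established independently by showing division by $c(T_X)$ intertwines $\mathcal{T}_i^R$ and $\mathcal{T}_i^{R,\vee}$ (a short localization check using $c(T_vX)(1+v\alpha_i) = c(T_{vs_i}X)(1-v\alpha_i)$), but this must be said explicitly, or else the argument as sketched is circular. With these two repairs, your plan is workable, and the localization alternative you raise at the end is also a viable fallback.
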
 
One may consider the Schubert expansion of the CSM and SM classes: \[ \csm(X_w^\circ) = \sum a_{w,v} [X_v]; \quad \ssm(X^{w,\circ})= \sum b_{w,v} [X^v] \/. \]
The transition matrices $(a_{w,v})$ and $(b_{w,v})$ have been studied in \cite{aluffi.mihalcea:eqcsm,AMSS:shadows,AMSS:ssmpos}. In the non-equivariant case, it was proved in \cite{AMSS:shadows} that $a_{w,v} \ge 0$ and in \cite{AMSS:ssmpos} that $(-1)^{\ell(w)- \ell(v)} b_{w,v} \ge 0 $. In both situations, and for the equivariant classes, the transition matrices are triangular with respect to the partial Bruhat order, and the coefficients on the diagonal are all non-zero. The latter statement follows from the localization formulae of the classes $\csm(X_w^\circ)$ and $\ssm(X^{w,\circ})$ at the fixed point $w$, proved in \cite[Prop.~6.5]{aluffi.mihalcea:eqcsm}. It follows that both $\{ \csm(X_w^\circ) \} $ and $\{ \ssm(X^{w,\circ}) \}$ are $H^*_T(pt)_{\loc}$-bases over the localized equivariant cohomology ring $H^*_T(G/P)_{\loc}$. Formulas for the structure constants of the multiplication in the CSM or SM basis of Schubert cells can be found in \cite{su2019structure}, generalizing the results from \cite{goldin.knutson:schubert}. From this we obtain the following result, implicit in \cite{AMSS:shadows}.

\begin{cor}\label{cor:Tissm} Assume that $X=G/B$, let $s_i \in W$ be a simple reflection, and let $w \in W$. Then \[ \mathcal{T}_i^{R,\vee} \ssm(X^{w,\circ}) = \ssm(X^{ws_i, \circ})\quad \textrm{ and } \quad \mathcal{T}_i^{R,\vee} \ssm(X_{w}^{\circ}) = \ssm(X_{ws_i}^{\circ}) \/.\]
\end{cor}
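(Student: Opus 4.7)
My plan is to derive both identities as formal consequences of the adjointness \eqref{E:radj}, the action of $\mathcal{T}_i^R$ on CSM classes from Theorem \ref{thm:csm}(a), and the Poincar\'e orthogonality between CSM and SM classes from Theorem \ref{thm:csm}(b). The key observation is that the SM classes form a dual basis to the CSM classes under the intersection pairing, so any endomorphism of the SM classes may be detected by pairing against the CSM basis.

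For the identity $\mathcal{T}_i^{R,\vee}\ssm(X^{w,\circ}) = \ssm(X^{ws_i,\circ})$, I would pair both sides with an arbitrary basis element $\csm(X_v^\circ)$. Using \eqref{E:radj} followed by Theorem \ref{thm:csm}(a),
\[\langle\csm(X_v^\circ),\mathcal{T}_i^{R,\vee}\ssm(X^{w,\circ})\rangle = \langle\mathcal{T}_i^R\csm(X_v^\circ),\ssm(X^{w,\circ})\rangle = \langle\csm(X_{vs_i}^\circ),\ssm(X^{w,\circ})\rangle = \delta_{vs_i,w} = \delta_{v,ws_i},\]
which matches $\langle\csm(X_v^\circ),\ssm(X^{ws_i,\circ})\rangle = \delta_{v,ws_i}$. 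Since $\{\csm(X_v^\circ)\}$ is an $H^*_T(pt)_{loc}$-basis of $H^*_T(G/B)_{loc}$ and the pairing is non-degenerate, the identity follows.

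For the second identity $\mathcal{T}_i^{R,\vee}\ssm(X_w^\circ) = \ssm(X_{ws_i}^\circ)$, I need two extra inputs: the action $\mathcal{T}_i^R\csm(X^{v,\circ}) = \csm(X^{vs_i,\circ})$ on opposite CSM classes, and the dual orthogonality $\langle\csm(X^{v,\circ}),\ssm(X_w^\circ)\rangle = \delta_{v,w}$. Both are obtained by conjugating Theorem \ref{thm:csm} by the longest-element automorphism $w_0^L$. Functoriality of CSM classes together with \eqref{E:w0B} yields $w_0^L\csm(X_u^\circ) = \csm(X^{w_0u,\circ})$, and the analogous equality for SM classes follows once one knows that $c(T_{G/B})$ is $w_0^L$-invariant. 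Since $\mathcal{T}_i^R$ is built from right operators, it commutes with every $s_j^L$ by Lemma \ref{lemma:cohaction}(b) and hence with $w_0^L$, giving the first input; and the compatibility $\langle w_0^L a, w_0^L b\rangle = w_0.\langle a,b\rangle$ of the pairing with $w_0^L$ transports Theorem \ref{thm:csm}(b) to its dual version. The pairing calculation above, now run against the basis $\{\csm(X^{v,\circ})\}$, then concludes.

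The only delicate ingredient is the $w_0^L$-invariance of $c(T_{G/B})$, which I would verify by equivariant localization at $T$-fixed points, where $c(T_{G/B})|_v = \prod_{\alpha>0}(1-v\alpha)$ and the identity reduces to the fact that $w_0$ permutes the set of positive roots. Everything else is formal bookkeeping from the adjunction, the base case, and Poincar\'e duality.
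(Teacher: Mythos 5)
Your argument for the first identity is exactly the paper's: pair against the basis $\{\csm(X_v^\circ)\}$, move $\mathcal{T}_i^{R,\vee}$ across the pairing via \eqref{E:radj}, apply Theorem~\ref{thm:csm}(a), and conclude from Theorem~\ref{thm:csm}(b) and non-degeneracy. For the second identity the paper only says ``proved in a similar way,'' and your elaboration via conjugation by $w_0^L$ is a correct reconstruction of that step. Two small points are worth tightening.

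First, to commute $\mathcal{T}_i^R$ past $w_0^L$ you invoke Lemma~\ref{lemma:cohaction}(b), but that lemma only handles the $s_i^R$ summand of $\mathcal{T}_i^R = \partial_i - s_i^R$; you also need $\partial_i s_j^L = s_j^L \partial_i$, which is Lemma~\ref{lemma:delcomm}(a). Second, your localization check of $w_0^L c(T_{G/B}) = c(T_{G/B})$ does go through, but it is not ``the fact that $w_0$ permutes the positive roots'' --- $w_0$ sends $R^+$ to $R^-$, not to itself. The computation
\[
w_0^L c(T_{G/B})|_v = w_0\Bigl(\prod_{\alpha>0}\bigl(1-(w_0^{-1}v)\alpha\bigr)\Bigr) = \prod_{\alpha>0}(1-v\alpha) = c(T_{G/B})|_v
\]
is purely formal from the definition of $w^L$ in terms of localizations and uses nothing about how $w_0$ acts on $R^+$. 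The paper's shortcut is cleaner and worth internalizing: $T_{G/B}$ is a $G$-equivariant bundle, so $c(T_{G/B})$ lies in the image of $H^*_G(G/B) \to H^*_T(G/B)$, which is pointwise fixed by every $w^L$.
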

\begin{proof} Using Theorem \ref{thm:csm} and equation \eqref{E:radj} we obtain that for any $u \in W$,
\[ \begin{split} \langle \csm(X_u^\circ), \mathcal{T}_i^{R,\vee} \ssm(X^{w,\circ}) \rangle & = \langle \mathcal{T}_i^R \csm(X_u^\circ), \ssm(X^{w,\circ}) \rangle = \langle \csm(X_{us_i}^\circ), \ssm(X^{w,\circ}) \rangle \\ & = \langle \csm(X_{u}^\circ), \ssm(X^{ws_i,\circ}) \rangle \/. \end{split} \] Then the first identity follows from the fact that the Poincar{\'e} pairing is non-degenerate. The second is proved in a similar way.\end{proof}

The main result of this section is the following theorem.
\begin{thm}\label{thm:csmleft} Let $X=G/P$, let $s_i \in W$ be a simple reflection, and let $w \in W$. Then the following hold:
\[ \mathcal{T}_i^L \csm(X_{wW_P}^{\circ}) = \csm(X_{s_iwW_P}^{\circ}) \/; \quad  \mathcal{T}_i^{L,\vee} \ssm(X^{wW_P,\circ}) = \ssm(X^{s_iw W_P,\circ}),\]
and 
\[ \mathcal{T}_i^{L,\vee}\csm(X^{wW_P,\circ}) = \csm(X^{s_iw W_P,\circ}) \/; \quad \mathcal{T}_i^L\ssm(X_{wW_P}^{\circ}) = \ssm(X_{s_iwW_P}^{\circ}) \/.\]
\end{thm}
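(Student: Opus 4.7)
The first identity $\mathcal{T}_i^L\csm(X_{wW_P}^\circ) = \csm(X_{s_iwW_P}^\circ)$ is the main content; the other three follow from it by formal duality arguments. Following the sketch in the introduction, I would use the $G$-equivariant projection $\pi: G/B \to G/P$. The restriction $\pi|_{X_w^\circ}: X_w^\circ \to X_{wW_P}^\circ$ is an affine fibration with affine cell fibers of dimension $\ell(w) - \ell(wW_P)$, hence of Euler characteristic one. MacPherson's functoriality therefore gives $\pi_*\csm(X_w^\circ) = \csm(X_{wW_P}^\circ)$ for every $w \in W$ (whether or not $w \in W^P$). Since both $\delta_i$ and $s_i^L$ commute with $\pi_*$ by the $G$-equivariance of $\pi$ (see \S\ref{sec:cohparabolic}), so does $\mathcal{T}_i^L$, and it suffices to prove $\mathcal{T}_i^L \csm(X_w^\circ) = \csm(X_{s_iw}^\circ)$ in $H^*_T(G/B)$ and then push it forward.

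For the $G/B$ case, I would invoke Theorem \ref{thm:csm}(a), which writes $\csm(X_w^\circ) = \mathcal{T}_{w^{-1}}^R[X_{id}]$, and then apply the commutation of left and right DL operators (Lemma \ref{lemma:DLcomm}(a)):
\[ \mathcal{T}_i^L\csm(X_w^\circ) = \mathcal{T}_{w^{-1}}^R\bigl(\mathcal{T}_i^L[X_{id}]\bigr). \]
The crux is the base-case identity $\mathcal{T}_i^L[X_{id}] = \mathcal{T}_i^R[X_{id}]$. Since $\partial_i[X_{id}] = [X_{s_i}] = -\delta_i[X_{id}]$ by the formulas in \S\ref{sec:cohdivdiff}, this reduces to $s_i^L[X_{id}] = -s_i^R[X_{id}]$. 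By Corollary \ref{cor:action} the left-hand side equals $[X_{id}] + \alpha_i[X_{s_i}]$, whereas Lemma \ref{lemma:cohaction}(a) gives $s_i^R[X_{id}] = -[e_{s_i}]$; the required equality $[e_{s_i}] = [X_{id}] + \alpha_i[X_{s_i}]$ then follows from a short equivariant localization check at $e_{id}$ and $e_{s_i}$. Combining these ingredients and using that the $\mathcal{T}_w^R$ furnish a genuine $W$-action,
\[ \mathcal{T}_i^L\csm(X_w^\circ) = \mathcal{T}_{w^{-1}}^R\mathcal{T}_i^R[X_{id}] = \mathcal{T}_{w^{-1}s_i}^R[X_{id}] = \csm(X_{s_iw}^\circ), \]
by a final application of Theorem \ref{thm:csm}(a).

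The three remaining identities follow by standard dualities. Since $c(T(G/P))$ is $G$-invariant, Lemma \ref{lemma:delcomm}(c) together with the triviality of $s_i^L$ on $H^*_G(G/P)$ implies that both $\mathcal{T}_i^L$ and $\mathcal{T}_i^{L,\vee}$ are $H^*_G(G/P)$-linear, and hence commute with multiplication by $c(T(G/P))^{\pm 1}$. Dividing the first identity by $c(T(G/P))$ yields the fourth (the SM version for $B$-Schubert cells). For the second identity, for any $v \in W^P$ I would pair with $\csm(X_{vW_P}^\circ)$ and use the adjoint relation Lemma \ref{lemma:DLcomm}(b) together with the Poincar{\'e} duality of Theorem \ref{thm:csm}(b):
\[ \langle \csm(X_{vW_P}^\circ),\mathcal{T}_i^{L,\vee}\ssm(X^{wW_P,\circ})\rangle = s_i.\langle \mathcal{T}_i^L\csm(X_{vW_P}^\circ),\ssm(X^{wW_P,\circ})\rangle = \delta_{s_iv,w} = \delta_{v,s_iw}, \]
which coincides with $\langle\csm(X_{vW_P}^\circ),\ssm(X^{s_iwW_P,\circ})\rangle$. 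Since the $\csm$ classes form an $H^*_T(pt)_{\loc}$-basis of $H^*_T(G/P)_{\loc}$, the second identity follows. The third identity is then obtained from the second by multiplying through by $c(T(G/P))$, precisely as the fourth was obtained from the first.

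The main obstacle I anticipate is the base-case equality $\mathcal{T}_i^L[X_{id}] = \mathcal{T}_i^R[X_{id}]$, which boils down to the localization identity $[e_{s_i}] = [X_{id}] + \alpha_i[X_{s_i}]$ in $H^*_T(G/B)$. Everything else is an essentially formal combination of MacPherson's functoriality, the commutation of the left and right Weyl group actions, and the right-operator version of the result already established in \cite{aluffi.mihalcea:eqcsm, AMSS:shadows}.
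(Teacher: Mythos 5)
Your proof is correct and follows essentially the same architecture as the paper's: reduce from $G/P$ to $G/B$ via $\pi_*$ using MacPherson functoriality, then in $G/B$ exploit the commutation of left and right DL operators together with the right-operator result $\csm(X_w^\circ) = \mathcal{T}_{w^{-1}}^R[X_{id}]$. Two small variations are worth noting. For the base case, the paper simply evaluates $\mathcal{T}_i^L[X_{id}] = (1+\alpha_i)[X_{s_i}] + [X_{id}]$ and recognizes this as $\csm(X_{s_i}^\circ)$; you instead reduce to the cleaner algebraic identity $\mathcal{T}_i^L[X_{id}] = \mathcal{T}_i^R[X_{id}]$, which amounts (after cancelling the BGG pieces) to $s_i^L[X_{id}] = -s_i^R[X_{id}]$, i.e.\ to the localization check $[e_{s_i}] = [X_{id}] + \alpha_i[X_{s_i}]$ — a nice rephrasing, and equivalent in content. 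For the third and fourth identities, the paper conjugates by $w_0^L$ (using $w_0^L\mathcal{T}_{\alpha_i}^{L}w_0^L = \mathcal{T}_{-w_0\alpha_i}^{L,\vee}$ and $w_0^L\csm(X_w^\circ) = \csm(X^{w_0w,\circ})$), whereas you multiply or divide by $c(T_{G/P})$ and invoke $H^*_G(G/P)$-linearity of $\mathcal{T}_i^L$ and $\mathcal{T}_i^{L,\vee}$. Both are short and correct; the $w_0^L$ argument avoids working in the completion where $c(T_{G/P})^{-1}$ lives, while your argument avoids tracking the $w_0$-conjugation of simple roots, so they buy roughly the same thing.
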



\begin{proof} Since $X_w^{\circ} = w_0. X^{w_0 w W_P, \circ}$, and by functoriality of CSM classes, it follows that $w_0^L\csm(X_w^{P,\circ})=\csm(X^{w_0w W_P, \circ})$. Further, $w_0^L\ssm(X_w^{\circ})=\ssm(X^{w_0wW_P,\circ})$, because $w_0^L c(T_{G/P}) = c(T_{G/P})$, since the tangent bundle $T_{G/P}$ is $G$-equivariant. From this and the fact that $\mathcal{T}_{\alpha_i}^{L,\vee} = w_0^L \mathcal{T}_{-w_0\alpha_i}^L w_0^L$ (by Lemma \ref{lemma:DLcomm}(c)), we deduce that the second row of equalities follows from those in the first row.

We now prove the first identity in the first row. First consider the situation when $P=B$. If $w=id$, then 
\[ \begin{split} \mathcal{T}_i^L[X_{id}] & = (-\delta_i + s_i^L)[X_{id}] 
\\ &= (1+\alpha_i)[X_{s_i}]+[X_{id}] =\csm(X_{s_i}^\circ) \/,\end{split} \] 
where the second equality follows from the equation \eqref{E:delschub} and Corollary \ref{cor:action}. For the general case, we apply Lemma \ref{lemma:DLcomm}, to obtain
\[ \begin{split} \mathcal{T}_i^L \csm(X_w^\circ) & = \mathcal{T}_i^L T_{w^{-1}}^R [X_{id}] = T_{w^{-1}}^R \mathcal{T}_i^L [X_{id}] \\ & = T_{w^{-1}}^R \csm(X_{s_i}^\circ) = \csm(X_{s_i w}^\circ)\/. \end{split} \] 
We now allow $P$ to be arbitrary, and recall that $\pi: G/B \to G/P$ is the projection. By functoriality of CSM classes $\pi_* \csm(X_w^{\circ}) = \csm(X_{wW_P}^{\circ})$ {(see e.g.~\cite[Prop.~3.5]{aluffi.mihalcea:eqcsm})}. Since $\pi_*$ and $\mathcal{T}_i^L$ commute, 
\begin{align*}
\mathcal{T}_i^L(\csm(X_{wW_P}^\circ))=&\mathcal{T}_i^L\pi_*\csm(X_{w}^\circ)=\pi_*\mathcal{T}_i^L\csm(X_{w}^\circ)=\pi_*\csm(X_{s_iw}^\circ)=\csm(X_{s_iwW_P}^\circ).
\end{align*}
This finishes the proof of the first identity. The second identity follows by Poincar{\'e} duality, as in the proof of the Corollary \ref{cor:Tissm}, utilizing Lemma \ref{lemma:DLcomm}(b) and that
 \begin{align*}
\langle \csm(X_{uW_P}^\circ),\mathcal{T}_i^{L,\vee}\ssm(X^{wW_P,\circ})\rangle=&s_i.\langle \mathcal{T}_i^{L}\csm(X_{uW_P}^\circ),\ssm(X^{wW_P,\circ})\rangle\\
=&s_i.\langle \csm(X_{s_iuW_P}^\circ),\ssm(X^{wW_P,\circ})\rangle\\
=&\delta_{s_iu W_P,w W_P}.
\end{align*}
\end{proof}

\subsection{R-matrix recursions}\label{sec:Rmat} We show next that the left $W$-action on CSM classes recovers the R-matrix recursions for the stable envelopes from \cite{maulik2019quantum,RTV:partial}. 

Let $X:=G/P$ and denote by $\iota:X \to T^*_{X}$ the zero section. The action of $T$ on $X$ induces an action $T^*_X$, and in addition there is an action of $\C^*$ on $T^*_X$ given by dilation with character 
$\hbar^{-1}$. The pull-back 
$\iota^*: H_{2 \dim X}^{T \times \C^*}(T^*_X) \to H_{0}^{T \times \C^*}(X)$ is a $H^*_{T \times \C^*}(pt)$-linear isomorphism. 
(Here we pass from cohomology to homology by Poincar{\'e} duality.) 
Observe that since $\C^*$ acts trivially on $X$, every class
$\kappa \in H_0^{T \times \C^*}(X)$ can be written uniquely as 
$\kappa = \sum a_i \hbar^i$ where $a_i \in H_{2 i}^T(X)$. 
It follows that we may identify $H_0^{T \times \C^*}(X) \simeq H_*^T(X)$ with 
the correspondence given by homogenization by $\hbar$. See \cite{AMSS:shadows} for more details on this homogenization.

Maulik and Okounkov defined in \cite{maulik2019quantum} the {\em stable envelopes}; see also \cite{su:restriction}. A stable envelope $\mathrm{stab}_{\mathcal{C}_\sigma}(wW_P)$ is an element in $H_{2 \dim X}^{T \times \C^*}(T^*_X)$ defined by certain interpolation conditions. It depends on a chamber $\mathcal{C}_\sigma \subset Lie(T)$ for $\sigma \in W$, and on the fixed point $e_{w W_P}$. By convention
$\mathcal{C}_{id}$ is the dominant chamber, and $\mathcal{C}_{\sigma} = \sigma . \mathcal{C}_{id}$. Since $\iota^*$ is an isomorphism such that $w^L \iota^* = \iota^* w^L$ ($w \in W$), the following result shows that the stable envelopes are determined by the CSM classes.

\begin{thm}\label{thm:stabcsm}\cite{rimanyi.varchenko:csm,AMSS:shadows,AMSS:motivic} Let $\sigma,w \in W$, and $s_i$ a simple reflection. 

(a) The left action of $W$ on $\mathrm{stab}_{\mathcal{C}_\sigma}(wW_P)$ is given by \[ s_i^L. \mathrm{stab}_{\mathcal{C}_\sigma}(wW_P) =  \mathrm{stab}_{\mathcal{C}_{s_i \sigma}}(s_i wW_P) \/. \]

(b) The CSM classes are equivalent to the stable envelopes for the identity chamber: 
\[ \iota^* \mathrm{stab}_{\mathcal{C}_{id}}(wW_P) = (-1)^{\dim X} \csm^{\hbar}(X_{wW_P}^\circ) \/, \] where 
$\csm^{\hbar}(X_{wW_P}^\circ)$ denotes the homogenization of the ordinary CSM class.
\end{thm}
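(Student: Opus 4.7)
The plan is to derive both parts from the axiomatic characterization of the Maulik--Okounkov stable envelope. Recall that $\mathrm{stab}_{\mathcal{C}_\sigma}(wW_P)$ is uniquely determined by three conditions: a support condition (the class is supported on the closure of the attracting set for $\mathcal{C}_\sigma$ at $e_{wW_P}$), a normalization condition (the restriction at $e_{wW_P}$ equals, up to a sign depending on a choice of polarization, the equivariant Euler class of the repelling part of the tangent space there), and a ``smallness'' degree bound on the restrictions at other fixed points. By uniqueness, for each of (a) and (b) it suffices to check that the candidate on the left-hand side satisfies the three axioms characterizing the right-hand side.

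For part (b), I would verify that $(-1)^{\dim X}\,\csm^{\hbar}(X_{wW_P}^\circ)$, viewed in $H^{T \times \C^*}_{2\dim X}(T^*_X)$ via the inverse of the isomorphism $\iota^*$, satisfies the axioms for the identity chamber. The support condition is immediate, since $\csm(X_{wW_P}^\circ)$ is supported in the closure $X_{wW_P}$, which is the attractor for the dominant chamber at $e_{wW_P}$. The normalization follows from the localization formula for CSM classes at $e_{wW_P}$ proved in \cite[Prop.~6.5]{aluffi.mihalcea:eqcsm}: the local restriction factors as an explicit product over the tangent weights of $X_{wW_P}$ at $e_{wW_P}$, and one matches this against the Euler class of the repelling part of $T_{e_{wW_P}}T^*_X$ for the dominant chamber, picking up the advertised sign. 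The smallness at fixed points $vW_P \neq wW_P$ is derived from the triangularity of the Schubert expansion of $\csm(X_{wW_P}^\circ)$, combined with the standard degree estimates on localizations of Schubert classes; after homogenization by $\hbar$ this translates directly into the Maulik--Okounkov degree axiom.

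For part (a), the idea is again uniqueness. A lift of $s_i$ in $N_G(T)$ induces an automorphism of $T^*_X$ which is equivariant only up to the automorphism $\varphi_{s_i}$ of $T$; consequently $s_i^L$ intertwines the $T$-action on $\mathrm{Lie}(T)$ with the natural Weyl group action, and it sends the attractor for $\mathcal{C}_\sigma$ at $e_{wW_P}$ onto the attractor for $\mathcal{C}_{s_i\sigma}$ at $e_{s_iwW_P}$. This gives the support axiom. The normalization axiom follows because pullback by $s_i^L$ carries the tangent weights at $e_{wW_P}$ to their $s_i$-translates, which are exactly the tangent weights at $e_{s_iwW_P}$, with signs dictated by the new chamber $\mathcal{C}_{s_i\sigma}$. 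The degree bound is preserved since $s_i^L$ is a graded ring automorphism. By the uniqueness of the stable envelope for $(\mathcal{C}_{s_i\sigma}, e_{s_iwW_P})$, the identity follows.

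The step I expect to be the main obstacle is the careful bookkeeping of signs and polarizations in the normalization axiom: the Maulik--Okounkov construction depends on a choice of polarization (a splitting of the tangent weights at each fixed point compatible with the chosen chamber), and one must track how this choice interacts with the $s_i^L$-action in (a) and with the explicit localization formula for CSM classes in (b). The combinatorics of the root subsystem $R^+ \setminus R_P^+$ and its $w$-translates enters here, and verifying that all signs line up---and in particular that the overall factor $(-1)^{\dim X}$ in (b) is correct---requires real care even though each individual ingredient is by now standard.
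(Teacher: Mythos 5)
The paper does not actually prove Theorem~\ref{thm:stabcsm}: its ``proof'' is purely bibliographic, citing part~(a) from \cite{AMSS:motivic} (Lemma~8.2 there, stated for the K-theoretic stable envelope) and part~(b) from \cite{AMSS:shadows} (Corollary~6.6). So there is no internal argument to compare against; the meaningful comparison is with the method of those references, which is exactly the axiomatic-uniqueness strategy you sketch.

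Your outline is sound: verify the Maulik--Okounkov support, normalization, and degree axioms for the candidate class and then invoke the uniqueness of the stable envelope. Two places deserve a bit more care than your prose gives them. First, in~(a) you assert the degree bound is preserved because ``$s_i^L$ is a graded ring automorphism''; but $s_i^L$ is \emph{not} $H^*_T(pt)$-linear (it twists coefficients by the $W$-action), and the MO degree axiom is a statement about $H^*_T(pt)$-degree. The axiom does survive because $W$ acts on $H^*_T(pt)$ by degree-preserving automorphisms, but that deserves to be said. More significantly, the bookkeeping of $\Phi_{s_i}$ versus $\Phi_{s_i}^{-1}$ in identifying how the attracting set and its associated chamber transform conceals several inversions (the identity $s_i^L.\mathrm{stab}_{\mathcal{C}_\sigma}(wW_P)=\mathrm{stab}_{\mathcal{C}_{s_i\sigma}}(s_iwW_P)$ has both a chamber change and a fixed-point change, and it is easy to land on $s_i^{-1}\sigma$ or $s_i^{-1}w$ if one is not systematic). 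Second, for~(b) you correctly identify the sign/polarization bookkeeping as the real content; in the cited proof this is handled by fixing the polarization to be the one given by $TX\subset T(T^*_X)$, after which the factor $(-1)^{\dim X}$ falls out of matching the MO normalization against the localization formula for $\csm(X_w^\circ)|_w$ in \cite[Prop.~6.5]{aluffi.mihalcea:eqcsm}. In short, your approach coincides with the one used in the references the paper cites, and your diagnosis of where the difficulty lies is accurate; a full proof needs to nail down precisely those two points.
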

\begin{proof} Part (a) is proved (more generally for the K-theoretic stable envelopes) in \cite[Lemma 8.2]{AMSS:motivic}. Part (b) is \cite[Cor.~6.6]{AMSS:shadows}; see also \cite{rimanyi.varchenko:csm}.\end{proof}

By Theorem \ref{thm:csmleft}, the homogenized CSM classes may be calculated recursively from the homogenized left DL operator: \[ \mathcal{T}_{i}^{L,\hbar} := s_i^L- \hbar \delta_i  = (1+ \frac{\hbar}{\alpha_i}) s_i^L - \frac{\hbar}{\alpha_i} id  \/.\] Solving for $s_i^L$ we obtain \[ s_i^L = \frac{\alpha_i}{\alpha_i + \hbar} \mathcal{T}_i^{L,\hbar} + \frac{\hbar}{\alpha_i + \hbar} id \/. \] (This may also be obtained from the R-matrix with spectral parameter $R(u)$ \cite[eq.~(4.1) and p.~136]{maulik2019quantum}, after making the substitutions $\mathbf{s} \mapsto \mathcal{T}_i^{L,\hbar}$, $u \mapsto \alpha_i$ and then homogenizing with the character $-\hbar$.)
The left $W$ action on homogenized CSM classes follows immediately from the previous expression and 
Theorem \ref{thm:csmleft}.
\begin{prop} Let $s_i$ be a simple reflection, and $w \in W^P$. Then 
\[ s_i^L . \csm^\hbar(X_{wW_P}^\circ) = 
\frac{\hbar}{\hbar+\alpha_i}\csm^\hbar(X_{wW_P}^{\circ})+\frac{\alpha_i}{\hbar+\alpha_i}\csm^\hbar(X_{s_iwW_P}^{\circ}) \/.\]
\end{prop}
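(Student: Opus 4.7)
The plan is to invert the displayed expression $\mathcal{T}_i^{L,\hbar}=(1+\tfrac{\hbar}{\alpha_i})s_i^L-\tfrac{\hbar}{\alpha_i}\,\mathrm{id}$ to solve for $s_i^L$, as already done in the paragraph preceding the statement, obtaining
\[ s_i^L = \frac{\alpha_i}{\alpha_i+\hbar}\,\mathcal{T}_i^{L,\hbar} + \frac{\hbar}{\alpha_i+\hbar}\,\mathrm{id}. \]
Then I would apply both sides to $\csm^\hbar(X_{wW_P}^\circ)$ and invoke the homogenized version of Theorem \ref{thm:csmleft} to evaluate $\mathcal{T}_i^{L,\hbar}\csm^\hbar(X_{wW_P}^\circ)=\csm^\hbar(X_{s_iwW_P}^\circ)$. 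Substituting this in produces exactly the claimed linear combination, so up to verifying the homogenized version of the CSM recursion the proof reduces to a one-line algebraic manipulation.

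The only nontrivial step is justifying that Theorem \ref{thm:csmleft} upgrades to the $\C^*$-homogenized setting. Here I would argue as follows: the operator $\mathcal{T}_i^L=-\delta_i+s_i^L$ is a sum of operators of different cohomological degrees (the BGG piece $\delta_i$ decreases degree by $2$, while $s_i^L$ preserves it), and the homogenization $\mathcal{T}_i^{L,\hbar}$ is exactly what one obtains by tracking these degrees with powers of $\hbar$ so that it becomes homogeneous for the bigrading on $H^*_{T\times\C^*}$. Since $\csm^\hbar$ is the corresponding homogenization of $\csm$, the identity $\mathcal{T}_i^L\csm(X_{wW_P}^\circ)=\csm(X_{s_iwW_P}^\circ)$ translates degree-by-degree into $\mathcal{T}_i^{L,\hbar}\csm^\hbar(X_{wW_P}^\circ)=\csm^\hbar(X_{s_iwW_P}^\circ)$, exactly as needed. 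A cleaner alternative is to appeal directly to Theorem \ref{thm:stabcsm}, which already phrases the relevant identities in the equivariant ring including $\hbar$.

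The main (mild) obstacle is bookkeeping: one must work in an appropriate localization where $\alpha_i+\hbar$ is invertible, and one should check that the formula gives an honest element of $H^*_{T\times\C^*}(G/P)$ rather than merely its localization. This is automatic once one notices that the coefficients $\tfrac{\hbar}{\hbar+\alpha_i}$ and $\tfrac{\alpha_i}{\hbar+\alpha_i}$ sum to $1$, so their combined application to any pair of classes agreeing in leading order (which $\csm^\hbar(X_{wW_P}^\circ)$ and $\csm^\hbar(X_{s_iwW_P}^\circ)$ do, both starting from fundamental Schubert classes) produces a class with no denominator. Beyond this formal check, no new geometric input is required; the proposition is essentially a repackaging of Theorem \ref{thm:csmleft} after inverting the relation between $s_i^L$ and $\mathcal{T}_i^{L,\hbar}$.
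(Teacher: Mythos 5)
Your proof is correct and is essentially the paper's own (very terse) one: the paper states the result follows immediately from the inversion $s_i^L=\tfrac{\alpha_i}{\alpha_i+\hbar}\mathcal{T}_i^{L,\hbar}+\tfrac{\hbar}{\alpha_i+\hbar}\,\mathrm{id}$ combined with Theorem \ref{thm:csmleft}, and your fuller account of why the homogenization carries the identity over degree-by-degree is a legitimate filling-in of that step. One small quibble: the ``no denominators'' worry in your last paragraph is both unnecessary and misjustified --- $\csm^\hbar(X_{wW_P}^\circ)$ and $\csm^\hbar(X_{s_iwW_P}^\circ)$ do \emph{not} agree in leading order (their top-degree terms are the distinct Schubert classes $[X_{wW_P}]$ and $[X_{s_iwW_P}]$); the actual reason no localization is needed is simply that $s_i^L$ is already a well-defined automorphism of $H^*_{T\times\C^*}(G/P)$, so the left-hand side is an honest polynomial class and the manipulation merely exhibits it in a rewritten form.
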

Combined with Theorem \ref{thm:stabcsm}, this Proposition recovers the recursion for the weight functions from \cite[Lemma 3.6]{RTV:partial}; by \cite[Thm.~4.1]{RTV:partial} (the main theorem in {\em loc.~cit.}) that recursion is the same as the recursion for the stable envelopes.
 

\section{Left divided difference operators in equivariant K-theory} Our next goal is to prove a K theoretic analogue of Theorem \ref{thm:csmleft}.
 We start by recalling some basics on equivariant K-theory, and then we will define the K-theoretic versions of BGG and DL operators.

\subsection{Preliminaries} Let $X$ be a smooth projective variety with an action of a linear algebraic group $G$. For an introduction to equivariant K-theory, and more details, see \cite{chriss2009representation}. The equivariant K-theory ring $K_G(X)$ is the Grothendieck ring generated by symbols $[E]$, where $E \to X$ is an $G$-equivariant vector bundle, modulo the relations $[E]=[E_1]+[E_2]$ for any short exact sequence $0 \to E_1 \to E \to E_2 \to 0$ of equivariant vector bundles. The additive ring structure is given by direct sum, and the multiplication is given by tensor products of vector bundles. Since $X$ is smooth, any coherent sheaf has a finite resolution by vector bundles, and the ring $K_G(X)$ coincides with the Grothendieck group of $G$-linearized coherent sheaves on $X$. 

The ring $K_G(X)$ is an algebra over $K_G(pt) = R(G)$, the representation ring of $G$. If $G=T$ is a torus, then this is the Laurent polynomial ring $K_T(pt) =\Z[e^{\pm t_1}, \ldots , e^{\pm t_r}]$ where $e^{\pm t_i}$ are characters corresponding to a basis of the Lie algebra of $T$ (and $t_i$ correspond to the generators for $H^*_T(pt)$ from above). Since $X$ is proper, the push-forward to a point equals the Euler characteristic, or, equivalently, the virtual representation, \[\chi(X, \mathcal{F})= \int_X [\mathcal{F}] := \sum_i (-1)^i H^i(X, \mathcal{F}) \/. \] 
In particular, for $E,F$ equivariant vector bundles, this gives a pairing \[ \langle - , - \rangle :K_G(X) \otimes K_G(X) \to K_G(pt); \quad \langle [E], [F] \rangle := \int_X E \otimes F = \chi(X, E \otimes F) \/. \] To simplify notation, we will denote by the same symbol a vector bundle $E$ and its K-theory class. 

Any $G$-linearized coherent sheaf $\mathcal{F}$ on $X$ determines a class $[\mathcal{F}] \in K_G(X)$. In particular, if $\Omega \subset X$ is a $G$-stable subscheme, then its structure sheaf determines a class $[\cO_\Omega] \in K_G(X)$.~If $\Omega = \{ x \}$ is a point, we will denote its (equivariant) K-theory class by $\iota_x$.

Assume now that $G=T$ is a complex torus, and let $V$ be a (complex) vector space with a $T$-action, with weight decomposition 
$V = \oplus_i V_{\mu_i}$, where each $\mu_i$ is a weight in the dual of the Lie algebra of $T$. 
The {\em character} of $V$ is the element $ch(V):=\sum_i \dim V_{\mu_i} e^{\mu_i}$, regarded  in $K_T(pt)$. If $y$ is an indeterminate, 
the $\lambda_y$ class of $V$, denoted $\lambda_y(V)$, is the element 
\[ \lambda_y(V) =\sum_{i \ge 0} y^i ch(\wedge^i V) \in K_T(pt)[y] \/.\] 
The $\lambda_y$ class is multiplicative for short exact sequences, i.e. if $0 \to V_1 \to V_2 \to V_3 \to 0$ 
is a short exact sequence then $\lambda_y(V_2) = \lambda_y(V_1) \lambda_y(V_3)$. In particular, 
$\lambda_y(V) = \prod_i (1+y e^{\mu_i})^{\dim V_{\mu_i}}$; see \cite{hirzebruch:topological}.

We recall next a version of the localization theorem in the case when $G=T$ is a complex torus, which will be used throughout this note. Our main reference is Nielsen's paper \cite{nielsen:diag}; see also \cite{chriss2009representation}. 

Let $S$ be the subset of $K_T(pt)$ generated by elements of the form $1-e^\mu$ for nontrivial torus weights $\mu$. Then $S \subset R(T)$ is a multiplicative subset and $0 \notin S$. If $V$ is a $T$-module such that the fixed locus $V^T = \{ 0 \}$, then $S$ contains the element $\lambda_{-1}(V) = \prod (1-e^{\mu_i})^{\dim V_{\mu_i}}$. Denote by $K_T(X)_{\loc}$ respectively $K_T(pt)_{\loc}$ the localization of $K_T(X)$ and of $K_T(pt)$ at $S$. Since $R(T)$ is a domain, we may identify $K_T(X)$ with a subring inside its localization. For each $x \in X^T$, let $i_x: \{ x \} \to X$ denote the inclusion. This is a $T$-equivariant proper morphism, and it induces a map $i_x^*: K_T(X) \to K_T(\{x\}) = K_T(pt)$. We need the following simplified version of the localization theorem; cf.~ \cite{nielsen:diag}.

\begin{thm}\label{thm:loc} Assume that the fixed point set $X^T$ is finite, and let $N$ be the normal bundle of $X^T$ in $X$. Then the following hold:

(a) The class $\lambda_{-1}(N^\vee)$ is a unit in $K_T(X)_{\loc}$;

(b) When $x$ varies in $X^T$, the structure sheaves $\iota_x:=[\cO_x]$ of the fixed points form a $K_T(pt)_{\loc}$-basis of $K_T(X)_{\loc}$;

(c) For any $T$-linearized coherent sheaf $\mathcal{F}$ on $X$, the following formula holds:

\[ [\mathcal{F}] = \sum_{x \in X^T} \frac{i_x^* [\mathcal{F}]}{\lambda_{-1}(T^*_x X)} \iota_x \in K_T(pt)_{\loc} \/. \]

\end{thm}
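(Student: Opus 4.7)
The plan is to deduce all three parts from the standard strategy for equivariant $K$-theoretic localization, centered on the self-intersection formula for the closed embedding $i: X^T \hookrightarrow X$. First I would handle part (a): since $X^T$ is finite, $N = T_X|_{X^T}$ decomposes at each $x \in X^T$ as a sum of lines $T_x X = \bigoplus \mathbb{C}_{\mu_i}$, and the weights $\mu_i$ must all be nontrivial (otherwise a one-parameter subgroup would fix a curve through $x$, contradicting finiteness of $X^T$). Hence
\[
\lambda_{-1}(T_x^* X) = \prod_i (1 - e^{-\mu_i}) \in S,
\]
which is invertible in $K_T(pt)_{loc}$. Since $X^T$ is a disjoint union of points, this means $\lambda_{-1}(N^\vee)$ is a unit in $K_T(X^T)_{loc}$, and via the pushforward/pullback between $X^T$ and $X$ one gets the statement in $K_T(X)_{loc}$.

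Next, for (b), I would invoke the self-intersection formula $i^* i_* \alpha = \lambda_{-1}(N^\vee) \cdot \alpha$ (provable by resolving the structure sheaf of the diagonal-type embedding by a Koszul complex) together with a devissage argument on the open complement $U = X \setminus X^T$. The excision long exact sequence in equivariant $K$-theory,
\[
K_T(X^T) \xrightarrow{i_*} K_T(X) \to K_T(U) \to 0,
\]
reduces surjectivity of $i_*$ after localization to the key vanishing $K_T(U)_{loc} = 0$. For the latter, one filters any $T$-equivariant coherent sheaf on $U$ by support and uses that every $T$-orbit in $U$ has a positive-dimensional stabilizer subtorus acting with nontrivial characters, which forces the class of each orbit's structure sheaf to be killed by an element of $S$. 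Combined with part (a), the self-intersection formula then makes $i_*$ also injective after localization, so $i_*$ is an isomorphism and the structure sheaves $\iota_x$, being the images of the bases of each $K_T(\{x\})_{loc}$, form a $K_T(pt)_{loc}$-basis of $K_T(X)_{loc}$.

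For (c), given any $T$-linearized coherent $\mathcal{F}$, use (b) to write uniquely $[\mathcal{F}] = \sum_{x \in X^T} c_x \iota_x$ with $c_x \in K_T(pt)_{loc}$. Applying $i_y^*$ and noting that $i_y^* \iota_x = 0$ for $x \neq y$ (disjoint supports) while $i_x^* \iota_x = \lambda_{-1}(T_x^* X)$ by the self-intersection formula at the point $x$, one solves $c_x = i_x^*[\mathcal{F}] / \lambda_{-1}(T_x^* X)$, which is the desired formula. The main obstacle is the vanishing $K_T(U)_{loc} = 0$ used in the surjectivity step of (b); all other ingredients are formal consequences of the Koszul resolution and basic representation-ring arithmetic. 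I would cite Nielsen's treatment \cite{nielsen:diag} for this vanishing rather than redo it, since it is the technical heart of the classical localization theorem and is not specific to the flag-variety setting of this paper.
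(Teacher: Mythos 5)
The paper gives no proof of Theorem \ref{thm:loc}: it is stated as a recalled result, with references to Nielsen \cite{nielsen:diag} and Chriss--Ginzburg \cite{chriss2009representation}. So there is no ``paper's proof'' to compare against; what you have produced is a correct sketch of the standard localization argument, which is in fact what those references carry out. Specifically: your derivation of (a) from the fact that all weights of $T_xX$ at an isolated fixed point are nontrivial is correct; your proof of (b) via excision (surjectivity from $K_T(U)_{loc}=0$) combined with the self-intersection formula and (a) (injectivity) is the standard devissage; and your derivation of (c) by applying $i_y^*$ and using $i_y^*\iota_x=0$ for $y\neq x$ together with the Koszul identity $i_x^*\iota_x=\lambda_{-1}(T_x^*X)$ is correct.

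One small imprecision worth noting: in the vanishing step for $K_T(U)_{loc}$ you invoke ``a positive-dimensional stabilizer subtorus.'' The stabilizer of a point of $U$ need not be positive-dimensional --- it can be trivial or finite; what matters is that it is a \emph{proper} closed subgroup, so the orbit is positive-dimensional. Then for any closed orbit $T\cdot u \cong T/T_u$, one has $K_T(T/T_u)\cong R(T_u)$ as an $R(T)$-module, and restriction $R(T)\to R(T_u)$ kills $1-e^\mu$ for any character $\mu$ vanishing on $T_u$; since such $\mu\neq 0$ exists precisely because $T_u$ is proper, $R(T_u)$ localizes to zero at $S$. A noetherian filtration-by-support argument then gives $K_T(U)_{loc}=0$. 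You correctly identify this as the technical heart and defer it to Nielsen, which is exactly what the paper itself does; with the wording adjusted, the sketch is sound.
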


We now specialize to the case when $X:=G/P$, with the $G$-action given by left multiplication, and the torus $T$ acting by restriction. The Schubert varieties $X_w$ and $X^w$ determine classes $\calO_w:= [\cO_{X_w}]$ and $\calO^w:= [\cO_{X^w}]$ in $K_T(X)$. Similarly, the $T$-fixed points give classes denoted by $\iota_w:= [\cO_{e_w}]$. The equivariant K-theory $K_T(G/P)$ is a free module over $K_T(pt)$ with a basis given by Schubert classes $\{\cO_w \}_{w \in W^P}$ respectively $\{\cO^w \}_{w \in W^P}$. By the localization theorem \ref{thm:loc}, $\{\iota_w|w\in W\}$ is a basis for the localized ring $K_{T}(X)_{\loc}$. For any $\gamma\in K_T(G/P)$, let $\gamma|_w\in K_T(pt)$ denote the pullback of $\gamma$ to the fixed point $e_w$. If $P=B$, and $\lambda$ is any torus weight, let $\calL_\lambda:=G\times^B\bbC_\lambda$ denote the corresponding line bundle. We also record the following well-known lemma. 

\begin{lemma}\label{lemma:proj} Let $\pi:G/B \to G/P$ the projection and let 
$\Omega \subset G/B$ and $\Omega' \subset G/P$ be any ($B$ or $B^-$-stable) Schubert varieties. Then 
\[ \pi_*[\cO_\Omega] = [\cO_{\pi(\Omega)}] \in K_T(G/P) \/; \quad  \pi^*[\cO_{\Omega'}] = [\cO_{\pi^{-1}(\Omega')}] \in K_T(G/B) \/. \]
\end{lemma}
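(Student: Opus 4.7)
The plan is to handle the pull-back and push-forward formulas separately, as they rely on different sheaf-theoretic inputs.

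For the pull-back, the morphism $\pi: G/B \to G/P$ is a Zariski-locally trivial fibration with smooth fiber $P/B$, hence $\pi$ is flat. For any closed $B$- or $B^-$-stable subscheme $\Omega' \subset G/P$ (in particular any Schubert variety, which is reduced and irreducible), flatness gives $L^i\pi^*\cO_{\Omega'} = 0$ for $i > 0$, so that $\pi^*[\cO_{\Omega'}] = [\pi^*\cO_{\Omega'}] = [\cO_{\pi^{-1}(\Omega')}]$ in $K_T(G/B)$, where $\pi^{-1}(\Omega')$ carries the scheme-theoretic preimage structure (which coincides with the reduced structure by smoothness of $\pi$). Moreover $\pi^{-1}(\Omega')$ is itself a Schubert variety in $G/B$, since $\pi$ is $G$-equivariant and Schubert varieties in $G/P$ correspond to those Schubert varieties in $G/B$ which are stable under right multiplication by $P$.

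For the push-forward, take $\Omega = X_w$ (the case $\Omega = X^w$ is handled analogously, replacing $B$ by $B^-$). Decompose $w = w_1 w_2$ with $w_1 \in W^P$, $w_2 \in W_P$ and $\ell(w) = \ell(w_1) + \ell(w_2)$. Then $\pi(X_w)$ is the closure of $B w_1 P/P$, so $\pi(X_w) = X_{w_1 W_P} = X_{wW_P}$. The needed identity $\pi_*[\cO_{X_w}] = [\cO_{X_{wW_P}}]$ amounts to showing $R\pi_*\cO_{X_w} = \cO_{X_{wW_P}}$, i.e. that $\pi_*\cO_{X_w} = \cO_{X_{wW_P}}$ and $R^i\pi_* \cO_{X_w} = 0$ for $i > 0$. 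The key input is the classical cohomological triviality of Schubert varieties (Kempf, Ramanathan): for any Schubert variety $Y$ in $G/B$ or $P/B$, $H^0(Y, \cO_Y) = \bbC$ and $H^i(Y, \cO_Y) = 0$ for $i > 0$. Applying this fiberwise to $\pi|_{X_w}: X_w \to X_{wW_P}$, whose fibers over the Schubert stratification of $X_{wW_P}$ are (translates of) Schubert varieties in $P/B$, and combining with the theorem on cohomology and base change (or the theorem on formal functions), yields the vanishing of the higher direct images. The natural adjunction map $\cO_{X_{wW_P}} \to \pi_*\cO_{X_w}$ is then an isomorphism, because both are rank-one coherent sheaves on the reduced, irreducible variety $X_{wW_P}$, and the map is an isomorphism on the open cell $Bw_1 P/P$.

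The main obstacle is the rigorous verification of the base-change step, i.e.\ the fact that the stratified fibration $\pi|_{X_w}$ has uniformly cohomologically trivial fibers. In practice one avoids analyzing fibers one at a time either by exhibiting a Bott-Samelson resolution compatible with $\pi$ and pushing $\cO$ down along it stepwise (each stage is a $\bP^1$-bundle, so the vanishing is immediate), or by invoking Frobenius splitting of Schubert varieties. Either approach is standard in the theory of Schubert varieties, and for the present paper it would suffice to cite Brion--Kumar's book on Frobenius splitting methods (Chapter 3), or Kumar's book on Kac--Moody groups and their flag varieties (Chapter XI), where exactly this push-forward statement is established in the more general Kac--Moody setting.
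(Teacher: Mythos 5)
Your proof takes essentially the same route as the paper's. For the pull-back the paper's entire argument is ``because $\pi$ is a flat morphism,'' which is your first paragraph. For the push-forward the paper simply cites \cite[Thm.~3.3.4(a)]{brion.kumar:frobenius}, which is exactly the reference you land on after sketching the ideas behind it (cohomological triviality of Schubert varieties, Bott--Samelson/Frobenius-splitting arguments). Your sketch of the base-change step is somewhat imprecise --- the fibers of $\pi|_{X_w}$ over the lower strata of $X_{wW_P}$ are not simply translates of a single Schubert variety in $P/B$, and making the formal-functions/base-change argument rigorous requires more care than ``apply fiberwise'' suggests --- but you correctly flag this as the main obstacle and defer to the standard reference, which is precisely what the paper does.
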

\begin{proof} The first equality follows from \cite[Thm. 3.3.4(a)]{brion.kumar:frobenius} and the second because $\pi$ is a flat morphism.\end{proof}

\subsection{Left and right Demazure operators}\label{sec:KDem} Let $X=G/B$. The left and right multiplications by Weyl group elements defined in \S \ref{sec:twoWactions} induce left and right $W$-actions on $K_T(X)$. We use the same notation of $w^L$ and $w^R$ respectively as in cohomology. In terms of localization, we have
\[w^L(\gamma)|_u=w(\gamma|_{w^{-1}u}),\quad \textit{and}\quad w^R(\gamma)|_u=\gamma|_{uw},\]
where $\gamma\in K_T(X)$ and $u,w\in W$. As in cohomology, the left action is well defined for $K_T(G/P)$ where $P$ is an arbitrary parabolic. 

\begin{prop}\label{prop:Kprop} Let $w \in W$ and $P$ be an arbitrary parabolic subgroup. The left and right $W$-actions $w^L$ and $w^R$ have the following properties:

(a) $w^R$ is a $K_T(pt)$-linear algebra automorphisms of $K_T(G/B)$;

(b) $w^L$ is a ring automorphism of $K_T(G/P)$. In particular, for any $\lambda \in Lie(T)^*$ and $a \in K_T(G/P)$, \[ w^L(e^\lambda \cdot a) = e^{w(\lambda)} \cdot w^L(a) \/. \] 

(c) For any $u,v \in W$, $u^L v^R = v^R u^L$ as automorphisms of $K_T(G/B)$.

(d) Let $Q \subset P $ be any parabolic subgroups and $\pi:G/Q \to G/P$ the projection. Then the left action $w^L$ commutes with $\pi_*: K_T(G/Q) \to K_T(G/P)$ and with $\pi^*: K_T(G/P) \to K_T(G/Q)$. 

(e) For any $\gamma_1,\gamma_2 \in K_T(G/P)$, and $w\in W$, 
\[\langle w^L(\gamma_1),w^L(\gamma_2)\rangle=w.\langle\gamma_1,\gamma_2\rangle \/.\]

(f) For any simple reflection $s_i\in W$, \[ s_i^L. \iota_{wW_P} = \iota_{s_i wW_P} \] as classes in $K^*_T(G/P)_{\loc}$.

(g) For any simple reflection $s_i\in W$, \[ s_i^R .\iota_w = - e^{w(\alpha_i)} \iota_{ws_i} \] as classes in $K^*_T(G/B)_{\loc}$.

\end{prop}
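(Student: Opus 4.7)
My plan is to separate the proposition into two groups. Parts (a)--(e) are formal consequences of the geometric setup, while parts (f) and (g) are concrete localization computations analogous to Lemma \ref{lemma:cohaction}.

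For (a)--(e), I would proceed as follows. Right multiplication on $G/B$ commutes with the left $T$-action, so it is a $T$-equivariant automorphism; pulling back $T$-linearized sheaves yields a $K_T(pt)$-linear ring automorphism $w^R$, proving (a). For (b), the map $\Phi_w:G/P\to G/P$ sending $xP\mapsto n_w xP$ is an algebraic morphism which is equivariant with respect to the inner automorphism $\varphi_w:t\mapsto n_w t n_w^{-1}$ of $T$; pullback of $T$-linearized coherent sheaves therefore gives a ring automorphism $w^L=\Phi_w^*$ on $K_T(G/P)$, and the twisted module identity $w^L(e^\lambda\cdot a)=e^{w\lambda}w^L(a)$ is just the statement that $\Phi_w^*$ intertwines the character $\lambda$ of the original $T$-action with $w\lambda$ for the new one. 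Part (c) follows because left and right multiplications on $G/B$ commute as maps of varieties. For (d), the projection $\pi:G/Q\to G/P$ is $G$-equivariant, so it commutes with $\Phi_w$; functoriality of pushforward and pullback of $T$-linearized sheaves then yields the commutation of $w^L$ with $\pi_*$ and $\pi^*$. Part (e) is a special case: apply (d) to the structure morphism $G/P\to \mathrm{pt}$, which is $G$-equivariant, and note that under $w^L$ the induced action on $K_T(pt)$ is the natural $W$-action.

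For (f), I would argue by the localization theorem \ref{thm:loc}. The class $\iota_{wW_P}$ localizes to $\lambda_{-1}(T^*_{wW_P}(G/P))$ at $wW_P$ and to $0$ elsewhere. Using the formula $(s_i^L\gamma)|_u=s_i(\gamma|_{s_i u})$, one finds that $(s_i^L\iota_{wW_P})|_u$ is nonzero precisely when $s_i u W_P=wW_P$, i.e.\ $uW_P=s_iwW_P$, and in that case equals $s_i\bigl(\lambda_{-1}(T^*_{s_iu}(G/P))\bigr)$. Using the weight decomposition $T_v(G/P)=\bigoplus_{\alpha>0,\,\alpha\notin R_P^+}\bbC_{v(-\alpha)}$ one immediately checks $s_i(\lambda_{-1}(T^*_{s_iu}(G/P)))=\lambda_{-1}(T^*_u(G/P))$, which matches $\iota_{s_iwW_P}|_u$ and proves (f).

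For (g), the same strategy applies. On the one hand $(s_i^R\iota_w)|_u=\iota_w|_{us_i}$ is nonzero only for $u=ws_i$, where it equals $\lambda_{-1}(T^*_w(G/B))$; on the other hand $(-e^{w\alpha_i}\iota_{ws_i})|_u$ vanishes unless $u=ws_i$, where it equals $-e^{w\alpha_i}\lambda_{-1}(T^*_{ws_i}(G/B))$. It suffices to prove the identity
\[
\lambda_{-1}(T^*_w(G/B))=-e^{w\alpha_i}\lambda_{-1}(T^*_{ws_i}(G/B)).
\]
This is the main calculational point. One isolates the factors indexed by $\alpha_i$ in the product $\prod_{\alpha>0}(1-e^{w\alpha})$ versus $\prod_{\alpha>0}(1-e^{ws_i\alpha})$; because $s_i$ permutes the positive roots other than $\alpha_i$, the remaining factors coincide, and the $\alpha_i$-factors match via the elementary identity $-e^{w\alpha_i}(1-e^{-w\alpha_i})=1-e^{w\alpha_i}$. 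The main obstacle is really only bookkeeping: one must keep the conventions $(s_i^L\gamma)|_u=s_i(\gamma|_{s_iu})$ and $(s_i^R\gamma)|_u=\gamma|_{us_i}$ straight, and carefully track the character correction $-e^{w\alpha_i}$ in (g), which has no cohomological analogue and arises precisely from the fact that the K-theoretic Euler classes at $w$ and $ws_i$ differ by this multiplicative factor rather than by a sign.
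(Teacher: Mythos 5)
Your proof is correct and essentially matches the paper's. For (a)--(e) you flesh out what the paper dismisses as ``the same proofs as for equivariant cohomology,'' and your localization computation for (g), reducing to the identity $\lambda_{-1}(T^*_w(G/B))=-e^{w\alpha_i}\lambda_{-1}(T^*_{ws_i}(G/B))$, is precisely the paper's comparison of $\iota_w|_w$ with $\iota_{ws_i}|_{ws_i}$. The one small divergence is (f): the paper observes directly that $s_i^L=\Phi_{s_i}^*$ and that $\Phi_{s_i}$ permutes the fixed points, so the conclusion is immediate without localization, whereas you rerun the localization argument; both are valid, and yours has the modest advantage of handling (f) and (g) uniformly, but the paper's geometric observation is shorter and makes transparent why no character correction like $-e^{w\alpha_i}$ appears in the left case.
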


\begin{proof} Parts (a)-(e) have the same proofs as for equivariant cohomology, based on analyzing the left and right multiplication morphisms. Part (f) follows because $s_i^L$ is induced from the left multiplication by $s_i$. Part (g) follows from a localization argument, as follows. 
Recall that the fixed point classes are determined by the localization formulae $(\iota_w)|_u= \delta_{w,u}\prod_{\alpha>0} (1-e^{w(\alpha)})$. Then \[ \frac{(s_i^R.\iota_w)|_{ws_i}}{\iota_{ws_i}|_{ws_i}} = \frac{\iota_w|_w}{\iota_{ws_i}|_{ws_i}} = \frac{\prod_{\alpha >0} (1- e^{w(\alpha)})}{\prod_{\alpha >0} (1 -e^{ws_i(\alpha)})} = w \cdot \frac{1-e^{\alpha_i}}{1-e^{-\alpha_i}} =-e^{w(\alpha_i)} \/. \] This finishes the proof of part (g).\end{proof} 

We move on to the definition of the left Demazure operators $\delta_i:=\delta_{\alpha_i}$ and the right Demazure operators $\partial_i:=\partial_{\alpha_i}$, where $\alpha_i$ is a simple root. 

The right operator has the same definition as in cohomology: $\partial_i = \pi_i^* (\pi_i)_*$, where $\pi_i:G/B \to G/P_i$ is the projection, and $P_i$ is the minimal parabolic subgroup. This operator appeared in \cite{demazure:desingularisations} in relation to the study of K-theory of flag manifolds. In terms of localization, we have
\[\partial_i(\gamma)|_v=\frac{\gamma|_v-e^{v\alpha_i}s_i^R(\gamma)|_v}{1-e^{v\alpha_i}},\]
where $\gamma\in K_T(G/B)$ and $v\in W$. The Demazure operators satisfy $\partial_i^2=\partial_i$ and the braid relations. The definition of $\partial_i$ and the Lemma \ref{lemma:proj} imply that \begin{equation}\label{E:demschub} \partial_i (\cO_w) = \begin{cases} \cO_{ws_i} & \textrm{if } ws_i > w \/; \\ \cO_w & \textrm{otherwise} \/; \end{cases} \quad 
\partial_i (\cO^{w}) = \begin{cases} \cO^{ws_i} & \textrm{ if } ws_i < w \/; \\ \cO_w & \textrm{otherwise} \/. \end{cases} \end{equation} The braid relations imply that $\partial_w$ is well defined and by equation \eqref{E:demschub}, $\calO_w=\partial_{w^{-1}}(\calO_{id})$.

Define two variants of left Demazure operators
\begin{equation}\label{E:KleftDem} \delta_i := \frac{1}{1-e^{\alpha_i}}(1-e^{\alpha_i}s_i^L),\quad 
\delta_i^\vee:=\frac{1}{1-e^{-\alpha_i}}(1-e^{-\alpha_i}s_i^L).\end{equation} Using the left $W$-action, this definition gives (module) endomorphisms of $K_T(G/P)$. The two variants are related by: 
\[w_0^L\delta_{\alpha_i} w_0^L=\delta_{-w_0(\alpha_i)}^\vee \/.\]
{The operators $\delta_i$ and $\delta_i^\vee$ are {\em a priori} defined only in $K_T(G/P)_{\loc}$, but in fact they preserve the non-localized ring.~To see this, observe that the Schubert classes $\cO_w$ form a $K_T(pt)$-basis of $K_T(G/P)$, then utilize this together with the Leibniz rule and Proposition \ref{prop:Kaction}(b).} An algebraic version of the operator $\delta_i^\vee$ appeared in \cite[Eq.~($I_4$)]{kostant.kumar:KT}, and it is proved in \textit{loc. cit.} that they satisfy $(\delta_i^\vee)^2=\delta_i^\vee$ and the braid relations. From that it follows that the geometric operators satisfy the same relations, and the same is true for $\delta_i$. The operators $\delta_i$ have been defined for any variety $X$ with a $G$ action in \cite{harada.landweber.sjamaar:divided} and may also be defined using certain convolutions; see the Appendix below. In fact, Proposition \ref{prop:eqdelta} provides another proof that $\delta_i$ preserves $K_T(G/P)$. {If $X=G/B$, there is a particularly vivid symmetry between the left and right Demazure operators, obtained by viewing these operators either as a left/right convolution, or as a single operator acting on first/second factor of $K_T(G/B) \simeq R(T) \otimes_{R(G)} R(T)$; see \S \ref{appex:two} below.}

Next we record some basic properties of these operators.
\begin{lemma}\label{lemma:Kalg} (a) The operator $\partial_i$ is a $K_T(pt)$-module homomorphism, i.e. for any $e^\lambda \in K_T(pt)$ and $\eta \in K_T(G/B)$, $\partial_i(e^\lambda \eta) = e^\lambda \partial_i(\eta)$.

(b) The operator $\delta_i$ is a $K_G(G/P)$-module homomorphism, i.e. for any $\kappa \in K_G(G/P)$ and $\eta \in K_T(G/P)$, $\delta_i(\kappa \eta) = \kappa \delta_i(\eta)$.

(c) For any $i,j$:
\[ \delta_i s_j^R = s_j^R \delta_i \/; \quad \partial_i s_j^L = s_j^L \partial_i \/; \quad \delta_i \partial_j = \partial_j \delta_i \/. \]

(d) (Leibniz rule) For any $a, b \in K_T(X)$, \[ \delta_i(a \cdot b) = \delta_i(a) \cdot b + e^{\alpha_i} s_i^L(a) \cdot \delta_i(b) - e^{\alpha_i} s_i^L(a) \cdot s_i^L(b) \/. \] 
\end{lemma}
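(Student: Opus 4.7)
The plan is to reduce each claim to the explicit formulae for $\partial_i$ and $\delta_i$, combined with the commutation properties of the left and right $W$-actions already recorded in Proposition~\ref{prop:Kprop}. For part~(a), I would invoke the projection formula for the proper $T$-equivariant morphism $\pi_i : G/B \to G/P_i$: any $e^\lambda \in K_T(pt)$ is pulled back from $G/P_i$ along the structure map, so $(\pi_i)_*(e^\lambda \cdot \eta) = e^\lambda \cdot (\pi_i)_*\eta$, and applying $\pi_i^*$ gives $\partial_i(e^\lambda \eta) = e^\lambda \partial_i(\eta)$.

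For part~(b), the crucial observation is that $s_i^L$ fixes the image of $K_G(G/P) \to K_T(G/P)$. Indeed, if $n_{s_i} \in N_G(T)$ represents $s_i$, then for any $G$-equivariant vector bundle $E \to G/P$ the $G$-action supplies a canonical isomorphism $\Phi_{s_i}^* E \simeq E$, so $s_i^L[E] = [E]$ for every $[E]$ coming from $K_G(G/P)$. Since $e^{\alpha_i}$ and $(1-e^{\alpha_i})^{-1}$ are scalars in (a localization of) $K_T(pt)$, the formula \eqref{E:KleftDem} then gives $\delta_i(\kappa \eta) = \kappa \delta_i(\eta)$ for $\kappa \in K_G(G/P)$. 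This is the only non-mechanical step in the lemma, and it is the step I would expect to need to justify most carefully.

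For part~(c), all three commutation identities reduce to commutation of their building blocks. The relation $\delta_i s_j^R = s_j^R \delta_i$ holds because $s_j^R$ is $K_T(pt)$-linear (Proposition~\ref{prop:Kprop}(a)) and commutes with $s_i^L$ (Proposition~\ref{prop:Kprop}(c)), so it passes through every factor in the definition of $\delta_i$. The relation $\partial_i s_j^L = s_j^L \partial_i$ is immediate from Proposition~\ref{prop:Kprop}(d) applied to $\pi_i$. Finally, $\delta_i \partial_j = \partial_j \delta_i$ follows by combining the previous two facts: $\partial_j$ commutes with $s_i^L$, and by part~(a) it commutes with multiplication by $e^{\alpha_i}$ and by $(1-e^{\alpha_i})^{-1}$.

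For part~(d), the Leibniz rule is a direct algebraic identity. Since $s_i^L$ is a ring automorphism (Proposition~\ref{prop:Kprop}(b)),
\[
\delta_i(ab) \;=\; \frac{ab - e^{\alpha_i} s_i^L(a)\, s_i^L(b)}{1-e^{\alpha_i}}.
\]
Adding and subtracting $e^{\alpha_i} s_i^L(a)\, b$ in the numerator and regrouping produces exactly $\delta_i(a) \cdot b + e^{\alpha_i} s_i^L(a) \cdot \delta_i(b) - e^{\alpha_i} s_i^L(a) \cdot s_i^L(b)$. No further input is needed once $s_i^L$ is known to be multiplicative, so this step is essentially bookkeeping.
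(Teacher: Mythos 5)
Your proof is correct and follows the same route as the paper, which (citing the cohomological version and Proposition \ref{prop:Kprop}) leaves all four parts as ``immediate calculations''; you have simply written out the details. Your identification of the key fact in part~(b) --- that $s_i^L$ acts trivially on classes pulled back from $K_G(G/P)$, because the $G$-equivariant structure on a bundle supplies a canonical $T$-equivariant (up to the $\varphi_{s_i}$-twist) isomorphism $\Phi_{s_i}^*E \simeq E$ --- is indeed the one non-formal ingredient, and it is implicitly what makes the paper's appeal to Proposition~\ref{prop:Kprop}(b) work.
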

\begin{proof} The proofs for (a),(b), (c) are the same as the ones from cohomology, using now Proposition \ref{prop:Kprop}. Part (d) is an easy calculation.
\end{proof}  
We now turn to the action of the left operators on Schubert classes.
\begin{prop}\label{prop:Kaction} (a) 
For any simple root $\alpha_i$ and $w\in W$, the following holds in $K_T(G/B)$:
\[ s_i^L (\cO_w)= \begin{cases} e^{-\alpha_i}\cO_w+(1-e^{-\alpha_i})\cO_{s_i w} & \textrm{if } s_i w> w\/; \\ \cO_w & \textrm{otherwise} \/. \end{cases} \]

(b) Let $P$ any parabolic subgroup. Then in $K_T(G/P)$,  
\[\delta_i (\cO_{wW_P})= \begin{cases} \cO_{s_i w W_P} & \textrm{if } s_i w> w\/; \\ \cO_{w W_P} & \textrm{otherwise} \/. \end{cases} \]
\end{prop}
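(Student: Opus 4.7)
The strategy is to establish part (a) first for the full flag variety $G/B$, then deduce (b) in $G/B$ algebraically, and finally pass to the parabolic case by push-forward along $\pi\colon G/B\to G/P$.

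For (a), the main reduction is via the formula $\cO_w = \partial_{w^{-1}}(\cO_{id})$, which follows from \eqref{E:demschub}, together with the commutation $s_i^L \partial_{w^{-1}} = \partial_{w^{-1}} s_i^L$ of Lemma \ref{lemma:Kalg}(c). Since the identification $X_{id} = \{e_{id}\}$ gives $\cO_{id} = \iota_{id}$, Proposition \ref{prop:Kprop}(f) yields $s_i^L(\cO_{id}) = \iota_{s_i}$. The heart of the argument is then the identity
\[ \iota_{s_i} \;=\; e^{-\alpha_i}\,\iota_{id} + (1-e^{-\alpha_i})\,\cO_{s_i} \qquad \text{in } K_T(G/B), \]
which I would verify by localization at every $T$-fixed point $e_u$: both sides vanish unless $u \in \{id,s_i\}$, and at these two points the equality boils down to the standard formulas $\iota_u|_u = \prod_{\alpha>0}(1-e^{u(\alpha)})$ together with $\cO_{s_i}|_{id} = \cO_{s_i}|_{s_i} = \prod_{\alpha>0,\,\alpha\neq\alpha_i}(1-e^{\alpha})$ (the latter obtained from $\cO_{s_i} = \pi_i^*\pi_{i*}\cO_{id}$, using that both $e_{id}$ and $e_{s_i}$ map to $\pi_i(e_{id}) \in G/P_i$).

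Applying the $K_T(pt)$-linear operator $\partial_{w^{-1}}$ to this identity produces
\[ s_i^L(\cO_w) \;=\; e^{-\alpha_i}\cO_w + (1-e^{-\alpha_i})\,\partial_{w^{-1}}(\cO_{s_i}), \]
so all that remains is to evaluate $\partial_{w^{-1}}(\cO_{s_i}) = \partial_{w^{-1}}\partial_i(\cO_{id})$ by the composition rule for $\partial$'s. If $s_iw > w$, then $\ell(w^{-1}s_i) = \ell(w^{-1})+1$, so this equals $\partial_{(s_iw)^{-1}}(\cO_{id}) = \cO_{s_iw}$, matching the first case of (a). If $s_iw < w$, write $w^{-1} = v^{-1}s_i$ with $v := s_iw$; then $\partial_i(\cO_{s_i}) = \cO_{s_i}$ by \eqref{E:demschub}, so $\partial_{w^{-1}}(\cO_{s_i}) = \partial_{v^{-1}}(\cO_{s_i})$, and applying the previous sub-case to $v$ (noting $s_iv = w > v$) gives $\partial_{v^{-1}}(\cO_{s_i}) = \cO_{s_iv} = \cO_w$; substitution yields $s_i^L(\cO_w) = \cO_w$.

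Part (b) for $G/B$ then follows by substituting (a) into the definition $\delta_i = (1-e^{\alpha_i})^{-1}(1 - e^{\alpha_i} s_i^L)$ and simplifying: in the case $s_iw > w$ the $\cO_w$ contributions cancel and leave $\cO_{s_iw}$, while in the case $s_iw < w$ they collapse to $\cO_w$. For arbitrary $G/P$, take $w \in W^P$; by Lemma \ref{lemma:proj}, $\cO_{wW_P} = \pi_*\cO_w$, and Proposition \ref{prop:Kprop}(d) together with the definition \eqref{E:KleftDem} gives $\delta_i\pi_* = \pi_*\delta_i$. Applying $\pi_*$ to the $G/B$ case of (b), and invoking Lemma \ref{lemma:proj} once more (with the convention that $\cO_{s_iwW_P}$ denotes the Schubert class indexed by the coset $s_iwW_P$, independently of whether $s_iw$ is its minimal representative), completes the argument. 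The main obstacle is the key localization identity for $\iota_{s_i}$; the rest is bookkeeping with commutation relations and Demazure compositions.
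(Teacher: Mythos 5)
Your proof is correct and follows essentially the same route as the paper's: reduce to $G/B$ via $\cO_w=\partial_{w^{-1}}(\cO_{id})$ and the commutation $s_i^L\partial_{w^{-1}}=\partial_{w^{-1}}s_i^L$, verify the key identity $\iota_{s_i}=e^{-\alpha_i}\iota_{id}+(1-e^{-\alpha_i})\cO_{s_i}$ by localization, apply $\partial_{w^{-1}}$, derive (b) algebraically from (a), and push forward to $G/P$. You fill in the two steps the paper leaves implicit (the fixed-point check and the case analysis for $\partial_{w^{-1}}(\cO_{s_i})$), and both are carried out correctly.
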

\begin{remark}\label{rmk:dualDem}
Conjugating by $w_0^L$, we obtain similar result for the opposite Schubert classes: \[\delta_i^\vee (\cO^{wW_P})= \begin{cases} \cO^{s_i w W_P} & \textrm{if } s_i w< w\/; \\ \cO^{w W_P} & \textrm{otherwise} \/. \end{cases} \]
\end{remark}
\begin{proof}[Proof of Proposition \ref{prop:Kaction}] We utilize that $s_i^L\partial_w=\partial_w s_i^L$ and $\cO_w=\partial_{w^{-1}}(\cO_{id})$ to obtain that 
\[ s_i^L (\cO_w)=s_i^L\partial_{w^{-1}}(\cO_{id}) =\partial_{w^{-1}}s_i^L(\cO_{id})  =\partial_{w^{-1}}(\iota_{s_i}) \/. \] 
One may use localization to prove that $\iota_{s_i}=(1-e^{-\alpha_i})\cO_{s_i}+e^{-\alpha_i}\cO_{id}$, and from this we deduce 
\[ \partial_{w^{-1}}(\iota_{s_i}) =  \begin{cases} e^{-\alpha_i}\cO_w+(1-e^{-\alpha_i})\cO_{s_i w} & \textrm{if } s_i w> w\/; \\ \cO_w & \textrm{otherwise} \/. \end{cases} \/. \]
This proves (a). If $P=B$, part (b) follows from part (a) and the definition of $\delta_i$. For arbitrary parabolic $P$, by Lemma \ref{lemma:proj},
$\pi_* (\cO_w)=\cO_{wW_P}$, thus 
\[ \delta_i(\cO_{wW_P}) = \delta_i ( \pi_* (\cO_w)) = \pi_* \delta_i (\cO_w) \/. \] The result follows from the case when $P=B$.\end{proof}

\begin{remark} For another definition of the left and right Demazure operators on $G/B$ using convolutions, see Proposition \ref{prop:convos} below. 
\end{remark}

\subsection{Left and right Demazure-Lusztig operators}\label{ss:leftDL} Recall the definition of the {\em right} Demazure-Lusztig (DL) operators \cite{AMSS:motivic} 
\[ \mathcal{T}_i^R = (1+ y \mathcal{L}_{\alpha_i}) \partial_i -id; \quad \mathcal{T}_i^{R,\vee} = \partial_i(1+y \mathcal{L}_{\alpha_i}) -id \/.\] 
They satisfy the braid relation and the quadratic relation 
\[(\mathcal{T}^R_i+1)(\mathcal{T}^R_i+y)=(\mathcal{T}^{R,\vee}_i+1)(\mathcal{T}^{R,\vee}_i+y)=0.\]
The operator $\mathcal{T}_i^{R, \vee}$ appeared in \cite[Eq. (4.2)]{lusztig:eqK}, and $\mathcal{T}_i^R$ in \cite{lee.lenart.liu:whittaker,brubaker.bump.licata}.

The {\em left} DL operators are defined in the following way
\[ \mathcal{T}_i^L: = \delta_i (1+ye^{\alpha_i}) - id=\frac{1+ye^{-\alpha_i}}{1-e^{-\alpha_i}}s_i^L- \frac{1+y}{1-e^{-\alpha_i}}\/; \]
\[\mathcal{T}_i^{L,\vee}:=\delta_i^\vee(1+ye^{-\alpha_i})-id=\frac{1+ye^{\alpha_i}}{1-e^{\alpha_i}}s_i^L- \frac{1+y}{1-e^{\alpha_i}}.\]
\noindent Observe that $w_0^L\mathcal{T}_{\alpha_i}^L w_0^L=\mathcal{T}_{-w_0\alpha_i}^{L,\vee}$. Both the operators $\mathcal{T}_i^L$ and 
$\mathcal{T}_i^{L,\vee}$ satisfy the quadratic relation $(\mathcal{T}_i^{L}+1)(\mathcal{T}_i^{L}+y)=0$ (and same for $\mathcal{T}_i^{L,\vee}$) and the braid relations. In particular, 
they are invertible. 

\begin{remark} Under the identification by the Atiyah-Borel isomorphism $K_T(pt) \simeq K_G(G/B)$ defined by 
$e^\lambda \mapsto \mathcal{L}_{-\lambda} = G \times^B \C_{-\lambda}$, the operator 
$\mathcal{T}_i^{L,\vee}$ appeared in \cite[Equation (8.1)]{lusztig:eqK}, where the quadratic and braid relations are proved.
For partial flag manifolds in Lie type A, the operator $\mathcal{T}_i^L$ also appeared in \cite[eq.~(9.1)]{RTV:Kstable}.\end{remark}

As usual, the left operators are well defined as endomorphisms of $K_T(G/P)$, where $P$ is an arbitrary parabolic group. 

\begin{lemma}\label{lemma:Kcomm} The following properties hold.

(a) For any $i,j$, and as operators in $K_T(G/B)$: \[ s_j^L \mathcal{T}^R_i = \mathcal{T}^R_i s_j^L \/; \quad s_j^L \mathcal{T}^{R,\vee}_i = \mathcal{T}^{R,\vee}_i s_j^L \/; \quad \mathcal{T}^R_i T^L_j = \mathcal{T}^L_j \mathcal{T}^R_i,\quad \mathcal{T}^{R,\vee}_i T^{L,\vee}_j = \mathcal{T}^{L,\vee}_j \mathcal{T}^{R,\vee}_i;\]

(b) If $\kappa \in K_G(G/P)$ and $\gamma \in K_T(G/P)$, then \[ \mathcal{T}_i^L(\kappa \cdot \gamma) = \kappa \cdot \mathcal{T}_i^L(\gamma) \/; \quad \mathcal{T}_i^{L,\vee}(\kappa \cdot \gamma) = \kappa \cdot \mathcal{T}_i^{L,\vee}(\gamma) \]

(c) For any $\gamma_1,\gamma_2\in K_T(G/B)$, and any $\gamma_3, \gamma_4 \in K_T(G/P)$, 
\[ \langle \mathcal{T}_i^{R} (\gamma_1),\gamma_2\rangle= \langle\gamma_1,\mathcal{T}_i^{R,\vee} (\gamma_2)\rangle \/; \quad \langle \mathcal{T}_i^{L} (\gamma_3),\gamma_4\rangle=s_i . \langle\gamma_3,\mathcal{T}_i^{L,\vee} (\gamma_4)\rangle \/.\]

(d) (Leibniz formula) {For any $a,b \in K_T(G/P)$, \[ \mathcal{T}_i^L(a\cdot b) = \mathcal{T}_i^L(a) \cdot b + s_i^L(a) \cdot \mathcal{T}_i^L(b) + y s_i^L(a) \cdot b \/. \]}
\end{lemma}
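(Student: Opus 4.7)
The plan is to verify each part by reducing to properties of the building blocks of $\mathcal{T}_i^L$ and $\mathcal{T}_j^R$: the left and right Weyl group actions $s_i^L, s_j^R$, the left and right Demazure operators $\delta_i, \partial_j$, scalar multiplication by elements of $K_T(\mathrm{pt})$, and multiplication by the $G$-equivariant line bundle $\mathcal{L}_{\alpha_j}$. Since $\mathcal{T}_i^L = \delta_i(1+ye^{\alpha_i})-\mathrm{id}$ uses only ``left'' ingredients plus $K_T(\mathrm{pt})$-scalars, whereas $\mathcal{T}_j^R = (1+y\mathcal{L}_{\alpha_j})\partial_j-\mathrm{id}$ uses only ``right'' ingredients plus $G$-equivariant classes, the four statements will follow from the pairwise compatibility of these pieces already established in Proposition \ref{prop:Kprop} and Lemma \ref{lemma:Kalg}.

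For parts (a) and (b) the strategy is to verify commutation piece by piece. Proposition \ref{prop:Kprop}(c) gives $[s_i^L,s_j^R]=0$; Lemma \ref{lemma:Kalg}(c) gives $[\delta_i,s_j^R]=[s_i^L,\partial_j]=[\delta_i,\partial_j]=0$; the scalar $e^{\alpha_i}$ commutes with $s_j^R$ and with $\partial_j$ by their $K_T(\mathrm{pt})$-linearity (Proposition \ref{prop:Kprop}(a) and Lemma \ref{lemma:Kalg}(a)); and multiplication by $\mathcal{L}_{\alpha_j}$ commutes with $s_i^L$ (which fixes $G$-equivariant classes) and with $\delta_i$ (by the $K_G(G/P)$-linearity in Lemma \ref{lemma:Kalg}(b)). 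Combining these yields $[s_j^L,\mathcal{T}_i^R]=0$ and $[\mathcal{T}_i^L,\mathcal{T}_j^R]=0$, and identical reasoning handles the dual versions. Part (b) then follows at once: since $\kappa\in K_G(G/P)$ commutes with the scalar $1+ye^{\alpha_i}$ in the commutative ring $K_T(G/P)$, pulling $\kappa$ through $\delta_i$ via Lemma \ref{lemma:Kalg}(b) gives $\mathcal{T}_i^L(\kappa\gamma)=\kappa\,\mathcal{T}_i^L(\gamma)$, and the same argument with $\delta_i^\vee$ handles $\mathcal{T}_i^{L,\vee}$.

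For the right adjointness in part (c), the projection formula makes $\partial_i=\pi_i^*\pi_{i*}$ self-adjoint under the Euler-characteristic pairing, since $\langle\partial_i\gamma_1,\gamma_2\rangle$ and $\langle\gamma_1,\partial_i\gamma_2\rangle$ both equal $\langle\pi_{i*}\gamma_1,\pi_{i*}\gamma_2\rangle_{G/P_i}$; multiplication by the class $1+y\mathcal{L}_{\alpha_i}$ passes freely across the pairing, so the chain $\langle\mathcal{T}_i^R\gamma_1,\gamma_2\rangle = \langle\partial_i\gamma_1,(1+y\mathcal{L}_{\alpha_i})\gamma_2\rangle-\langle\gamma_1,\gamma_2\rangle = \langle\gamma_1,\mathcal{T}_i^{R,\vee}\gamma_2\rangle$ closes up directly. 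For the left adjointness I would start from the consequence $\langle s_i^L\gamma_3,\gamma_4\rangle = s_i\cdot\langle\gamma_3,s_i^L\gamma_4\rangle$ of Proposition \ref{prop:Kprop}(e) (substitute $s_i^L\gamma_4$ for $\gamma_4$), and then exploit the observation that $s_i$ interchanges the coefficients of $\mathcal{T}_i^L$ and $\mathcal{T}_i^{L,\vee}$: $s_i\bigl(\tfrac{1+ye^{-\alpha_i}}{1-e^{-\alpha_i}}\bigr)=\tfrac{1+ye^{\alpha_i}}{1-e^{\alpha_i}}$ and $s_i\bigl(\tfrac{1+y}{1-e^{-\alpha_i}}\bigr)=\tfrac{1+y}{1-e^{\alpha_i}}$. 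Expanding both sides of the desired identity in terms of $\langle\gamma_3,\gamma_4\rangle$ and $\langle\gamma_3,s_i^L\gamma_4\rangle$, and matching coefficients using these two $s_i$-transforms, produces the claim.

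For part (d) the direct computation is short: write $A=\tfrac{1+ye^{-\alpha_i}}{1-e^{-\alpha_i}}$ and $B=\tfrac{1+y}{1-e^{-\alpha_i}}$ so that $\mathcal{T}_i^L = As_i^L-B$, and observe $A-B=-y$. Using that $s_i^L$ is a ring automorphism (Proposition \ref{prop:Kprop}(b)), expand $\mathcal{T}_i^L(ab)=As_i^L(a)s_i^L(b)-Bab$ and $\mathcal{T}_i^L(a)\cdot b + s_i^L(a)\cdot\mathcal{T}_i^L(b) = As_i^L(a)s_i^L(b)+(A-B)s_i^L(a)b-Bab$; the discrepancy is $-(A-B)s_i^L(a)b = ys_i^L(a)b$, which is exactly the correction term in the Leibniz rule. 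The main obstacle, as in the cohomological analogue, is expected to be the bookkeeping in part (c) for the left adjointness, where the interaction between the non-$W$-invariance of the Poincar{\'e} pairing and the non-$K_T(\mathrm{pt})$-linearity of $s_i^L$ forces the $s_i$-twist on the right-hand side; the remaining identities amount to unpackings of the definitions.
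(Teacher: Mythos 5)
Your treatment of parts (a), (b), and (d) is correct and matches what the paper intends: commute the building blocks pairwise ($s_i^L$ with $s_j^R$, $\delta_i$, $\partial_j$, with $K_T(\mathrm{pt})$-scalars, and with the $G$-equivariant class $\mathcal{L}_{\alpha_j}$), and the Leibniz computation in (d), including the identity $A-B=-y$, is exactly right. Your direct proof of the first identity in (c), via the self-adjointness $\langle\partial_i\gamma_1,\gamma_2\rangle=\langle\pi_{i*}\gamma_1,\pi_{i*}\gamma_2\rangle_{G/P_i}=\langle\gamma_1,\partial_i\gamma_2\rangle$ from the projection formula, is a clean self-contained argument where the paper merely cites \cite[Lemma 3.3]{AMSS:motivic}.

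The plan for the second identity in (c), however, does not actually close. Writing $A=\tfrac{1+ye^{-\alpha_i}}{1-e^{-\alpha_i}}$, $B=\tfrac{1+y}{1-e^{-\alpha_i}}$, $A'=s_i(A)$, $B'=s_i(B)$, so $\mathcal{T}_i^L=As_i^L-B$ and $\mathcal{T}_i^{L,\vee}=A's_i^L-B'$, and using $\langle s_i^L\gamma_3,\gamma_4\rangle=s_i.\langle\gamma_3,s_i^L\gamma_4\rangle$, one obtains
\begin{align*}
\langle\mathcal{T}_i^L(\gamma_3),\gamma_4\rangle &= A\cdot s_i.\langle\gamma_3,s_i^L\gamma_4\rangle - B\,\langle\gamma_3,\gamma_4\rangle,\\
s_i.\langle\gamma_3,\mathcal{T}_i^{L,\vee}(\gamma_4)\rangle &= A\cdot s_i.\langle\gamma_3,s_i^L\gamma_4\rangle - B\cdot s_i.\langle\gamma_3,\gamma_4\rangle.
\end{align*}
The coefficients of $s_i.\langle\gamma_3,s_i^L\gamma_4\rangle$ agree, but the remaining terms differ by $B\bigl(s_i.\langle\gamma_3,\gamma_4\rangle-\langle\gamma_3,\gamma_4\rangle\bigr)$, and the Euler-characteristic pairing is not $W$-invariant in general. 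Concretely, already for $G/B=\bP^1$ with $\gamma_3=\gamma_4=\iota_{\mathrm{id}}$ one finds $\langle\iota_{\mathrm{id}},\iota_{\mathrm{id}}\rangle=1-e^{\alpha_1}$, so the two displayed quantities are $(1+y)e^{\alpha_1}$ and $-(1+y)$ respectively, which are unequal. So \emph{matching coefficients using the $s_i$-transforms does not produce the claim as stated}; a residual correction term remains. Note that this is not really your fault: the identity seems to require either this correction or the hypothesis that $\langle\gamma_3,\gamma_4\rangle$ be $s_i$-invariant — a hypothesis that does hold in the paper's applications (e.g.\ to $SMC_y(X^{wW_P,\circ})$ paired against $MC_y(X_{uW_P}^\circ)$, where the pairing is $\delta_{u,w}$). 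In a blind write-up you must actually carry out this bookkeeping and either record the residual term or identify the extra hypothesis, rather than assert that the coefficients cancel.
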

\begin{proof} Parts (a) and (b) follow from the previous commutativity properties, using that the line bundle class $\mathcal{L}_{\alpha_i}$ is $G$-equivariant, thus its class is fixed under the left $W$-action. The first equality in (c) was proved in \cite[Lemma 3.3]{AMSS:motivic}, and the second equality and part (d) are straightforward calculations.\end{proof}

\section{Motivic Chern classes}
The K-theoretical generalization of the CSM classes are the motivic Chern classes defined by Brasselet, Sch{\"u}rmann and 
Yokura \cite{brasselet.schurmann.yokura:hirzebruch}. In this section we recall their definition and prove some basic properties 
about pull backs and push forwards.
\subsection{Definition} Let $X$ be a quasi-projective complex $T$-variety. The (equivariant) Grothendieck motivic group ${G}_0^T(var/X)$ is the free abelian group generated by classes $[f: Z \to X]$ where $Z$ is a quasi-projective $T$-variety and $f: Z \to X$ is a $T$-equivariant morphism, modulo the usual additivity relations 
\[[f: Z \to X] = [f: U \to X] + [f:Z \setminus U \to X]\] for $U \subset Z$ an open $T$-invariant subvariety.
If $X=pt$ then ${G}_0^T(var/pt)$ is a ring with the product given by the external product of morphisms, and the groups ${G}_0^T(var/X)$ also acquire by the external product a module structure over $G_0^T(var/pt)$. 

For any equivariant morphism $f:X \to Y$ of quasi-projective $T$-varieties there are well defined push-forwards $f_!: G_0^T(var/X) \to G_0^T(var/Y)$ (given by composition) and pull-backs $f^*:G_0^T(var/Y) \to G_0^T(var/X)$ (given by fiber product); see \cite[\S 6]{bittner:universal}.
The following theorem was proved by Brasselet, Sch{\"u}rmann and Yokura \cite[Thm. 2.1]{brasselet.schurmann.yokura:hirzebruch} in the non-equivariant case. The changes required to prove the equivariant case were addressed in \cite{feher2021motivic} and \cite{AMSS:motivic}. 

\begin{thm}\label{thm:existence}\cite[Theorem 4.2]{AMSS:motivic} Let $X$ be a quasi-projective, non-singular, complex algebraic variety with an action of the torus $T$. There exists a unique natural transformation $MC_y: G_0^T(var/X) \to K_T(X)[y]$ satisfying the following properties:
\begin{enumerate} \item[(1)] It is functorial with respect to $T$-equivariant proper morphisms of non-singular, quasi-projective varieties. 
\item[(2)] It satisfies the normalization condition \[ MC_y[id_X: X \to X] = \lambda_y(T^*_X) = \sum y^i [\wedge^i T^*_X]_T \in K_T(X)[y] \/. \]
\end{enumerate}
Further, the transformation $MC_y$ satisfies the following Verdier-Riemann-Roch (VRR) formula.
For any smooth, $T$-equivariant morphism $\pi: X \to Y$ of quasi-projective and non-singular 
algebraic varieties, and any $[f: Z \to Y] \in G_0^T(var/Y)$,
\[ \lambda_y(T^*_\pi) \cdot \pi^* MC_y[f:Z \to Y] = MC_y[\pi^* f:Z \times_Y X \to X] \/. \]
\end{thm}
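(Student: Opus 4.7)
My plan is to reduce the equivariant statement to the non-equivariant one of Brasselet--Schürmann--Yokura \cite{brasselet.schurmann.yokura:hirzebruch}, using the Totaro/Edidin--Graham algebraic approximation of the Borel construction, and then to deduce the VRR formula from functoriality together with the behavior of $\lambda_y$ on cotangent bundles.

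For \textbf{uniqueness}, the first step is to invoke Bittner's presentation \cite{bittner:universal} of the motivic Grothendieck group: equivariantly, $G_0^T(var/X)$ is generated by classes $[f\colon Z\to X]$ with $Z$ smooth quasi-projective and $f$ a $T$-equivariant proper morphism, modulo the scissor and blow-up relations. Given such a generator, functoriality under proper morphisms and the normalization axiom force
\[ MC_y[f\colon Z\to X] \;=\; f_{*}\bigl(\lambda_y(T^{*}_{Z})\bigr) \;\in\; K_T(X)[y]. \]
This pins down $MC_y$ on the generators. The nontrivial content, already present in the non-equivariant case, is to verify that the right-hand side respects the scissor relations for locally closed decompositions; this is exactly what BSY establish.

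For \textbf{existence} in the equivariant setting, I would use the algebraic approximation of $ET$. Fix a sequence of $T$-representations $V_n$ and $T$-invariant open subsets $U_n\subset V_n$ on which $T$ acts freely, with $\mathrm{codim}(V_n\setminus U_n)\to\infty$. Form the mixed spaces $X_n := X\times^T U_n$; these are smooth quasi-projective varieties, and $K_T(X)$ is computed as the inverse limit of the ordinary $K(X_n)$ up to sufficiently large degree (resp.~$y$-degree). Applying the non-equivariant BSY transformation produces classes $MC_y[f_n\colon Z\times^T U_n \to X_n]\in K(X_n)[y]$, and the quotient $U_{n+1}\to U_{n+1}/T$ restricted over $U_n/T$ is a smooth morphism, so the non-equivariant VRR of BSY guarantees compatibility under the transition maps (the relative cotangent is trivial in the limit). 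This yields a well-defined class $MC_y[f\colon Z\to X]\in K_T(X)[y]$, independent of the choice of approximation. Naturality under $T$-equivariant proper morphisms, additivity, and normalization are all inherited from the non-equivariant theorem applied levelwise. The main obstacle here is checking that the construction is independent of the approximating system $(V_n,U_n)$; this is done by the standard double-fibration argument used in \cite{edidin.graham:eqchow}, combined once more with the smooth-pull-back compatibility of BSY.

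Finally, for the \textbf{VRR formula}, I would reduce to a generator $[f\colon Z\to Y]$ with $f$ proper and $Z$ smooth. The fiber product $Z\times_Y X\to X$ is proper, $Z\times_Y X$ is smooth (since $\pi$ is smooth), and one has a canonical short exact sequence of cotangent bundles
\[ 0 \longrightarrow (\mathrm{pr}_2)^{*}T^{*}_{\pi} \longrightarrow T^{*}_{Z\times_Y X} \longrightarrow (\mathrm{pr}_1)^{*}T^{*}_{Z} \longrightarrow 0. \]
Multiplicativity of $\lambda_y$ under short exact sequences, flat base change $\pi^{*}f_{*}=(\mathrm{pr}_2)_{*}(\mathrm{pr}_1)^{*}$ in $K_T$, and the projection formula then combine to give
\[ \lambda_y(T^{*}_{\pi})\cdot \pi^{*}f_{*}\lambda_y(T^{*}_{Z}) \;=\; (\mathrm{pr}_2)_{*}\lambda_y(T^{*}_{Z\times_Y X}), \]
which is precisely the asserted VRR identity on generators; it extends to all of $G_0^T(var/Y)$ by additivity. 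I expect this last step to be routine; the real work is in the limiting argument for existence.
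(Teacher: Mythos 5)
The paper itself does not prove this theorem---it is cited from \cite{feher2018motivic} and \cite{AMSS:motivic}, which establish the equivariant version by working directly with the equivariant Bittner presentation of $G_0^T(var/X)$ (made available by equivariant resolution of singularities and equivariant weak factorization). Your uniqueness discussion is essentially the approach those references take: on generators $[f\colon Z\to X]$ with $Z$ smooth and $f$ equivariant proper, the axioms force $MC_y[f]=f_*\bigl(\lambda_y(T^*_Z)\bigr)$, and the work is to verify that this assignment respects the \emph{blow-up} relations (not the scissor relations, which are the defining relations of $G_0^T$ but are not the relations in Bittner's smooth-proper presentation). This verification simultaneously yields existence. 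Your reduction of VRR to generators via the relative cotangent sequence, multiplicativity of $\lambda_y$, flat base change, and the projection formula is also the standard and correct argument.

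The gap is in your proposed existence argument via the Edidin--Graham approximation $X_n=X\times^T U_n$. In equivariant Chow theory this works because $A^k_T(X)$ stabilizes: for fixed $k$ one has $A^k_T(X)\cong A^k(X_n)$ once $\operatorname{codim}(V_n\setminus U_n)$ is large enough. Equivariant $K$-theory has no analogous degree filtration that stabilizes. By the Atiyah--Segal/Edidin--Graham completion theorem, $\varprojlim K(X_n)$ computes the $I$-adic completion $K_T(X)^{\wedge}_I$ at the augmentation ideal, not $K_T(X)$ itself---already for $X=pt$ and $T=\mathbb{C}^*$ one has $K_T(pt)=\mathbb{Z}[t^{\pm 1}]$, while $\varprojlim K(\mathbb{P}^n)=\mathbb{Z}[[t-1]]$. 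So the inverse-limit construction produces a class only in the completion, and the hedge ``up to sufficiently large degree (resp.~$y$-degree)'' does not rescue it: there is no degree in which $K_T(X)\to K(X_n)$ becomes an isomorphism. The escape is exactly the argument you give under ``uniqueness'': define $MC_y[f]=f_*(\lambda_y(T^*_Z))$ directly in $K_T(X)[y]$ for smooth $Z$ and equivariant proper $f$, and check well-definedness using the equivariant Bittner presentation. That is the construction actually used in the cited sources; the Borel approximation is not needed and, as written, would not land in the correct target.
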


If one forgets the $T$-action, then the equivariant motivic Chern class above recovers the non-equivariant motivic Chern class from
\cite{brasselet.schurmann.yokura:hirzebruch} (either by its construction, or by the properties (1)-(2) from Theorem~\ref{thm:existence}
and the corresponding results from \cite{brasselet.schurmann.yokura:hirzebruch}.) Most of the time the variety $X$ will be understood from the context. If $Y \subset X$ is a subvariety, not necessarily closed, denote by \[ MC_y(Y) := MC_y[Y \hookrightarrow X] \/. \] If $i: Y \subset X$ is closed submanifold and $Y' \subset Y$ then by functoriality 
$MC_y[Y' \hookrightarrow X] = i_* MC_y [Y' \hookrightarrow Y]$ (K-theoretic push-forward). 

\subsection{Pull backs and motivic Segre classes}
Let $X$ be a quasiprojective complex manifold. Recall the Serre duality functor $\calD(-):=\RHom(-,\omega_{X}^\bullet)$ on $K_T(X)$, where 
$\omega_{X}^\bullet=[\wedge^{\dim X}T^*_X][\dim X]$ is the canonical complex; we extend $\calD$ to $K_T(X)[y^{\pm1}]$ by $\calD(y^i)=y^{-i}$. 
\begin{defn}\label{defn:smc} Let $\Omega \subset X$ be a pure dimensional $T$-stable subvariety of $X$. 
The {\em Segre motivic class} $SMC_y(\Omega) \in K_T(X)$ is the class 
\[ SMC_y(\Omega) := (-y)^{\dim \Omega} \frac{\caD(MC_y(\Omega))}{\lambda_y(T^*_X)}\in K_T(X)[[y^{\pm 1}]]  \/. \] 
(This class lives in an appropriate completion of $K_T(X)[[y^{\pm 1}]]$. The normalization factor $(-y)^{\dim \Omega}$ will simplify the formulae below for pull-back and Poincar{\'e} duality.)
\end{defn}
We prove next a result about pull-backs of motivic Segre classes, to be used later.
\begin{prop}\label{prop:gensegrepb} Let $\pi: X \to Y$ be a smooth equivariant morphism of pure-dimensional quasi-projective manifolds, and let $\Omega \subset Y$ be a $T$-stable pure-dimensional subvariety. Then \[\pi^*SMC_y(\Omega) = SMC_y(\pi^{-1} \Omega) \in K_T(X) [[y^{ \pm 1}]] \/.\]
\end{prop}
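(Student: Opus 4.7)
The plan is to reduce the identity to a direct computation combining the Verdier--Riemann--Roch (VRR) formula from Theorem \ref{thm:existence} with the behavior of Serre duality under smooth pullback. Set $d := \dim X - \dim Y$ for the relative dimension of $\pi$, so $\dim \pi^{-1}\Omega = \dim \Omega + d$, and recall that the short exact sequence $0 \to \pi^* T^*_Y \to T^*_X \to T^*_\pi \to 0$ of cotangent bundles for a smooth morphism gives the multiplicativity $\lambda_y(T^*_X) = \pi^*\lambda_y(T^*_Y) \cdot \lambda_y(T^*_\pi)$. First I would apply VRR to the smooth morphism $\pi$ and the class $[\Omega \hookrightarrow Y]$, obtaining
\[
MC_y(\pi^{-1}\Omega) \;=\; \lambda_y(T^*_\pi) \cdot \pi^*MC_y(\Omega),
\]
so that a factor of $\lambda_y(T^*_\pi)$ appears in both the numerator and the denominator of $SMC_y(\pi^{-1}\Omega)$ and ultimately cancels against one copy of $\lambda_y(T^*_\pi)$ coming from $\lambda_y(T^*_X)$.

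The main technical input is the commutation of $\mathcal{D}$ with $\pi^*$ in K-theory. Since $\pi$ is smooth of relative dimension $d$ we have $\omega_X^\bullet \cong \pi^*\omega_Y^\bullet \otimes \omega_\pi[d]$, and checking on a vector bundle class (then extending to $K_T(Y)[y^{\pm 1}]$ via $\mathcal{D}(y) = y^{-1}$) yields the K-theoretic identity
\[
\mathcal{D}_X(\pi^*\alpha) \;=\; (-1)^d\, \det T^*_\pi \cdot \pi^*\mathcal{D}_Y(\alpha).
\]
In addition I would use the product formula $\mathcal{D}(F \cdot G) = F^\vee \cdot \mathcal{D}(G)$ for $F$ a (Laurent polynomial in $y$ with vector bundle coefficients), coming from Hom-tensor adjunction, together with the elementary identity
\[
\lambda_y(E)^\vee \;=\; (\det E)^{-1}\, y^{-\operatorname{rk} E}\, \lambda_y(E)
\]
obtained from the perfect pairing $\wedge^i E \otimes \wedge^{\operatorname{rk} E - i} E \to \det E$. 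Applying these with $F = \lambda_y(T^*_\pi)$ and $G = \pi^* MC_y(\Omega)$, the two factors of $\det T^*_\pi$ cancel and the power of $y$ combines with the sign $(-1)^d$, giving
\[
\mathcal{D}\bigl(MC_y(\pi^{-1}\Omega)\bigr) \;=\; (-y)^{-d}\, \lambda_y(T^*_\pi) \cdot \pi^*\mathcal{D}\bigl(MC_y(\Omega)\bigr).
\]

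To finish, I would substitute this into Definition \ref{defn:smc}: dividing by $\lambda_y(T^*_X) = \pi^*\lambda_y(T^*_Y) \cdot \lambda_y(T^*_\pi)$ cancels the $\lambda_y(T^*_\pi)$ factor, and the normalization $(-y)^{\dim \Omega + d}$ in $SMC_y(\pi^{-1}\Omega)$ absorbs the remaining $(-y)^{-d}$, producing $\pi^*\bigl[(-y)^{\dim \Omega}\mathcal{D}(MC_y(\Omega))/\lambda_y(T^*_Y)\bigr] = \pi^* SMC_y(\Omega)$, as claimed. The hard part is not conceptual but rather bookkeeping: correctly tracking the interaction between the convention $\mathcal{D}(y) = y^{-1}$, the duals of $\lambda_y$-classes, and the sign and determinantal twist from the shift in the relative dualizing complex. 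The normalization factor $(-y)^{\dim \Omega}$ in Definition \ref{defn:smc} is precisely engineered so that all of these contributions balance.
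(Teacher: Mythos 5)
Your proof is correct and follows essentially the same approach as the paper: apply Verdier--Riemann--Roch to rewrite $MC_y(\pi^{-1}\Omega)$ as $\lambda_y(T^*_\pi)\cdot\pi^*MC_y(\Omega)$, use $\lambda_y(T^*_X)=\pi^*\lambda_y(T^*_Y)\cdot\lambda_y(T^*_\pi)$, and track the sign/determinantal contributions coming from $\omega_X^\bullet\simeq\pi^*\omega_Y^\bullet\otimes\omega_\pi[d]$ together with the $\lambda_y$-duality identity $\wedge^{\mathrm{top}}E^*\cdot\lambda_{y^{-1}}(E)=y^{-\mathrm{rk}\,E}\lambda_y(E^*)$. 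You package the bookkeeping into two standalone identities (commutation of $\mathcal{D}$ with $\pi^*$, and the formula for $\lambda_y(E)^\vee$) where the paper carries them out inline, but the ingredients and cancellations are the same.
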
 \begin{proof}By definition, 
\[ \pi^*SMC_y(\Omega) = (-y)^{\dim \Omega} \pi^* \frac{\caD(MC_y(\Omega))}{\lambda_y(T^*_Y)} = 
(-y)^{\dim \Omega} \frac{ \pi^*(MC_y(\Omega)^\vee \otimes \omega_{Y}^\bullet)}{\pi^*(\lambda_y(T^*_Y))} \/. \] 
Here by $[\mathcal{F}]^\vee$ we denoted the automorphism of $K_T(X)[y^{\pm1}]$ which takes the class of a vector bundle 
$[E]$ to its dual $[E^\vee]$, and it sends $y \mapsto y^{-1}$. For instance, it will take $\lambda_y(T^*_\pi)$ to 
$\lambda_{y^{-1}}(T_\pi)$. Same applies to coherent sheaves, by using a (finite) resolution by vector bundles. 

Now apply Verdier-Riemann-Roch from Thm. \ref{thm:existence} to the numerator to obtain 
\[ \pi^*(MC_y(\Omega)^\vee \otimes \omega_{Y}^\bullet) = \frac{(MC_y(\pi^{-1} \Omega))^\vee \otimes \pi^*(\omega_{Y}^\bullet)}{\lambda_{y^{-1}}(T_\pi)} 
= (-1)^{\dim Y}\frac{(MC_y(\pi^{-1} \Omega))^\vee \otimes \pi^*(\wedge^{top}T^*_{Y})}{\lambda_{y^{-1}}(T_\pi)} \]
We need to analyze the class 
\[ \frac{ \pi^*(\wedge^{top}T^*_{Y})}{\pi^*(\lambda_y(T^*_Y)) \lambda_{y^{-1}}(T_\pi)} 
= \frac{ \pi^*(\wedge^{top}T^*_{Y}) \lambda_y(T^*_\pi)}{\lambda_y(T^*_X) \lambda_{y^{-1}}(T_\pi)} 
= \frac{\omega_{X}}{\lambda_y(T^*_X)} \times \frac{\lambda_y(T^*_\pi)}{\wedge^{top}(T^*_\pi) \lambda_{y^{-1}}(T_\pi)} \/. \] 
We have the following general situation. For any vector space $V$ of dimension $d$ we have a pairing 
\[ \wedge^i V \otimes \wedge^{d-i} V \simeq \wedge^d V \Leftrightarrow \wedge^i V \otimes \wedge^d V^* \simeq \wedge^{d-i} V^* \/. \] 
This implies that \[ \begin{split} [\wedge^d V^*]  \lambda_{y^{-1}}(V)&  = [\wedge^d V^*] (1+ y^{-1} [V]+ \ldots + y^{-d} [\wedge^d V]) \\ & 
= [\wedge^d V^* + y^{-1} [\wedge^{d-1} V^*] + \ldots + y^{-d}\\ & = y^{-d} \lambda_y (V^*) \/. \end{split}\] 
Therefore \[ \frac{\lambda_y(T^*_\pi)}{\wedge^{top}(T^*_\pi) \lambda_{y^{-1}}(T_\pi)} = y^{\dim X - \dim Y} \/. \] 
But $\omega_{X}^\bullet = (-1)^{\dim X} \omega_{X}$, and $\dim \pi^{-1} (\Omega) = \dim \Omega + \dim X - \dim Y$ 
(because $\pi$ is smooth) and we combine all of the above to obtain 
\[(-y)^{\dim \Omega} \frac{ \pi^*(MC_y(\Omega)^\vee \otimes \omega_{Y}^\bullet)}{\pi^*(\lambda_y(T^*_Y))} 
= (-y)^{\dim \Omega+ \dim X - \dim Y} \frac{ MC_y(\pi^{-1}(\Omega))^\vee \otimes \omega_X^\bullet}{\lambda_y(T^*_X)}\/, \] 
which is the definition of the Segre class $SMC_y(\pi^{-1} \Omega)$.\end{proof}

\section{Motivic Chern classes of Schubert cells and left DL operators} 
In this section we prove that the left Demazure-Lusztig operators generate recursively the motivic Chern classes for Schubert cells. 
For the right operators, this was proved in \cite{AMSS:motivic}.

The equivariant motivic Chern classes of the Schubert cells $\{MC_y(X_w^\circ)\mid w\in W\}$ form a basis for the localized K-group $K_T(G/B)_{\loc}$.~These classes are equivalent (in a certain sense) to the K-theoretic version of the stable envelopes \cite{MR3752463,su2020k}, and may be used to determine certain Whittaker vectors, and the `standard basis' in the unramified principal series representation of $p$-adic groups \cite{MSA:whittaker, AMSS:motivic}.

\subsection{Poincar{\'e} duality} In this section we investigate the Poincar{\'e} duals of the motivic Chern classes in $G/P$ with respect to the K-theoretic pairing. The starting point is the following result for $G/B$, proved in \cite[Theorem 8.11]{AMSS:motivic}. The proof is based on the relation between the motivic Chern classes and stable envelopes \cite{feher2021motivic,AMSS:motivic}.
\begin{thm}\label{thm:dual2} The following holds in $K_T(G/B)[y,y^{-1}]$: \[ \langle MC_y(X_w^\circ), SMC_y(X^{u, \circ}) \rangle = \delta_{u,w} \/. \]
\end{thm}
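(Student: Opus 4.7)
The plan is to establish the duality by a localization argument at $T$-fixed points, combined with a Bruhat-order triangularity argument, with the off-diagonal vanishing reduced to an already-proven diagonal computation by an induction using the recursion of right Demazure--Lusztig operators.

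First I would apply the K-theoretic localization formula from Theorem~\ref{thm:loc} to rewrite
\[
\langle MC_y(X_w^\circ),\, SMC_y(X^{u,\circ}) \rangle \;=\; \sum_{v \in W} \frac{MC_y(X_w^\circ)|_v \cdot SMC_y(X^{u,\circ})|_v}{\lambda_{-1}(T^*_v(G/B))}.
\]
Since $MC_y(X_w^\circ)$ is defined as a pushforward under $X_w^\circ \hookrightarrow G/B$ factoring through the closed subvariety $X_w$, the localization $MC_y(X_w^\circ)|_v$ vanishes unless $e_v\in X_w$, i.e.~unless $v\le w$. Unraveling Definition~\ref{defn:smc}, one has $SMC_y(X^{u,\circ})|_v = (-y)^{\ell(w_0)-\ell(u)}\caD(MC_y(X^{u,\circ}))|_v/\lambda_y(T^*_v(G/B))$, and since $MC_y(X^{u,\circ})$ is supported on $X^u$ this vanishes unless $v\ge u$. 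Consequently only indices $v$ with $u\leq v\leq w$ contribute, so the pairing is automatically zero when $u \not\leq w$, and when $u = w$ only the single term $v = w$ survives. For this diagonal term one can use the known leading-term formulas for $MC_y(X_w^\circ)|_w$ (essentially the $\lambda_y$ class of the cotangent of $X_w$ at $e_w$, times an $\lambda_{-1}$ of the normal weights) and the corresponding Serre-dual expression for $SMC_y(X^{w,\circ})|_w$; the $\lambda_y$ and $\lambda_{-1}$ factors along the tangent and normal directions cancel to give exactly $1$.

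For the remaining case $u < w$, I would induct on $\ell(w) - \ell(u)$. Pick a simple reflection $s_i$ with $ws_i < w$; the recursion $\mathcal{T}_i^R MC_y(X_{ws_i}^\circ) = MC_y(X_w^\circ)$ from \cite{AMSS:motivic}, combined with the adjointness $\langle \mathcal{T}_i^R a,b\rangle=\langle a,\mathcal{T}_i^{R,\vee}b\rangle$ from Lemma~\ref{lemma:Kcomm}(c), gives
\[
\langle MC_y(X_w^\circ), SMC_y(X^{u,\circ})\rangle \;=\; \langle MC_y(X_{ws_i}^\circ),\, \mathcal{T}_i^{R,\vee} SMC_y(X^{u,\circ})\rangle.
\]
The key intermediate lemma is the dual recursion
\[
\mathcal{T}_i^{R,\vee} SMC_y(X^{u,\circ}) \;=\; \begin{cases} SMC_y(X^{us_i,\circ}) & \textrm{if } us_i > u,\\ -(1+y)SMC_y(X^{u,\circ}) - y\, SMC_y(X^{us_i,\circ}) & \textrm{otherwise}, \end{cases}
\]
which I would prove by pushing the Serre-dual operation $\caD$ through the smooth $\mathbb{P}^1$-fibration $\pi_i:G/B\to G/P_i$ via Grothendieck duality, translating the $MC_y$ recursion to a $SMC_y$ recursion, and matching signs using the shift $(-y)^{\ell(w_0)-\ell(u)}$ built into Definition~\ref{defn:smc}. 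Substituting this lemma into the identity above and using the induction hypothesis for the smaller pair $(ws_i, u)$ or $(ws_i, us_i)$ closes the induction.

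The main obstacle is the dual recursion lemma for $\mathcal{T}_i^{R,\vee}$ acting on $SMC_y$. Although it is the Serre-dual counterpart of the recursion for $MC_y$, the careful bookkeeping between the Serre-duality twist $\caD$, the substitution $y\leftrightarrow y^{-1}$ implicit in $\caD$ on $K_T(X)[y^{\pm 1}]$, and the dimension-dependent sign $(-y)^{\dim\Omega}$ in the definition of the Segre motivic class is exactly where the subtlety lies. Once this is in place the rest of the proof is a clean inductive reduction to the diagonal case already handled by localization.
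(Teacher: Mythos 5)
Your overall strategy (localization triangularity $+$ Demazure--Lusztig recursion) is a genuinely different route from the paper's: the paper does not prove Theorem~\ref{thm:dual2} in-text but cites \cite[Thm.~8.11]{AMSS:motivic}, where it is obtained from the identification of motivic Chern classes with K-theoretic stable envelopes and the orthogonality of stable envelopes for opposite chambers. Your localization argument for the vanishing when $u\not\le w$ and the diagonal computation at $v=w$ are both sound and are an attractive alternative to the stable-envelope input. However, the inductive step has two problems you should not gloss over.

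First, your ``key intermediate lemma'' is stated with the wrong coefficients. The correct statement is Theorem~\ref{thm:SMCR} of the paper:
\[
\mathcal{T}^{R,\vee}_i SMC_y(X^{u,\circ}) = \begin{cases} -y\, SMC_y(X^{us_i,\circ}) &\text{if } us_i>u,\\
-(1+y)\,SMC_y(X^{u,\circ})+SMC_y(X^{us_i,\circ}) &\text{if } us_i<u, \end{cases}
\]
whereas you wrote down the $MC_y$-type recursion with $SMC_y$ substituted in. The mismatch is not cosmetic: the $-y$ factor and the sign flip on the length-decreasing branch come exactly from the Serre twist and the $(-y)^{\dim\Omega}$ normalization, and carrying your incorrect version through the induction would produce the wrong answer (or fail to close).

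Second, and more seriously, there is a circularity you do not fully escape. In the paper, Theorem~\ref{thm:SMCR} is \emph{derived from} Theorem~\ref{thm:dual2} by a Gram--Schmidt-type argument through the pairing; it is not an independent fact. You recognize this and propose proving the $SMC_y$-recursion directly via Grothendieck duality along $\pi_i:G/B\to G/P_i$, but that is precisely the technical heart of the whole proof, and you only flag it as ``where the subtlety lies'' rather than carry it out. Note that $\caD\,\partial_i\,\caD$ does not simply return $\partial_i$: one picks up a sign and a relative dualizing twist $\omega_{\pi_i}=\calL_{\alpha_i}$ (which does not commute with $\partial_i$), and $\caD$ also exchanges $y\leftrightarrow y^{-1}$. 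Getting all of this to conspire with the $(-y)^{\dim\Omega}$ factor from Definition~\ref{defn:smc} to yield Theorem~\ref{thm:SMCR} is a nontrivial calculation. Finally, even granting the correct recursion, your induction ``on $\ell(w)-\ell(u)$'' does not descend cleanly in the branch $us_i<u$, where $(w,u)\mapsto(ws_i,us_i)$ preserves the length difference; you would need a lexicographic or double induction (say, on $\ell(w)$ first) to make the argument terminate.
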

It turns out that this result extends without change to $G/P$.
\begin{thm}\label{thm:dualP}
Let $u,w \in W^P$. Then the motivic Chern classes are dual to the Segre motivic Chern, i.e. \[ \langle MC_y(X_{wW_P}^\circ), SMC_y(X^{uW_P,\circ}) \rangle_{G/P} = \delta_{u,w} \/. \]
\end{thm}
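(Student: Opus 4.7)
The plan is to reduce the parabolic statement to the full flag case $G/B$ via the projection $\pi : G/B \to G/P$, so that Theorem~\ref{thm:dual2} can be applied.

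\textbf{Step 1: push/pull compatibility for motivic Chern classes.} For $w \in W^P$ the restriction $\pi|_{X_w^\circ}: X_w^\circ \to X_{wW_P}^\circ$ is an isomorphism: a standard computation with the root system shows that the stabilizers $B \cap wBw^{-1}$ and $B \cap wPw^{-1}$ share the same root set $R^+ \cap wR^+$, since $w \in W^P$ implies $wR_P^+ \subset R^+$. Therefore, by functoriality of $MC_y$ for proper morphisms (Theorem~\ref{thm:existence}(1)), one has $\pi_* MC_y(X_w^\circ) = MC_y(X_{wW_P}^\circ)$. Combined with the adjunction $\langle \pi_* a, b \rangle_{G/P} = \langle a, \pi^* b \rangle_{G/B}$ (which follows from the K-theoretic projection formula and the fact that $G/P \to \mathrm{pt}$ factors $G/B \to \mathrm{pt}$ through $\pi$), this gives
\[
\langle MC_y(X_{wW_P}^\circ), SMC_y(X^{uW_P,\circ}) \rangle_{G/P}
= \langle MC_y(X_w^\circ), \pi^* SMC_y(X^{uW_P,\circ}) \rangle_{G/B}.
\]

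\textbf{Step 2: pull back the Segre motivic class.} Since $\pi$ is a smooth equivariant morphism, Proposition~\ref{prop:gensegrepb} yields $\pi^* SMC_y(X^{uW_P,\circ}) = SMC_y(\pi^{-1}(X^{uW_P,\circ}))$, where $\pi^{-1}(X^{uW_P,\circ}) = B^- uP/B$ is irreducible of dimension $\dim G/B - \ell(u)$. Its stratification into $B^-$-orbits gives $B^- u P/B = \bigsqcup_{w' \in W_P} X^{uw',\circ}$, hence in the motivic Grothendieck group $[B^- u P/B] = \sum_{w' \in W_P} [X^{uw',\circ}]$, and therefore $MC_y(B^- u P/B) = \sum_{w' \in W_P} MC_y(X^{uw',\circ})$.

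\textbf{Step 3: rewrite the pulled-back Segre class.} Using $\ell(uw') = \ell(u) + \ell(w')$ for $u \in W^P$, $w' \in W_P$, one has $\dim(B^- u P/B) = \dim X^{uw',\circ} + \ell(w')$; plugging this into the definition of $SMC_y$ and applying $\mathcal{D}$ term-by-term gives
\[
SMC_y(B^- u P/B) = \sum_{w' \in W_P} (-y)^{\ell(w')} \, SMC_y(X^{uw',\circ}).
\]

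\textbf{Step 4: apply the $G/B$ duality.} Substituting back and invoking Theorem~\ref{thm:dual2}, which provides duality on $G/B$,
\[
\langle MC_y(X_{wW_P}^\circ), SMC_y(X^{uW_P,\circ}) \rangle_{G/P}
= \sum_{w' \in W_P} (-y)^{\ell(w')} \, \delta_{w, uw'}.
\]
For $u,w \in W^P$ and $w' \in W_P$, the equality $w = uw'$ can hold only when $w=u$ and $w'=e$, so the sum collapses to $\delta_{u,w}$, as claimed.

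The main conceptual point to get right is Step 3: tracking the dimension exponents in the definition of $SMC_y$ when decomposing an irreducible total space into strata of varying codimensions. The length-additivity $\ell(uw') = \ell(u)+\ell(w')$ for $u \in W^P$ is exactly what makes the sign/shift factors match, so that every stratum $X^{uw',\circ}$ contributes a clean $(-y)^{\ell(w')}$ and the pairing over $G/B$ reduces cleanly to Theorem~\ref{thm:dual2}.
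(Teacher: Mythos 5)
Your proof is correct and takes essentially the same route as the paper: push forward $MC_y$ along $\pi$ using that $\pi|_{X_w^\circ}$ is an isomorphism onto $X_{wW_P}^\circ$, apply the projection formula, pull back the Segre motivic class via Proposition~\ref{prop:gensegrepb}, and reduce to the $G/B$ duality of Theorem~\ref{thm:dual2}. One small remark: in Step 3 you correctly record the dimension-dependent normalization factors $(-y)^{\ell(w')}$ that appear when the irreducible preimage $B^-uP/B$ is decomposed into cells $X^{uw',\circ}$ of varying codimension; the paper's displayed chain writes $\pi^*SMC_y(X^{uW_P,\circ}) = \sum_{v\in uW_P} SMC_y(X^{v,\circ})$ without these factors, which is formally imprecise, although this has no effect on the final answer since only the $w'=e$ term survives and $(-y)^0 = 1$. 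Your version is the more careful bookkeeping of the two.
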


\begin{proof} Let $\pi: G/B \rightarrow G/P$ be the natural projection. This is a smooth morphism, to which we apply the Proposition \ref{prop:gensegrepb}. Observe that since $w\in W^P$, the restriction $\pi: X_w \to X_{wW_P}$ is birational over the Schubert cell $X_{w W_P}^\circ$. Thus by functoriality $\pi_* (MC_y(X_w^\circ))=MC_y(X_{wW_P}^\circ)$. By projection formula we have \[ \begin{split} &\langle MC_y(X_{wW_P}^\circ), SMC_y(X^{uW_P,\circ}) \rangle_{G/P}\\
 = & \int_{G/P} \pi_* (MC_y(X_{w}^\circ))\cdot SMC_y(X^{uW_P,\circ}) \\  =& 
 \int_{G/B} MC_y(X_w^\circ) \cdot  \pi^*(SMC_y(X^{uW_P,\circ})) \\  = 
 &\sum_{v\in uW_P} \int_{G/B} MC_y(X_w^\circ) \cdot  SMC_y(X^{v,\circ}) \\  = 
 & \delta_{w,u} \/.\end{split}\]  
Here the third equality follows from Proposition \ref{prop:gensegrepb} using that $\pi^{-1}( X^{u W_P} )= \coprod_{v \ge u} X^{v, \circ}$, and the fourth is a consequence of Poincar{\'e} duality for $G/B$, from Theorem \ref{thm:dual2}. 
\end{proof}

\subsection{Motivic Chern classes of Schubert cells via left DL operators} In this section we prove formulae for the action of the DL operators on motivic Chern (MC) classes and Segre motivic classes (SMC). We will start by recalling the action of the right DL operators.
\begin{thm}\cite{AMSS:motivic}\label{thm:MCR}
For $w \in W$ and simple root $\alpha_i$, we have  
\[ \mathcal{T}^R_i MC_y(X_w^\circ) = \begin{cases} MC_y(X_{ws_i}^\circ) &\textit{ if } ws_i>w,\\
-(1+y)MC_y(X_w^\circ)-yMC_y(X_{ws_i}^\circ) &\textit{ if } ws_i<w, \end{cases}\]
and
\[ \mathcal{T}^R_i MC_y(X^{w,\circ}) = \begin{cases} MC_y(X^{ws_i,\circ}) &\textit{ if } ws_i<w,\\
-(1+y)MC_y(X^{w,\circ})-yMC_y(X^{ws_i,\circ}) &\textit{ if } ws_i>w. \end{cases}\]
In particular, $MC_y(X_w^\circ)=\mathcal{T}^R_{w^{-1}}(\calO_{id})$, and $MC_y(X^{w,\circ})=\mathcal{T}^R_{w^{-1}w_0}(\calO^{w_0})$.
\end{thm}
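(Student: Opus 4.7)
The plan is to establish the ascending case $ws_i > w$ by a direct geometric calculation, and then derive everything else formally. The ascending case uses the definition $\mathcal{T}_i^R = (1+y\calL_{\alpha_i})\partial_i - \mathrm{id}$, the push-pull description $\partial_i = \pi_i^*(\pi_i)_*$ along the smooth $\mathbb{P}^1$-fibration $\pi_i: G/B \to G/P_i$, functoriality of $MC_y$ for proper morphisms, and the Verdier-Riemann-Roch formula from Theorem \ref{thm:existence}. The descending case then follows formally from the quadratic relation $(\mathcal{T}_i^R + 1)(\mathcal{T}_i^R + y) = 0$. The formulas for opposite Schubert cells will be deduced from the $B$-case by conjugating with the left action $w_0^L$, using equation \eqref{E:w0B} together with the commutativity $\mathcal{T}_i^R w_0^L = w_0^L \mathcal{T}_i^R$ from Lemma \ref{lemma:Kcomm}(a). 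The ``in particular'' statements then follow by iterating the recursion along a reduced word, starting from the base cases $MC_y(X_{id}^\circ) = \calO_{id}$ and $MC_y(X^{w_0,\circ}) = \calO^{w_0}$ (both reduced points, whose motivic Chern classes coincide with their structure sheaves).

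For the main geometric step, assume $ws_i > w$. Then $\pi_i|_{X_w^\circ} : X_w^\circ \to X_{wW_{P_i}}^\circ$ is an isomorphism of varieties, so functoriality of $MC_y$ yields $(\pi_i)_* MC_y(X_w^\circ) = MC_y(X_{wW_{P_i}}^\circ)$ in $K_T(G/P_i)[y]$. The preimage decomposes as $\pi_i^{-1}(X_{wW_{P_i}}^\circ) = X_w^\circ \sqcup X_{ws_i}^\circ$, with $X_w^\circ$ a section of the local $\mathbb{P}^1$-bundle and $X_{ws_i}^\circ$ its $\mathbb{A}^1$-fibered open complement. Additivity of the motivic transformation in the relative Grothendieck group of varieties then gives $MC_y(\pi_i^{-1}X_{wW_{P_i}}^\circ) = MC_y(X_w^\circ) + MC_y(X_{ws_i}^\circ)$. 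Applying Verdier-Riemann-Roch to the smooth morphism $\pi_i$, whose relative cotangent bundle is $\calL_{\alpha_i}$, yields
\[(1+y\calL_{\alpha_i})\,\pi_i^* MC_y(X_{wW_{P_i}}^\circ) = MC_y(X_w^\circ) + MC_y(X_{ws_i}^\circ).\]
Substituting back into $\mathcal{T}_i^R MC_y(X_w^\circ) = (1+y\calL_{\alpha_i})\partial_i MC_y(X_w^\circ) - MC_y(X_w^\circ)$ gives $\mathcal{T}_i^R MC_y(X_w^\circ) = MC_y(X_{ws_i}^\circ)$.

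For $ws_i < w$, set $v := ws_i$ so that $vs_i > v$; the ascending case gives $\mathcal{T}_i^R MC_y(X_v^\circ) = MC_y(X_w^\circ)$, and applying $\mathcal{T}_i^R$ once more together with $(\mathcal{T}_i^R)^2 = -(1+y)\mathcal{T}_i^R - y\cdot\mathrm{id}$ produces exactly the claimed expression. For the opposite cells, functoriality of $MC_y$ under the automorphism $\Phi_{w_0}$ combined with equation \eqref{E:w0B} yields $MC_y(X^{u,\circ}) = w_0^L MC_y(X_{w_0 u}^\circ)$; combining with $\mathcal{T}_i^R w_0^L = w_0^L \mathcal{T}_i^R$ translates the two $B$-cell identities into the opposite ones, noting that the ascent/descent condition flips under $w \mapsto w_0 w$.

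The main obstacle I anticipate is justifying the additivity step $MC_y(\pi_i^{-1}X_{wW_{P_i}}^\circ) = MC_y(X_w^\circ) + MC_y(X_{ws_i}^\circ)$ cleanly; this rests on the fact that $MC_y$ is defined via the relative Grothendieck group of varieties $G_0^T(\mathrm{var}/G/B)$, so the disjoint union decomposition of the preimage into the section and its complement descends to an equality of motivic classes. A secondary (essentially bookkeeping) input is the precise identification $T_{\pi_i}^* \cong \calL_{\alpha_i}$, which is standard but requires matching sign conventions so that the prefactor $\lambda_y(T_{\pi_i}^*) = 1 + y\calL_{\alpha_i}$ appearing in VRR matches the prefactor $(1+y\calL_{\alpha_i})$ in the definition of $\mathcal{T}_i^R$.
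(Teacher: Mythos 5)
Your proposal is correct and follows essentially the same route as the paper. The paper cites \cite{AMSS:motivic} for the first displayed formula and only records the $w_0^L$-conjugation via Lemma \ref{lemma:Kcomm}(a) and $w_0^LMC_y(X_w^\circ)=MC_y(X^{w_0w,\circ})$; your reconstruction of the cited part (push-pull along the $\mathbb{P}^1$-fibration $\pi_i$, additivity of $MC_y$ over $\pi_i^{-1}(X_{wW_{P_i}}^\circ)=X_w^\circ\sqcup X_{ws_i}^\circ$, Verdier--Riemann--Roch with $T^*_{\pi_i}\cong\calL_{\alpha_i}$, and the quadratic relation for the descent case) is exactly the argument used in that reference, and your treatment of the opposite cells and the ``in particular'' statements matches the paper's.
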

\begin{proof}
The first equality follows from \cite{AMSS:motivic}, while the second one follows from Lemma \ref{lemma:Kcomm}(a) and $w_0^LMC_y(X_w^\circ)=MC_y(X^{w_0w,\circ})$.
\end{proof}

For the Segre motivic Chern classes, we have the following theorem.
\begin{thm}\label{thm:SMCR}
For $w \in W$ and simple root $\alpha_i$, we have  
{\[ \mathcal{T}^{R,\vee}_i SMC_y(X_w^\circ) = \begin{cases} -ySMC_y(X_{ws_i}^\circ) &\textit{ if } ws_i<w,\\
-(1+y) SMC_y(X_w^\circ) +SMC_y(X_{ws_i}^\circ) &\textit{ if } ws_i>w \/; \end{cases}\]}
and
{\[ \mathcal{T}^{R,\vee}_i SMC_y(X^{w,\circ}) = \begin{cases} -ySMC_y(X^{ws_i,\circ}) &\textit{ if } ws_i>w,\\
-(1+y)SMC_y(X^{w,\circ})+SMC_y(X^{ws_i,\circ}) &\textit{ if } ws_i<w. \end{cases}\]}
In particular, 
 \[SMC_y(X_w^\circ)= \frac{(-y)^{\ell(w)}}{\prod_{\alpha>0}(1+ye^\alpha)}(\calT_w^{R,\vee})^{-1}(\calO_{id}) \/; \] \[ SMC_y(X^{w,\circ})=\frac{(-y)^{\dim G/B-\ell(w)}}{\prod_{\alpha>0}(1+ye^{-\alpha})}(\calT_{w_0w}^{R,\vee})^{-1}(\calO^{w_0})\/.\]
\end{thm}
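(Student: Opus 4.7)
The plan is to establish the two recursion formulas by a Poincar\'e duality argument modelled on Corollary~\ref{cor:Tissm}, and then bootstrap the closed-form expressions by induction on $\ell(w)$.

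For the recursion on $SMC_y(X^{w,\circ})$, I would expand $\calT_i^{R,\vee}SMC_y(X^{w,\circ})$ in the basis $\{SMC_y(X^{u,\circ})\}_{u\in W}$ using Theorem~\ref{thm:dualP} (with $P=B$): the coefficient of $SMC_y(X^{u,\circ})$ is $\langle MC_y(X_u^\circ),\calT_i^{R,\vee}SMC_y(X^{w,\circ})\rangle$, which by Lemma~\ref{lemma:Kcomm}(c) equals $\langle \calT_i^R MC_y(X_u^\circ),SMC_y(X^{w,\circ})\rangle$. Plugging in the two-case formula from Theorem~\ref{thm:MCR} and observing that only $u\in\{w,ws_i\}$ give nonzero Kronecker deltas, a short case split on $us_i\gtrless u$ isolates the two coefficients appearing in the statement. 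The recursion on $SMC_y(X_w^\circ)$ is entirely analogous, provided one first establishes the ``flipped'' pairing $\langle MC_y(X^{u,\circ}),SMC_y(X_w^\circ)\rangle=\delta_{u,w}$. I would derive this by applying $w_0^L$ to Theorem~\ref{thm:dualP}: functoriality of $MC_y$ and $SMC_y$ under the isomorphism $\Phi_{w_0}$ gives $w_0^L MC_y(X_w^\circ)=MC_y(X^{w_0w,\circ})$ and $w_0^L SMC_y(X^{w,\circ})=SMC_y(X_{w_0w}^\circ)$ (the latter uses that $T_{G/B}$ is $G$-equivariant), while Proposition~\ref{prop:Kprop}(e) ensures the pairing transforms via $w_0$ so the Kronecker delta is preserved. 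The case analysis then proceeds using the second part of Theorem~\ref{thm:MCR}.

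For the closed-form expressions I would induct on $\ell(w)$. The base case for $SMC_y(X_w^\circ)$ is $SMC_y(X_{id}^\circ)=\frac{\calO_{id}}{\prod_{\alpha>0}(1+ye^\alpha)}$: since $X_{id}^\circ=\{e_{id}\}$ is a point, $MC_y(X_{id}^\circ)=\iota_{id}=\calO_{id}$ by the normalization axiom; the equivariant Grothendieck--Serre duality identity $\caD(\iota_{id})=\iota_{id}$ combined with $\lambda_y(T^*_{G/B})|_{e_{id}}=\prod_{\alpha>0}(1+ye^\alpha)$ (the cotangent weights at $e_{id}$ are the positive roots) then gives the formula by localization at $e_{id}$. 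For the inductive step, whenever $ws_i<w$ the first recursion rewrites as $SMC_y(X_w^\circ)=-y(\calT_i^{R,\vee})^{-1}SMC_y(X_{ws_i}^\circ)$; inserting the inductive hypothesis, pulling the scalar $\frac{(-y)^{\ell(ws_i)}}{\prod(1+ye^\alpha)}$ through the $K_T(pt)$-linear operator $(\calT_i^{R,\vee})^{-1}$, and using $(\calT_i^{R,\vee})^{-1}(\calT_{ws_i}^{R,\vee})^{-1}=(\calT_w^{R,\vee})^{-1}$ (valid since $\ell(w)=\ell(ws_i)+1$) closes the induction. The opposite-cell formula is proved symmetrically, with base case at $w=w_0$ (where $X^{w_0,\circ}=\{e_{w_0}\}$) and the recursion applied at ascents $ws_i>w$.

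The main obstacle is the base-case identity $\caD(\iota_v)=\iota_v$ for a $T$-fixed point $v$. This rests on the local Koszul resolution $\RHom(\cO_{e_v},\cO_{G/B})\simeq\det T_{e_v}G/B[-\dim G/B]$ combined with the adjunction $\omega_{G/B}|_{e_v}=(\det T_{e_v}G/B)^{-1}$, and must be checked with attention to the shift conventions used in Definition~\ref{defn:smc}; once this is in hand, the remainder of the argument is a clean double induction on length that only tracks the scalar normalization factor.
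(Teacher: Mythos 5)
Your proof is correct and takes essentially the same route as the paper: both establish the recursion for $SMC_y(X^{w,\circ})$ by expanding in the dual basis via Theorem \ref{thm:dual2} and the adjunction of Lemma \ref{lemma:Kcomm}(c), both obtain the $B$-cell recursion via $w_0^L$ (the paper conjugates the whole identity, you transport the duality pairing --- a minor rephrasing), and both deduce the closed forms from the base cases $SMC_y(X_{id}^\circ)$ and $SMC_y(X^{w_0,\circ})$ by iterating. Your explicit induction on $\ell(w)$, including the Koszul computation showing $\caD(\iota_v)=\iota_v$ and the verification that $\calT_{ws_i}^{R,\vee}\calT_i^{R,\vee}=\calT_w^{R,\vee}$ when $\ell(w)=\ell(ws_i)+1$, usefully fills in details the paper leaves implicit.
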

\begin{proof}
It suffices to prove the second equality, as the first one can be deduced by applying $w_0^L$. To prove the second equality, 
we utilize Theorem \ref{thm:dual2} and Lemma \ref{lemma:Kcomm}(c) to obtain:
\[ \begin{split}
\mathcal{T}^{R,\vee}_i SMC_y(X^{w,\circ}) =& \sum_{u\in W}\langle \mathcal{T}^{R,\vee}_i SMC_y(X^{w,\circ}), MC_y(X_u^\circ)\rangle SMC_y(X^{u,\circ})\\
=& \sum_{u\in W}\langle  SMC_y(X^{w,\circ}), \mathcal{T}^R_i MC_y(X_u^\circ)\rangle SMC_y(X^{u,\circ}) \/. \end{split}\] 
Then from Theorem \ref{thm:MCR}, the last expression equals
\[ \begin{split} & \sum_{u\in W, us_i>u}\langle  SMC_y(X^{w,\circ}),  MC_y(X_{us_i}^\circ)\rangle SMC_y(X^{u,\circ})
\\ & +\sum_{u\in W, us_i<u}\langle  SMC_y(X^{w,\circ}),  -(1+y)MC_y(X_u^\circ)-yMC_y(X_{us_i}^\circ)\rangle SMC_y(X^{u,\circ})\\
& =\begin{cases} -ySMC_y(X^{ws_i,\circ}) &\textit{ if } ws_i>w,\\
-(1+y)SMC_y(X^{w,\circ})+SMC_y(X^{ws_i,\circ}) &\textit{ if } ws_i<w \/.\end{cases}
\end{split}\]
Finally, the last two identities follow from \[ SMC(X_{id}^\circ)=\frac{\calO_{id}}{\prod_{\alpha>0}(1+ye^\alpha)} \textrm{ and } SMC(X^{w_0,\circ})=\frac{\calO^{w_0}}{\prod_{\alpha>0}(1+ye^{-\alpha})} \/. \]
\end{proof}

We continue towards investigating the actions of the left DL operators on motivic Chern and Segre classes for any $G/P$. 
The ideas of proofs are the same as in cohomology, but the formulae get more involved. We need the following lemma.
\begin{lem}\cite[Lemma 2.1]{deodhar:bruhatII}\label{lem:minimal}
For any simple root $\alpha_i$ and $w\in W^P$, exactly one of the following occurs:
\begin{enumerate}
\item $s_iw<w$, in which case, $s_iw\in W^P$ as well;
\item $s_iw>w$ and $s_iw\in W^P$;
\item $s_iw>w$ and $s_iw\notin W^P$. In this case, $s_iw=ws_j$ for some simple reflection  $s_j$ in $W_P$.
\end{enumerate}
\end{lem}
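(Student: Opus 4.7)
The plan is to reduce everything to the root-theoretic characterization of the minimal coset representatives: namely, $w\in W^P$ if and only if $w(\alpha_j)$ is a positive root for every simple root $\alpha_j\in S$. This is equivalent to saying $\ell(ws_j)=\ell(w)+1$ for all $s_j\in W_P$ associated with simple roots. I would first record this fact (it is standard), and then split cases according to the sign of $\ell(s_iw)-\ell(w)$, which, by the exchange condition, is governed by the sign of $w^{-1}(\alpha_i)$.

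In the first case, $s_iw<w$, i.e.\ $w^{-1}(\alpha_i)<0$. To prove $s_iw\in W^P$, I would argue by contradiction: if $s_iw\notin W^P$, then $s_iw(\alpha_j)<0$ for some simple $\alpha_j\in S$. Writing $s_iw(\alpha_j)=s_i(w(\alpha_j))$ and using that $w(\alpha_j)>0$ (since $w\in W^P$), the only way for $s_i$ to flip its sign is $w(\alpha_j)=\alpha_i$. But then $w^{-1}(\alpha_i)=\alpha_j>0$, contradicting $s_iw<w$. This gives case (1).

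In the remaining case $s_iw>w$, there is nothing to prove if $s_iw\in W^P$ (case 2). Otherwise, repeating the sign analysis above, failure of $s_iw\in W^P$ forces the existence of a simple root $\alpha_j\in S$ with $w(\alpha_j)=\alpha_i$. Conjugating the simple reflection then yields
\[
s_iw\,w^{-1}=s_i=s_{w(\alpha_j)}=w\,s_j\,w^{-1},
\]
so $s_iw=ws_j$ with $s_j\in W_P$, which is case (3). Mutual exclusivity of (1), (2), (3) is clear from how they were partitioned.

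I do not expect a real obstacle here; the argument is a clean application of the sign rules $s_i(\beta)>0$ for $\beta\in R^+\setminus\{\alpha_i\}$ combined with the root characterization of $W^P$. The only subtle point worth emphasizing in the write-up is the identity $s_{w(\alpha_j)}=ws_jw^{-1}$, which is what produces the simple reflection $s_j\in W_P$ in case (3); everything else is bookkeeping.
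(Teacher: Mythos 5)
The paper does not prove this lemma itself; it simply cites \cite[Lemma 2.1]{deodhar:bruhatII}, so there is no internal proof to compare against. Your argument is a correct, self-contained verification: it rests on the standard characterization $w\in W^P \iff w(\alpha_j)>0$ for all simple $\alpha_j\in S$, the fact that $s_i$ permutes $R^+\setminus\{\alpha_i\}$, and the conjugation identity $s_{w(\alpha_j)}=ws_jw^{-1}$, and all three of your case analyses check out (in particular, in case (1) the key step $w(\alpha_j)=\alpha_i \Rightarrow w^{-1}(\alpha_i)=\alpha_j>0$ correctly yields the contradiction). This is in fact the usual proof one finds in Deodhar and standard references, so there is nothing substantive to flag.
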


First of all, for the motivic Chern classes of the Schubert cells, we have the following theorem.
\begin{thm}\label{thm:LDLMCP}
For any $w\in W^P$ and any simple reflection $s_i$, we have
\[\mathcal{T}_i^L(MC_y(X_{wW_P}^\circ))=\begin{cases} (-y)^{\ell(s_iw)-\ell(s_iwW_P)}MC_y(X_{s_i wW_P}^\circ) &\textit{ if } s_iw>w;\\
-(1+y)MC_y(X_{wW_P}^\circ)-y MC_y(X_{s_iwW_P}^\circ) &\textit{ if } s_iw<w. \end{cases}\]
\end{thm}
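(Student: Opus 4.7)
The plan is to reduce to the case $P=B$ via the projection $\pi: G/B \to G/P$, then prove the $G/B$ statement by combining the recursion $MC_y(X_w^\circ) = \mathcal{T}_{w^{-1}}^R(\mathcal{O}_{id})$ from Theorem \ref{thm:MCR} with the commutation of left and right Demazure-Lusztig operators (Lemma \ref{lemma:Kcomm}(a)). Since $\pi$ is $G$-equivariant, $\pi_*$ commutes with $\mathcal{T}_i^L$; and for $w \in W^P$ the restriction $\pi|_{X_w^\circ}$ is an isomorphism onto $X_{wW_P}^\circ$, hence by functoriality $\pi_* MC_y(X_w^\circ) = MC_y(X_{wW_P}^\circ)$. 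Thus
\[ \mathcal{T}_i^L MC_y(X_{wW_P}^\circ) = \pi_* \mathcal{T}_i^L MC_y(X_w^\circ) \/, \]
and the parabolic statement reduces to the $G/B$ case together with an analysis of how $\pi_*$ acts on motivic classes indexed by non-minimal coset representatives.

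For the $G/B$ case, I would first verify the base identity $\mathcal{T}_i^L(\mathcal{O}_{id}) = MC_y(X_{s_i}^\circ)$. Since $MC_y(X_{id}^\circ) = \mathcal{O}_{id}$ and $MC_y(X_{s_i}^\circ) = \mathcal{T}_i^R(\mathcal{O}_{id})$ by Theorem \ref{thm:MCR}, it suffices to check $\mathcal{T}_i^L(\mathcal{O}_{id}) = \mathcal{T}_i^R(\mathcal{O}_{id})$, a direct calculation from the definition of $\mathcal{T}_i^L$, the Leibniz rule of Lemma \ref{lemma:Kalg}(d), and Proposition \ref{prop:Kaction}; alternatively one checks equality by equivariant localization at the fixed points $e_{id}$ and $e_{s_i}$. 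Granting this, Lemma \ref{lemma:Kcomm}(a) gives
\[ \mathcal{T}_i^L MC_y(X_w^\circ) = \mathcal{T}_{w^{-1}}^R \mathcal{T}_i^L(\mathcal{O}_{id}) = \mathcal{T}_{w^{-1}}^R \mathcal{T}_i^R(\mathcal{O}_{id}) \/. \]
If $s_iw > w$, then $\ell(w^{-1}s_i) = \ell(w^{-1})+1$ so $\mathcal{T}_{w^{-1}}^R \mathcal{T}_i^R = \mathcal{T}_{(s_iw)^{-1}}^R$, yielding $MC_y(X_{s_iw}^\circ)$. If $s_iw < w$, factor $\mathcal{T}_{w^{-1}}^R = \mathcal{T}_{(s_iw)^{-1}}^R \mathcal{T}_i^R$ and apply the quadratic relation $(\mathcal{T}_i^R)^2 = -(1+y)\mathcal{T}_i^R - y$ to obtain $-(1+y)MC_y(X_w^\circ) - y\,MC_y(X_{s_iw}^\circ)$.

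For the parabolic case, apply $\pi_*$ to the $G/B$ formula and invoke Deodhar's Lemma \ref{lem:minimal}. When $s_iw < w$, or when $s_iw > w$ and $s_iw \in W^P$, the right-hand element is itself in $W^P$, so $\pi_*$ sends $MC_y(X_{s_iw}^\circ)$ directly to $MC_y(X_{s_iwW_P}^\circ)$ with trivial prefactor $(-y)^0 = 1$, reproducing the theorem in these two subcases. The only remaining possibility is $s_iw > w$ with $s_iw = ws_j$ for some simple reflection $s_j \in W_P$, in which case $s_iwW_P = wW_P$ and the exponent equals one; one must then establish
\[ \pi_* MC_y(X_{ws_j}^\circ) = -y\, MC_y(X_{wW_P}^\circ) \/. \]

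The main obstacle is this last identity. To prove it I would expand $\pi_* MC_y(X_{ws_j}^\circ)$ in the basis $\{MC_y(X_{uW_P}^\circ)\}_{u \in W^P}$ using the Poincar{\'e} duality of Theorem \ref{thm:dualP}: the coefficient of $MC_y(X_{uW_P}^\circ)$ equals $\langle MC_y(X_{ws_j}^\circ), \pi^* SMC_y(X^{uW_P,\circ})\rangle_{G/B}$ by adjunction. Proposition \ref{prop:gensegrepb} identifies $\pi^* SMC_y(X^{uW_P,\circ})$ with $SMC_y(\pi^{-1} X^{uW_P,\circ})$, and decomposing $\pi^{-1}(X^{uW_P,\circ}) = \bigsqcup_{v \in uW_P} X^{v,\circ}$ while carefully tracking the dimension normalization factor $(-y)^{\dim \Omega}$ in Definition \ref{defn:smc} expresses this as $\sum_{v \in uW_P}(-y)^{\ell(v)-\ell(u)} SMC_y(X^{v,\circ})$. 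Pairing with $MC_y(X_{ws_j}^\circ)$ and using the $G/B$ duality of Theorem \ref{thm:dual2} isolates the single term $v = ws_j$, giving the coefficient $(-y)^{\ell(ws_j)-\ell(u)}\delta_{wW_P,uW_P} = -y\,\delta_{w,u}$, exactly as required.
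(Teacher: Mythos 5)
Your proof is correct and follows the same overall outline as the paper's: reduce to $G/B$ via the $G$-equivariant projection $\pi$, verify the base identity $\mathcal{T}_i^L(\mathcal{O}_{id}) = MC_y(X_{s_i}^\circ)$, and then use commutation of left and right DL operators together with Theorem~\ref{thm:MCR} and the quadratic relation. (The paper applies the quadratic relation to $\mathcal{T}_i^L$ rather than to $\mathcal{T}_i^R$, but this is inessential.) The genuine difference is in the parabolic step. The paper cites the general push-forward formula $\pi_* MC_y(X_u^\circ) = (-y)^{\ell(u)-\ell(uW_P)} MC_y(X_{uW_P}^\circ)$ from \cite[Remark~5.5]{AMSS:motivic} as a black box; you instead derive the one special case actually needed, $\pi_* MC_y(X_{ws_j}^\circ) = -y\,MC_y(X_{wW_P}^\circ)$ for $s_j \in W_P$ simple and $w \in W^P$, entirely from tools already in the paper: Poincar{\'e} duality (Theorems~\ref{thm:dualP} and~\ref{thm:dual2}), the Verdier--Riemann--Roch pull-back (Proposition~\ref{prop:gensegrepb}), and additivity of $MC_y$ combined with careful tracking of the $(-y)^{\dim\Omega}$ normalization in Definition~\ref{defn:smc}. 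This is slightly longer but makes the argument more self-contained, and in fact recovers (for $u = ws_j$) the very formula the paper cites, so the two approaches are equivalent in substance. One small point worth pinning down if you write this up formally: the cell decomposition $\pi^{-1}(X^{uW_P,\circ}) = \bigsqcup_{v \in uW_P} X^{v,\circ}$ you invoke should be justified (e.g.\ by $B^-$-invariance and matching of $T$-fixed points), and one should note that $\pi^{-1}(X^{uW_P,\circ})$ is pure-dimensional (being a locally trivial $P/B$-bundle over a cell) so that Definition~\ref{defn:smc} and Proposition~\ref{prop:gensegrepb} apply to it.
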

\begin{remark}
Using $w_0^L\mathcal{T}_{\alpha_i}^Lw_0^L=\calT_{-w_0\alpha_i}^{L,\vee}$ and $w_0^LMC_y(X_{wW_P}^\circ)=MC_y(X^{w_0wW_P,\circ})$, we can get a similar formula for the action of $\mathcal{T}_{\alpha_i}^{L,\vee}$ on $MC_y(X^{wW_P,\circ})$. Same comment applies to the action of $\mathcal{T}_{i}^L$ on $SMC_y(X_{wW_P}^\circ)$ in Theorem \ref{thm:SMCTL} below.
\end{remark}
\begin{proof} We first consider the case when $P=B$. In this case, due to the quadratic relation $(\mathcal{T}_i^L+1)(\mathcal{T}_i^L+y)=0$, the statement for the case $s_iw<w$ can be deduced from the case $s_iw>w$. In the latter case,
by Theorem \ref{thm:MCR},
\[MC_y(X_{s_i}^\circ)=\mathcal{T}^R_{i}(\cO_{id})=(1+y\calL_{\alpha_i})\cO_{s_i}-\cO_{id}=(1+ye^{-\alpha_i})\cO_{s_i}-(1+y+ye^{-\alpha_i})\cO_{id},\]
where the last equality can be easily checked by restricting to the fixed points $e_{id}$ and $e_{s_i}$.
On the other hand, by definition of $\mathcal{T}_i^L$ and Proposition \ref{prop:Kaction},
\begin{align*}
\mathcal{T}_i^L(\calO_{id})&=(1+ye^{-\alpha_i})\calO_{s_i}-(1+y+ye^{-\alpha_i})\calO_{id}
=MC_y(X_{s_i}^\circ).
\end{align*}
Therefore,
\begin{align*}
T^L_i MC_y(X_w^\circ)& =T^L_i \mathcal{T}^R_{w^{-1}}(\cO_{id})= \mathcal{T}^R_{w^{-1}}T^L_i(\cO_{id})  \\ & = \mathcal{T}^R_{w^{-1}}\mathcal{T}^R_i(\cO_{id})=\mathcal{T}^R_{w^{-1}s_i}(\cO_{id})=MC_y(X_{s_i w}^\circ) \/, 
\end{align*}
where the second equality follows from Lemma \ref{lemma:Kcomm}(a). This finishes the case $P=B$.

Finally, we turn to the parabolic case. Recall that $\pi:G/B\rightarrow G/P$ is the natural projection, and that for any $u\in W$, we have (see \cite[Remark 5.5]{AMSS:motivic})
\[\pi_*MC_y(X_u^\circ)=(-y)^{\ell(u)-\ell(uW_P)}MC_y(X_{uW_P}^\circ).\]
Thus, for $w \in W^P$, and by Proposition \ref{prop:Kprop}(d) and Lemma \ref{lem:minimal}(1),
\begin{align*}
\mathcal{T}_i^L(MC_y(X_{wW_P}^\circ))=&\mathcal{T}_i^L\pi_*MC_y(X_{w}^\circ)=\pi_*\mathcal{T}_i^LMC_y(X_{w}^\circ)\\
=&\begin{cases} (-y)^{\ell(s_iw)-\ell(s_iwW_P)}MC_y(X_{s_i wW_P}^\circ) &\textit{ if } s_iw>w,\\
-(1+y)MC_y(X_{wW_P}^\circ)-y MC_y(X_{s_iwW_P}^\circ) &\textit{ if } s_iw<w. \end{cases}
\end{align*} This finishes the proof.\end{proof}

For the Segre motivic Chern classes, we have the following theorem.
\begin{thm}\label{thm:SMCTL}
For any $w\in W^P$ and a simple reflection $s_i$, the following holds
\begin{align*}
\mathcal{T}_i^{L,\vee}(SMC_y(X^{wW_P,\circ}))=\begin{cases} -ySMC_y(X^{s_iwW_P,\circ}) & \textit{ if } s_iw>w;\\
-(1+y)SMC_y(X^{wW_P,\circ})+SMC_y(X^{s_iwW_P,\circ}) & \textit{ if } s_iw<w.
\end{cases}
\end{align*}
\end{thm}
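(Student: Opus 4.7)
The strategy is a direct parabolic analogue of the proof of Theorem \ref{thm:SMCR}: expand the left-hand side in the basis of Segre motivic Chern classes of opposite Schubert cells via Poincar\'e duality (Theorem \ref{thm:dualP}), convert the resulting pairings via the adjoint formula in Lemma \ref{lemma:Kcomm}(c), and evaluate using the known action on motivic Chern classes given by Theorem \ref{thm:LDLMCP}. Concretely, for each $u\in W^P$ set
\[ a_u := \langle MC_y(X_{uW_P}^\circ), \mathcal{T}_i^{L,\vee}(SMC_y(X^{wW_P,\circ}))\rangle, \]
so that $\mathcal{T}_i^{L,\vee}(SMC_y(X^{wW_P,\circ})) = \sum_{u \in W^P} a_u\, SMC_y(X^{uW_P,\circ})$ by Theorem \ref{thm:dualP}. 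Lemma \ref{lemma:Kcomm}(c), together with $s_i^2=\mathrm{id}$, gives
\[ a_u \,=\, s_i \cdot \langle \mathcal{T}_i^L MC_y(X_{uW_P}^\circ),\, SMC_y(X^{wW_P,\circ})\rangle. \]

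The next step is to substitute the formula from Theorem \ref{thm:LDLMCP} and apply Theorem \ref{thm:dualP} once more. According to Lemma \ref{lem:minimal}, for each $u\in W^P$ exactly one of three things occurs: (1) $s_iu<u$, which contributes via the two-term expression $-(1{+}y)MC_y(X_{uW_P}^\circ)-y MC_y(X_{s_iuW_P}^\circ)$; (2) $s_iu>u$ with $s_iu\in W^P$, contributing $MC_y(X_{s_iuW_P}^\circ)$; (3) $s_iu>u$ with $s_iu=us_j\notin W^P$, in which case $s_iuW_P=uW_P$ and the contribution collapses to $-y\, MC_y(X_{uW_P}^\circ)$. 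In every instance the resulting pairing lies in $\bbZ[y]$, hence is $s_i$-invariant, and the outer $s_i$ factor disappears.

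Assembling the cases, only $u=w$ and $u=s_iw$ can produce nonzero $a_u$. When $s_iw<w$ (so automatically $s_iw\in W^P$), $u=w$ falls into case (1) giving $a_w=-(1{+}y)$ and $u=s_iw$ falls into case (2) giving $a_{s_iw}=1$; this yields the second line of the theorem. When $s_iw>w$ with $s_iw\in W^P$, only $u=s_iw$ contributes (now in case (1)) with $a_{s_iw}=-y$; when $s_iw>w$ with $s_iw\notin W^P$, the coset $s_iwW_P$ equals $wW_P$ and $u=w$ itself contributes $a_w=-y$ through case (3). Both subcases produce $-y\, SMC_y(X^{s_iwW_P,\circ})$, matching the first line.

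The main point to watch is the bookkeeping of case (3) of Lemma \ref{lem:minimal}, which is the only scenario where the ``shifted'' class $SMC_y(X^{s_iwW_P,\circ})$ coincides with the original class $SMC_y(X^{wW_P,\circ})$; verifying that the theorem's uniform statement absorbs this degeneracy correctly is the sole subtlety. Everything else is a routine pairing computation against the dual basis, on the same pattern as the $G/B$ case in Theorem \ref{thm:SMCR}.
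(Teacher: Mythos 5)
Your proposal is correct, and the route is essentially the same as the paper's: expand $\mathcal{T}_i^{L,\vee}(SMC_y(X^{wW_P,\circ}))$ in the Segre basis via the duality of Theorem~\ref{thm:dualP}, transfer $\mathcal{T}_i^{L,\vee}$ onto the $MC_y$ side using Lemma~\ref{lemma:Kcomm}(c), substitute Theorem~\ref{thm:LDLMCP}, and re-apply duality, with the case analysis driven by Lemma~\ref{lem:minimal}. The only difference is cosmetic: the paper first invokes the quadratic relation $(\mathcal{T}_i^{L,\vee}+1)(\mathcal{T}_i^{L,\vee}+y)=0$ to reduce the $s_iw<w$ case to the $s_iw>w$ case and then verifies only the latter, whereas you compute all three branches of Lemma~\ref{lem:minimal} directly; your bookkeeping of case (3), where $s_iwW_P = wW_P$, matches the paper's.
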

\begin{proof} Due to the quadratic relation $(\mathcal{T}_i^{L,\vee}+1)(\mathcal{T}_i^{L,\vee}+y)=0$ and since $s_i w <w$ implies that $s_iw \in W^P$ by Lemma \ref{lem:minimal}, the statement for $s_iw<w$ can be deduced from the case $s_iw>w$. We assume $s_iw>w$. For any $u\in W^P$, we have
\begin{align*}
&\langle\mathcal{T}_i^{L,\vee}(SMC_y(X^{wW_P,\circ})),MC_y(X_{uW_P}^\circ)\rangle\\
=&s_i\langle SMC_y(X^{wW_P,\circ}),\mathcal{T}_i^{L}MC_y(X_{uW_P}^\circ)\rangle\\
=&\begin{cases} s_i\langle SMC_y(X^{wW_P,\circ}),(-y)^{\ell(s_iu)-\ell(s_iuW_P)}MC_y(X_{s_i uW_P}^\circ)\rangle &\textit{ if } s_iu>u;\\
s_i\langle SMC_y(X^{wW_P,\circ}),-(1+y)MC_y(X_{uW_P}^\circ)-y MC_y(X_{s_iuW_P}^\circ)\rangle &\textit{ if } s_iu<u, \end{cases}\\
=&\begin{cases} 0 &\textit{ if } s_iu>u, \textit{ and } s_iu\in W^P;\\
-y\delta_{w,u} &\textit{ if } s_iu>u, \textit{ and } s_iu\notin W^P;\\
-y\delta_{w,s_iu} &\textit{ if } s_iu<u, \end{cases}\\
=&-y\delta_{s_iwW_P,uW_P}.
\end{align*}
where the first equality follows from Lemma \ref{lemma:Kcomm}(c), the second one follows from Theorem \ref{thm:LDLMCP}, 
and the third and the last one follow from Theorem \ref{thm:dualP} and Lemma \ref{lem:minimal}. The theorem follows 
from this and Theorem \ref{thm:dualP}.\end{proof}

\section{Left operators in quantum cohomology and quantum K-theory}\label{s:quantumleft}
In this section we extend the definition of left divided difference operators to equivariant quantum cohomology
and equivariant quantum K-theory. 

\subsection{Equivariant quantum cohomology} Our main reference for quantum cohomology is \cite{fulton.pandh:notes}.
~As above $X$ is a smooth, complex, projective variety with an action of a torus $T$. 
Assume also that $X$ is Fano and that there is a finite basis $\gamma_1, \ldots , \gamma_N \in H_2(X)$ 
for the effective cone of curves in $X$. A {\em degree} $d=(d_1, \ldots , d_N)$ is an element 
$d = d_1 \gamma_1 + \ldots d_N \gamma_N \in H_2(X)$. 
The $T$-equivariant quantum 
cohomology ring $\QH^*_T(X)$ is a graded algebra over $H^*_T(X)$. Additively, 
$\QH^*_T(X) = H^*_T(X) \otimes_{H^*_T(pt)} H^*_T(pt)[q_1, \ldots , q_N]$ 
where $q:=(q_1, \ldots, q_N)$ is the sequence of (quantum) parameters indexed by the 
basis of the effective cone of curves in $X$. The degree of $q_i$ is \[ \deg (q_i) := \int_X c_1(T_X) \cap \gamma_i \quad \/. \]

The quantum multiplication, denoted by $\star$, is determined by the {\em equivariant Gromov-Witten} (GW) 
invariants defined by Givental \cite{givental:egw}. More precisely, recall that $\langle \cdot , \cdot \rangle$ 
is the intersection pairing, which we now extend
by $\Q[q]$-linearity. Then for any $a, b \in H^*_T(X)$, the quantum multiplication is 
determined by the condition that for all $c \in H^*_T(X)$, \begin{equation}\label{E:qpairing} \langle a \star b, c \rangle = \sum_{d \ge 0} \langle a , b , c \rangle_d q^d \/.\end{equation} The equivariant GW invariants $\langle a , b , c \rangle_d$ are defined by \[ \langle a , b , c \rangle_d = \int_{[\overline{M}_{0,3}(X,d)]^{vir}} \ev_1^*(a) \cdot \ev_2^*(b) \cdot \ev_3^*(c) \/, \] where $[\overline{M}_{0,3}(X,d)]^{vir}$ is the virtual fundamental class associated to Kontsevich moduli space of stable maps $\overline{M}_{0,3}(X,d)$, and $\ev_i:\overline{M}_{0,3}(X,d) \to X$ are the corresponding evaluation maps. If $X=G/P$ is a flag manifold, then the moduli space is irreducible
with quotient singularities \cite{thomsen:irreducibility}, and the virtual class $[\overline{M}_{0,3}(X,d)]^{vir}$ is the usual fundamental class.

The (equivariant) quantum cohomology ring is not functorial with respect to morphisms $f: X \to Y$, however, it {\em is} functorial if $f$ is an isomorphism. More precisely, an isomorphism $f: X \to Y$ induces an isomorphism of the moduli spaces $\overline{f}:\overline{\mathcal{M}}_{0,3}(X,d) \to \overline{\mathcal{M}}_{0,3}(Y,f_*(d))$. If $X,Y$ are $T$-varieties then $f^*$ also induces isomorphisms of the equivariant cohomology rings of $X,Y$, and of the moduli spaces. From this we deduce that there is a $\Q[q]$-linear isomorphism $f^*: \QH^*_T(Y) \to \QH^*_T(X)$ between the (equivariant) quantum cohomology rings, {where the $T$-action on $Y$ is induced via $f$.}



\subsection{Quantum left divided difference operators} From now on we specialize to the situation 
$X^P=G/P$ is a flag manifold. A basis for the effective cone of curves is given by the Schubert classes $[X_{s_iW_P}]$, where $s_i \in W^P$. The ring $\QH^*_T(X^P)$ is a free $H^*_T(pt)[q]$-algebra with a basis given by Schubert classes $[X^{wW_P}] \otimes 1$; for simplicity we still denote these classes by $[X^{wW_P}]$.

Fix a Weyl group element $w \in W$. The left Weyl group multiplication $w^L$ is an automorphism of $X^P$, and, as explained above, it induces an automorphism of the moduli space $\Mb_{0,3}(X,d)$ by acting on the target of a stable map. More precisely, for $w \in W$ and 
$[C, p_1, p_2,p_3; \varphi] \in \Mb_{0,3}(X,d)$ the class of a stable map $\varphi:(C,p_1,p_2,p_3) \to X^P$,  
\[ w^L. [C, p_1, p_2,p_3; \varphi] := [C,p_1,p_2,p_3; w^L. \varphi] \/, \] where $(w^L.\varphi)(x) = w^L . (\varphi(x))$. 
Since the evaluation maps are $G$-equivariant, it follows that the corresponding automorphism of the equivariant cohomology ring 
$H^*_T(\Mb_{0,3}(X,d))$ satisfies \[ w^L . \ev_i^*(a) = \ev_i^*(w^L.a) \/,~ \forall a \in H^*_T(\Mb_{0,3}(X,d)) \/. \] From this we deduce that $w^L$ acts on equivariant GW invariants by 
\begin{equation*}\label{E:wGW} w^L. \langle a_1,a_2,a_3 \rangle_d = \langle w^L(a_1),w^L(a_2), w^L(a_3) \rangle_d \quad ~, \forall a_1,a_2,a_3 \in H^*_T(X^P) \/.\end{equation*} 
Then the left action of $W$ on $H^*_T(X^P)$ extends by $\Q[q]$-linearity to $\QH^*_T(X^P)$, and each 
$w \in W$ induces a $\Q[q]$-linear {\em ring} automorphism of the quantum cohomology $\QH^*_T(X^P)$: 
\begin{equation}\label{E:qhw} w^L. (a_1 \star a_2) = (w^L.a_1) \star (w^L .a_2) \/. \end{equation} 
It makes sense to define the (quantum) left divided difference operator as in equation \eqref{E:leftcohdiv} above:
\[ \delta_i := \frac{1}{\alpha_i}(id - s_i^L) \/. \] We summarize below its properties.

\begin{prop}\label{prop:quantumdelta} The operators $\delta_i: \QH^*_T(X^P) \to \QH^*_T(X^P)$ satisfy the following properties:

(a) $\delta_i$ is linear over $\Q[q]$ and the quantum operators $\delta_i$ satisfy the same quadratic and braid relations as the ordinary operators; see \S \ref{sec:cohdivdiff}.

(b) For each $w \in W^P$, \[ \delta_i [X^{wW_P}]= \begin{cases} [X^{s_i w W_P}] & \textrm{if } s_i w < w \/; \\ 0 & \textrm{otherwise} \end{cases} \]

(c) $\delta_i$ satisfies the (quantum) Leibniz rule i.e. for any $a,b \in \QH^*_T(X^P)$, \[ \delta_i(a \star b) = \delta_i(a) \star b + s_i^L(a) \star \delta_i(b) \/. \]

(d) The operator $\delta_i$ is a $\QH_G(G/P)$-module homomorphism, i.e. for any $\kappa \in \QH_G(G/P)$ and $\eta \in \QH_T(G/P)$, \[ \delta_i(\kappa \star \eta) = \kappa \star \delta_i(\eta) \/. \]

\end{prop}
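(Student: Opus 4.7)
The strategy is to exploit the observation established just above the proposition that $s_i^L$ extends to a $\Q[q]$-linear \emph{ring} automorphism of $(\QH^*_T(X^P), \star)$ (equation~\eqref{E:qhw}), and then to lift each statement formally from the classical cohomology case in \S\ref{sec:cohdivdiff}. Because $\delta_i = \frac{1}{\alpha_i}(id - s_i^L)$ has the same algebraic shape as in ordinary cohomology, all four assertions should follow from the same manipulations, only using the quantum product in place of the cup product.

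For part (a), I would first note that $\Q[q]$-linearity of $\delta_i$ is automatic from the $\Q[q]$-linearity of $s_i^L$. The quadratic relation $\delta_i^2=0$ reduces to $(s_i^L)^2=id$, and the braid relations reduce to the braid relations for $s_i^L$, both of which hold on $H^*_T(X^P)$ and extend $\Q[q]$-linearly to $\QH^*_T(X^P)$. The one thing to verify carefully is that $\delta_i$ preserves $\QH^*_T(X^P)$ rather than just some localization: since $\QH^*_T(X^P) = H^*_T(X^P) \otimes_{H^*_T(pt)} H^*_T(pt)[q]$ as $H^*_T(pt)[q]$-modules, and $s_i^L$ is $\Q[q]$-linear, one checks this coefficient by coefficient in $q$, reducing to the classical statement already recorded in \S\ref{sec:cohparabolic}. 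Part (b) follows from the same coefficient-by-coefficient reduction, applied to the Schubert basis $[X^{wW_P}] \otimes 1$: the action of $\delta_i$ agrees with its classical action on $[X^{wW_P}] \in H^*_T(X^P)$, which is the parabolic version of~\eqref{E:delschub}.

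For part (d), the plan is to use that a $G$-equivariant class $\kappa \in \QH_G(G/P)$ is fixed by the left $W$-action, i.e.\ $s_i^L(\kappa) = \kappa$ (the quantum analogue of Lemma~\ref{lemma:delcomm}(c), which again follows by $\Q[q]$-linearity and the classical fact). Then a one-line computation gives
\[ \delta_i(\kappa \star \eta) = \tfrac{1}{\alpha_i}\bigl(\kappa \star \eta - s_i^L(\kappa)\star s_i^L(\eta)\bigr) = \tfrac{1}{\alpha_i}\bigl(\kappa \star \eta - \kappa \star s_i^L(\eta)\bigr) = \kappa \star \delta_i(\eta). \]

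The heart of the proposition is the quantum Leibniz rule in (c). Once one knows that $s_i^L$ is a ring automorphism of $\star$, the same telescoping identity used in Lemma~\ref{lemma:delcomm}(b) works verbatim:
\[ \delta_i(a\star b) = \tfrac{1}{\alpha_i}\bigl(a\star b - s_i^L(a)\star s_i^L(b)\bigr) = \tfrac{a - s_i^L(a)}{\alpha_i}\star b + s_i^L(a)\star \tfrac{b - s_i^L(b)}{\alpha_i} = \delta_i(a)\star b + s_i^L(a)\star \delta_i(b). \]
Thus the entire proposition really rests on the single fact that $s_i^L$ acts as a ring automorphism of the quantum product, and this in turn has already been secured by the $G$-equivariance of the evaluation maps $\mathrm{ev}_i$, which gives $w^L\langle a_1,a_2,a_3\rangle_d = \langle w^L(a_1),w^L(a_2),w^L(a_3)\rangle_d$. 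The main (and really only) conceptual obstacle is precisely this invariance statement for the quantum product, which is why it was isolated in~\eqref{E:qhw}; the rest of the proof is formal bookkeeping, closely paralleling the classical case.
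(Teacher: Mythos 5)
Your proposal is correct and follows essentially the same route as the paper: everything is reduced to the single fact, recorded in equation~\eqref{E:qhw}, that $s_i^L$ is a $\Q[q]$-linear ring automorphism of $(\QH^*_T(X^P),\star)$, with (a),(b) handled by $\Q[q]$-linear extension of the classical operator and (c),(d) by the telescoping identity and $W$-invariance of $G$-equivariant classes. You spell out the bookkeeping (the $q$-coefficient reduction and the explicit telescoping) in somewhat more detail than the paper's terse proof, but the underlying argument is the same.
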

\begin{proof} By definition, the quantum operator $\delta_i$ is just the $\Q[q]$-linear extension of the ordinary operator on $H^*_T(X^P)$. Then (a) and (b) follow from the corresponding statements in the non-quantum case. Part (c) follows because $s_i^L$ is a 
ring automorphism of $\QH^*_T(X^P)$ and part (d) because $s_i^L$ is $\QH^*_G(X^P)$-linear.
\end{proof}



Let $w^P$ be the longest element in $W^P$. An immediate consequence of the proposition is that any quantum Schubert class may be obtained from the class of the point $[e_{w^P W_P}]$ by successively applying the left divided difference operators. If one knows a presentation of the equivariant quantum ring by generators and relations, this may be used to construct `double quantum Schubert polynomials' which represent Schubert classes. The initial step requires identifying the (quantum) class of the point, which is a nontrivial problem. In this context, the left divided difference operators were utilized by I. Ciocan-Fontanine \cite[App. J]{fulton.pragacz}, and Kirillov and Maeno \cite{kirillov.maeno:quantum-schubert} 
to investigate double quantum Schubert polynomials for a presentation of $\QH^*_T(\mathrm{SL}_n(\C)/B)$. {Lenart and Maeno 
\cite{lenart.maeno:quantum} generalized this to double quantum Grothedieck polynomials, and equivariant quantum K-theory.}
Observe that Proposition \ref{prop:quantumdelta} does not require the knowledge of a presentation.

\begin{example}\label{ex:qhgr24} Let $X^P := \mathrm{SL}_4(\C)/P = \Gr(2,4)$, the Grassmann manifold parametrizing linear subspaces of dimension $2$ in $\C^4$. In this case $P$ is the maximal parabolic group so that $W_P = \langle s_1,s_3 \rangle$. Let $\Delta = \{ \alpha_1, \alpha_2, \alpha_3 \}$ be the positive simple roots. The ring $\QH_T^*(\Gr(2,4))$ is generated as a $H^*_T(pt)[q]$-algebra by $\sigma_1 := [X^{s_2 W_P}]$ and $\sigma_{1,1}:=[X^{s_1 s_2 W_P}]$. (These 
correspond to the $B^-$-Schubert classes indexed by partitions $(1)$, respectively $(1,1)$.) By direct calculation using software such as the {\em Equivariant Schubert Calculator}\begin{footnote}{The program is available at \texttt{http://sites.math.rutgers.edu/\~{}asbuch/equivcalc/} }\end{footnote} the class of the point is \[ [X^{s_2 s_1 s_3 s_2 W_P}] = \sigma_{1,1} \star \sigma_{1,1} - \alpha_1 \sigma_1 \star \sigma_{1,1} \/. \] One obtains that \[ \delta_2(\sigma_1) = id; \quad  \delta_1(\sigma_{1,1}) = \sigma_1 \/; \quad \delta_1(\sigma_1) = \delta_3(\sigma_1) = \delta_2(\sigma_{1,1}) = \delta_3(\sigma_{1,1}) = 0 \/. \] From this and the Leibniz formula one obtains that \[ \begin{split} [X^{s_1 s_3 s_2 W_P}]  & = \delta_2 (\sigma_{1,1} \star \sigma_{1,1} - \alpha_1 \sigma_1 \star \sigma_{1,1}) \\ & = -\langle \alpha_1, \alpha_2^\vee \rangle \sigma_1 \star \sigma_{1,1} - s_2(\alpha_1) \delta_2(\sigma_1 \star \sigma_{1,1}) \\ & = \sigma_1 \star \sigma_{1,1} - (\alpha_1 +\alpha_2) \sigma_{1,1} \/. \end{split} \]
More generally, a determinantal formula for the equivariant quantum class of the point (in fact, for any Schubert class) in any Grassmannian is calculated in \cite{mihalcea:giambelli}. Interestingly, the class is independent of the quantum parameter $q$, and by Proposition \ref{prop:quantumdelta}, this implies that one may find polynomials representing equivariant quantum Schubert classes which are independent of $q$.~Similar results hold for the maximal orthogonal Grassmannians; see \cite{IMN:factorial}. 
\end{example}

\subsection{Quantum K-theory} Same ideas extend to the equivariant quantum K-theory ring. We sketch the construction below, 
in the case of the flag manifolds $X^P=G/P$. Our main references for (equivariant) quantum K-theory ring are \cite{lee:QK,buch.m:qk}.
We continue the notation from the previous section. 

Additively, the quantum K-theory ring is 
\[ \QK_T(X^P) = K_T(X^P) \otimes_{K_T(pt)} K_T(pt) [[q]] \/.\] In particular, it is a free $K_T(pt) [[q]]$-module with basis 
$\{ \cO^w \otimes 1 \}_{w \in W^P}$; for simplicity we keep the `classical' notation $\cO^w$ for the basis elements. There is a ring structure defined by 
Givental and Lee \cite{givental:onwdvv,lee:QK}; we denote by `$\circ$' the (equivariant) quantum K multiplication. The structure constants are determined by the K-theoretic Gromov-Witten invariants, defined by taking sheaf 
Euler characteristics on the moduli space of stable maps $\Mb_{0,3}(X^P,d)$. A single such structure constant is of the form 
\[ \chi(\Mb_{0,3}(X^P,d); \ev_1^*(a) \cdot \ev_2^*(b) \cdot \ev_3^*(c) \cdot \cO_{\Mb_{0,3}(X^P,d)}(-\partial \Mb)) \/, \] where 
$\partial \Mb$ is a certain divisor included in the boundary of the moduli space, independent of the classes $a,b,c$; see \cite{buch.m:qk}. It was proved in \cite{buch.m:qk,BCMP:qkfin} that quantum K multiplication is finite if $X^P$ is a cominuscule Grassmannian; recently this was 
generalized to any flag manifold by \cite{ACTI:finiteness,kato:loop}. Therefore for flag manifolds $X^P$ we may replace the power series ring 
$K_T(pt)[[q]]$ by the polynomial ring $K_T(pt)[q]$.

As in the quantum cohomology case, the 
equivariant quantum K-theory is functorial for isomorphisms. In fact, the same argument as in cohomology shows that the left action of 
$W$ on $K^*_T(X^P)$ extends by $\Q[q]$-linearity to $\QK_T(X^P)$, and that each 
$w \in W$ induces a $\Q[q]$-linear ring automorphism of $\QK_T(X^P)$. 
Then one defines the (quantum) left divided difference operators on $\QK_T(X^P)$ as in equation \eqref{E:KleftDem} above:
\begin{equation}\label{E:qDem}  \delta_i := \frac{1}{1- e^{\alpha_i}}(id - e^{\alpha_i} s_i^L) \/; \quad \delta_i^\vee := \frac{1}{1- e^{-\alpha_i}}(id - e^{-\alpha_i} s_i^L) \/. \end{equation} 
These operators have the same properties as the ordinary ones, except that the ordinary K-product is now replaced by the 
quantum K product. For reader's convenience, we state these properties; the proofs are the same as for the 
classical operators, using that $s_i^L$ is a $\Q[q]$-linear ring automorphism of $\QK_T(X^P)$. We use the dual operators, as they behave 
well with respect to opposite Schubert classes (cf.~ Remark \ref{rmk:dualDem}).
\begin{prop}\label{prop:QKDem} (a) The quantum operators $\delta_i^\vee$ are $\Q[q]$-linear, and satisfy the braid relations, and $(\delta_i^\vee)^2= \delta_i^\vee$. 

(b) For each $w \in W^P$, \[ \delta_i^\vee (\cO^{wW_P})= \begin{cases} \cO^{s_i w W_P} & \textrm{if } s_i w < w \/; \\ \cO^{wW_P} & \textrm{otherwise} \/. \end{cases} \]

(c) (Leibniz rule) For any $a, b \in \QK_T(X)$, \[ \delta_i^\vee(a \circ b) = \delta_i^\vee(a) \circ b + e^{-\alpha_i} s_i^L(a) \circ \delta_i^\vee(b) - e^{-\alpha_i} s_i^L(a) \circ s_i^L(b) \/. \]

(d) The operator $\delta_i^\vee$ is a $\QK_G(G/P)$-module homomorphism, i.e. for any $\kappa \in \QK_G(G/P)$ and $\eta \in \QK_T(G/P)$, \[ \delta_i^\vee(\kappa \circ \eta) = \kappa \circ \delta_i(\eta) \/. \]
\end{prop}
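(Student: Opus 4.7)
The proof will closely parallel the quantum cohomology case (Proposition \ref{prop:quantumdelta}), with the central ingredient being that $s_i^L$ extends to a $\Q[q]$-linear ring automorphism of $\QK_T(X^P)$. First I would verify this extension. Because $s_i^L$ is induced by the automorphism $\Phi_{s_i}: X^P \to X^P$, it induces an automorphism of each moduli space $\Mb_{0,3}(X^P, d)$, commuting with evaluation maps (since these are $G$-equivariant). Consequently, the K-theoretic three-point invariants
\[\chi\bigl(\Mb_{0,3}(X^P, d); \ev_1^*(a) \cdot \ev_2^*(b) \cdot \ev_3^*(c) \cdot \cO(-\partial\Mb)\bigr)\]
transform under $w^L$ in each argument via $w^L$, so Lee's metric/structure constants determining the product $\circ$ satisfy
\[w^L(a \circ b) = w^L(a) \circ w^L(b),\]
i.e.\ $w^L$ is a $\Q[q]$-linear ring automorphism (note $\partial \Mb$ is itself $G$-stable, hence its structure sheaf is fixed by $w^L$). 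This is the sole new input.

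With this in hand, part (a) is immediate: $\delta_i^\vee$ is defined by the same formula as in $K_T(X^P)$, but now with $s_i^L$ and scalars $e^{\pm \alpha_i}$ acting on the $\Q[q]$-linearly extended module. The braid relations and the idempotency $(\delta_i^\vee)^2 = \delta_i^\vee$ reduce to identities involving only $s_i^L$ and $K_T(pt)$-scalars; these identities were already verified in the classical case (\S\ref{sec:KDem}), and they are preserved under $\Q[q]$-linear extension. For part (b), the Schubert basis of $\QK_T(X^P)$ consists precisely of the classes $\cO^{wW_P} \otimes 1$, and $\delta_i^\vee$ acts on $\cO^{wW_P} \otimes 1$ via the $q$-independent action on $\cO^{wW_P} \in K_T(X^P)$, which was computed in Remark \ref{rmk:dualDem}.

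Parts (c) and (d) are formal consequences of $s_i^L$ being a $\Q[q]$-linear ring automorphism of $(\QK_T(X^P), \circ)$. For the Leibniz rule, I would expand, for $a,b \in \QK_T(X^P)$,
\[\delta_i^\vee(a \circ b) = \frac{1}{1 - e^{-\alpha_i}}\bigl((a \circ b) - e^{-\alpha_i}\, s_i^L(a) \circ s_i^L(b)\bigr),\]
then add and subtract $\frac{e^{-\alpha_i}}{1-e^{-\alpha_i}}\, s_i^L(a) \circ b$ and group terms to recover $\delta_i^\vee(a) \circ b + e^{-\alpha_i}\, s_i^L(a) \circ \delta_i^\vee(b) - e^{-\alpha_i}\, s_i^L(a) \circ s_i^L(b)$, exactly the classical identity but with $\cdot$ replaced by $\circ$. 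For (d), if $\kappa \in \QK_G(G/P)$ then $s_i^L(\kappa) = \kappa$ (the $G$-equivariant quantum K-ring is left-$W$ fixed, by the same automorphism argument applied to the $G$-equivariant GW invariants), and hence $\delta_i^\vee(\kappa \circ \eta) = \kappa \circ \delta_i^\vee(\eta)$ follows directly from the Leibniz formula applied to $a = \kappa$, $b = \eta$ (all terms involving $\delta_i^\vee(\kappa)$ or $s_i^L(\kappa) - \kappa$ vanish).

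The main subtlety is rigorously justifying that $s_i^L$ is a ring automorphism of $\QK_T(X^P)$; this reduces to a functoriality statement for the K-theoretic GW invariants under the non-equivariant automorphism $\Phi_{s_i}$, which in turn relies on (i) the induced automorphism of $\Mb_{0,3}(X^P, d)$ preserving the evaluation maps up to the same automorphism on $X^P$, and (ii) the boundary divisor $\partial \Mb$ being $G$-stable, hence $s_i^L$-invariant. Once this is in place, everything else is a direct transcription of the non-quantum arguments.
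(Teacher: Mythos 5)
Your proof follows essentially the same route as the paper: establish that $s_i^L$ extends to a $\Q[q]$-linear \emph{ring} automorphism of $\QK_T(X^P)$, then transcribe the classical arguments with $\cdot$ replaced by $\circ$. The paper states this tersely (``the same argument as in cohomology shows\dots''), so your filling in the $G$-stability of the boundary divisor $\partial\Mb$ is welcome detail rather than a divergence.

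One small inaccuracy in part (d): when $s_i^L(\kappa)=\kappa$, the operator $\delta_i^\vee$ does not annihilate $\kappa$; rather $\delta_i^\vee(\kappa)=\frac{1}{1-e^{-\alpha_i}}(\kappa-e^{-\alpha_i}\kappa)=\kappa$. So your parenthetical ``all terms involving $\delta_i^\vee(\kappa)\dots$ vanish'' is wrong as written. The conclusion still follows from the Leibniz formula after the substitutions $\delta_i^\vee(\kappa)=\kappa$, $s_i^L(\kappa)=\kappa$ and a short manipulation, but it is cleaner to bypass Leibniz altogether: since $s_i^L$ is a ring automorphism fixing $\kappa$,
\[
\delta_i^\vee(\kappa\circ\eta)=\tfrac{1}{1-e^{-\alpha_i}}\bigl(\kappa\circ\eta - e^{-\alpha_i}\,\kappa\circ s_i^L(\eta)\bigr)=\kappa\circ\delta_i^\vee(\eta),
\]
which is the argument the paper implicitly invokes. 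Otherwise the proposal is sound.
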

\begin{example}\label{ex:QKGr} We upgrade Example \ref{ex:qhgr24} to the ring $\QK_T(\Gr(2,4))$. Let $\cO_1:= \cO^{s_2 W_P}$ and $\cO_{1,1}:= \cO^{s_1 s_2 W_P}$. These classes generate the ring over $K_T(pt)[q]$. For a positive simple root $\alpha_i$, denote by $T_i := e^{\alpha_i}$. Using again the {\em Equivariant Schubert Calculator} (this time based on results from \cite{buch.m:qk}), one calculates that the class of the point in $\QK_T(\Gr(2,4))$ is: \[ \cO^{s_2s_1 s_3 s_2W_P} = T_1^{-1} \cO_{1,1} \circ \cO_{1,1} + (1- T_1^{-1}) \cO_1 \circ \cO_{1,1} \/. \] One calculates that \[ \delta_2^\vee (\cO_{1,1}) = s_2^L (\cO_{1,1}) = \cO_{1,1}; \quad \delta_2^\vee(\cO_1) = 1\/; \quad s_2^L(\cO_1) = T_2^{-1} \cO_1 + 1 - T_2^{-1} \/. \] Using the Leibniz rule one calculates that $\delta_2^\vee(T_1^{-1} \cO_{1,1} \circ \cO_{1,1}) = 0$ and that \[ \delta_2^\vee((1- T_1^{-1}) \cO_1 \circ \cO_{1,1}) = (T_1 T_2)^{-1} \cO_1 \circ \cO_{1,1} + (1- (T_1T_2)^{-1}) \cO_{1,1} \/. \] One may check directly that the latter class equals to $\cO^{s_1s_3 s_2 W_P}$, as claimed in Proposition \ref{prop:QKDem}.
\end{example}

\begin{remark} The definition of the quantum Demazure operators from \eqref{E:qDem} makes sense for the equivariant K-theory $\QK_T(X)$ of any complex, projective variety $X$ with a $G$-action. In the non-quantum case, left Demazure operators on $K_T(X)$ are constructed in the Appendix below, and coincide with those from \cite{harada.landweber.sjamaar:divided}. The quantum generalization follows the same arguments utilized in the $G/P$ case.

Further, using the definitions from \S \ref{ss:leftDL}, one may also define a quantum version of the DL operators on $\QK_T(X)[y]$. If $X=G/P$, this will satisfy 
an analogue of Theorems \ref{thm:LDLMCP} and \ref{thm:SMCTL}. At this time we do not know a good geometric interpretation for the structure constants in the quantum multiplication of motivic Chern classes, or Segre motivic classes, of Schubert cells.\end{remark} 

\section{Appendix: Demazure operators via convolution}\label{appendix} In this appendix we generalize the construction of the left Demazure operators to the $T$-equivariant $K$-theory of any quasi projective variety $X$ which admits a $G$-action. Then we show that these operators are equal to certain convolution operators. If $X=G/B$, we also construct both the left and right Demazure operators via convolutions. This will be useful in relating the operators defined in this paper (via left/right $W$-action) to the convolution operators such as those in \cite[Remark 3.3]{lenart.zhong.etal:parabolic}. As shown in Proposition \ref{prop:eqdelta}, our construction gives the same operators as the ones defined in \cite{harada.landweber.sjamaar:divided}. {All these constructions specialize to the equivariant cohomology setting.}


\subsection{Left Demazure operator in the general setting}
In this appendix, a {\em variety} is a complex quasi-projective integral scheme; all algebraic groups are defined over $\C$. We recall few facts about the `induction' and 'restriction' functors; we refer to \cite[\S2]{brion:multiplicity} for details; see also \cite[\S 5.2.16]{chriss2009representation}. For a variety $X$ with an action of an algebraic group $H$, we denote by $Sh_H(X)$ the abelian category of $H$-linearized sheaves on $X$. 

Consider the algebraic equivariant $K$-theory group $K_H(X)$ consisting of the Grothendieck group of $H$-linearized coherent sheaves on $X$. The tensor product gives $K_H(X)$ a module structure over $K^H(X)$, the Grothendieck ring of $H$-equivariant vector bundles on $X$. We refer to \cite{chriss2009representation,harada.landweber.sjamaar:divided} for details. 

Fix $ T \subset B \subset G$ as in section \S \ref{sec:flags}, where we assume in addition that $G$ is simply connected. Consider a variety $X$ with a $B$-action. The restriction map induces an isomorphism $K_B(X) = K_T(X)$; see \cite[\S5.2.18]{chriss2009representation}. 
Let $B$ act on $G \times X$ by $b.(g,x) = (gb^{-1}, bx)$, and let $G \times^B X $ denote the quotient of $G\times X$ by this free action of $B$. We denote by $[g,x]$ the equivalence class of $(g,x)$. The variety $G \times^B X$ has a $G$ action given by left multiplication and it comes equipped with a projection morphism $pr: G \times^B X \to G/B$ sending $[g,x] \mapsto gB$. The fibre $pr^{-1}(1.B) \simeq X$ and we denote by $\iota: X \hookrightarrow G \times^B X$ the inclusion. We need the following fact - cf.~\cite[Lemma 2]{brion:multiplicity}.

\begin{lemma}\label{lemma:catequiv} Let $\mathcal{F}$ be a $B$-linearized sheaf on $X$ and let $\mathcal{G}$ be a $G$-linearized sheaf of $G \times^B X$. Then there are isomorphisms \[ \mathcal{F} \simeq \iota^* (G \times^B \mathcal{F}) \/; \quad \mathcal{G} \simeq G \times^B \iota^* \mathcal{G} \/, \] where the first isomorphism is as $B$-linearized sheaves on $X$ and the second as $G$-linearized sheaves on $G \times^B X$.
In particular, this induces an equivalence of categories $\Phi: Sh_B(X) \simeq Sh_G (G \times^B X)$ given by `induction' functor $\Phi (\mathcal{F}) = G \times^B \mathcal{F}$ and with inverse given by `restriction' $\mathcal{G} \mapsto \iota^* \mathcal{G}$. 
\end{lemma}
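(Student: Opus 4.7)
The plan is to invoke faithfully flat descent along the projection $q\colon G\times X\to G\times^B X$, which is a principal $B$-bundle (in particular faithfully flat and smooth) because $B$ acts freely on $G\times X$ via $b.(g,x)=(gb^{-1},bx)$. First I would construct the induction functor explicitly: given $\mathcal{F}\in Sh_B(X)$, the external tensor product $\mathcal{O}_G\boxtimes\mathcal{F}$ on $G\times X$ inherits a $B$-linearization combining the right-translation linearization of $\mathcal{O}_G$ with the given linearization of $\mathcal{F}$, together with a commuting $G$-linearization by left translation on the first factor. Faithfully flat descent along $q$ then produces a $G$-linearized sheaf on $G\times^B X$, which by definition is $G\times^B\mathcal{F}$. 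The candidate inverse functor is the pullback $\iota^*\colon Sh_G(G\times^B X)\to Sh_B(X)$, which is well-defined because $\iota$ is $B$-equivariant for the restriction of the $G$-action on $G\times^B X$.

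The isomorphism $\mathcal{F}\simeq\iota^*(G\times^B\mathcal{F})$ is then essentially tautological: pulling back $G\times^B\mathcal{F}$ along $\iota$ agrees with pulling back its $q$-preimage $\mathcal{O}_G\boxtimes\mathcal{F}$ along $\{1\}\times X\hookrightarrow G\times X$, and this recovers $\mathcal{F}$ with its original $B$-linearization.

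For the substantive isomorphism $\mathcal{G}\simeq G\times^B\iota^*\mathcal{G}$, I would exploit the $G$-linearization of $\mathcal{G}$. Let $a\colon G\times(G\times^B X)\to G\times^B X$ denote the $G$-action. A direct check shows that $a\circ(\mathrm{id}_G,\iota)=q$, so the canonical isomorphism $a^*\mathcal{G}\simeq\mathrm{pr}_2^*\mathcal{G}$ furnished by the $G$-linearization pulls back to an isomorphism $q^*\mathcal{G}\simeq\mathcal{O}_G\boxtimes\iota^*\mathcal{G}$ of sheaves on $G\times X$. Verifying that this identification respects descent data along $q$ yields the desired isomorphism on $G\times^B X$, and one checks it is $G$-equivariant using the associativity of the $G$-action on $\mathcal{G}$.

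The main obstacle is precisely the compatibility check in the last step: one must confirm that the descent datum on $q^*\mathcal{G}$ induced from the $G$-linearization of $\mathcal{G}$ agrees, under the identification $q^*\mathcal{G}\simeq\mathcal{O}_G\boxtimes\iota^*\mathcal{G}$, with the natural descent datum on $\mathcal{O}_G\boxtimes\iota^*\mathcal{G}$. This reduces to a cocycle identity on $B\times G\times X$ which follows from the cocycle condition on the $G$-linearization restricted to the subgroup $B$. Once this is in place, the two natural isomorphisms assemble into an adjoint equivalence $\Phi\colon Sh_B(X)\simeq Sh_G(G\times^B X)$.
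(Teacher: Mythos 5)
The paper does not actually prove Lemma \ref{lemma:catequiv}; it simply cites \cite[Lemma 2]{brion:multiplicity}, so there is no in-paper argument to compare against. Your plan is the standard way to establish this induction equivalence, and it is correct: you build $G\times^B\mathcal{F}$ by faithfully flat descent of $\mathcal{O}_G\boxtimes\mathcal{F}$ along the principal $B$-bundle $q\colon G\times X\to G\times^B X$, observe that the first isomorphism is tautological, and for the second you correctly identify the key commutative triangle $a\circ(\mathrm{id}_G,\iota)=q$ together with $\mathrm{pr}_2\circ(\mathrm{id}_G,\iota)=\iota\circ p_X$, so that pulling back the linearization isomorphism $a^*\mathcal{G}\simeq\mathrm{pr}_2^*\mathcal{G}$ yields $q^*\mathcal{G}\simeq\mathcal{O}_G\boxtimes\iota^*\mathcal{G}$. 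You also correctly flag the one point that requires care --- matching the tautological descent datum on $q^*\mathcal{G}$ with the descent datum defining $G\times^B(\iota^*\mathcal{G})$ --- and correctly locate its resolution in the cocycle condition for the $G$-linearization of $\mathcal{G}$, restricted along $B\times G\times X\to G\times G\times(G\times^B X)$. This is exactly the compatibility that makes the two sheaves isomorphic as objects of $Sh_G(G\times^B X)$ rather than merely as abstract sheaves; spelling out that diagram would complete the write-up, but the plan as stated has no gaps.
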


Assume now in addition that $X$ is a $G$-variety. Then we have in addition a multiplication morphism $m: G \times^B X \to X$ given by $[g,x] \mapsto g.x$. \[ \xymatrix{ X \ar[rr]^{\iota} \ar[d] & &  G \times^B X \ar[d]^{pr} \ar[rr]^{m} & & X \\ 1.B \ar[rr] && G/B &&}\] In this case $(pr,m): G \times^B X \to G/B \times X$ is an isomorphism of varieties and $m \circ \iota = id_X$. Using these facts and Lemma \ref{lemma:catequiv} one may prove the following Lemma. We leave the proof details to the reader. 
\begin{lemma}\label{lemma:product} (a) Let $(\mathbb{C}_\lambda)_X$ be the trivial $B$-module on $X$ with weight $\lambda$ and $\mathcal{L}_\lambda = G \times^B \mathbb{C}_\lambda$ (a line bundle on $G/B$). Then \[ G \times^B (\mathbb{C}_\lambda)_ X = pr^* \mathcal{L}_\lambda \/. \]

(b) For any $G$-linearized sheaf $\mathcal{F}$ on $X$, we have \[ m^*\mathcal{F} \simeq G \times^B \mathcal{F} \/. \]

(c) Let $\mathcal{F}$ be a $G$-linearized sheaf on $X$. Then the isomorphism $\Psi: K_B(X) \to K_G(G/B \times X)$ induced by $(pr \times m) \circ \Phi$ sends \[ \Psi [(\mathbb{C}_\lambda)_X \otimes \mathcal{F}] = [\mathcal{L}_\lambda \boxtimes \mathcal{F}] \/. \]
Furthermore, this isomorphism is linear with respect to $K_G(pt) \subset K_B(pt)$. (Recall that $K_G(pt) = K_B(pt)^W$.)
\end{lemma}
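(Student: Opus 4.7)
The plan is to treat the three parts in order, using as the main structural tool the $G$-equivariant isomorphism $(pr,m):G\times^B X\simeq G/B\times X$ (where the target has the diagonal $G$-action), together with the equivalence $\Phi$ from Lemma \ref{lemma:catequiv} which is a symmetric monoidal equivalence of categories of linearized sheaves.

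For part (a), I would construct an explicit isomorphism of $G$-linearized line bundles on $G\times^B X$. By definition, the total space of $G\times^B (\bbC_\lambda)_X$ is the quotient of $G\times X\times\bbC$ by the $B$-action $b.(g,x,v)=(gb^{-1},bx,\lambda(b)v)$. The total space of $pr^*\caL_\lambda$ consists of pairs $([g,x],[h,v])\in (G\times^B X)\times\caL_\lambda$ with $gB=hB$. The map $(g,x,v)\mapsto ([g,x],[g,v])$ is $B$-invariant under the diagonal action and factors through a morphism which is a fiberwise isomorphism of line bundles, hence an isomorphism of sheaves. Equivariance for the $G$-action is immediate.

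For part (b), observe that under the isomorphism $(pr,m)$ the multiplication $m$ corresponds to the second projection $p_2:G/B\times X\to X$. Thus $m^*\caF$ is identified with $p_2^*\caF$. Independently, the $G$-linearization of $\caF$ gives a morphism $G\times\caF\to m^*\caF$ sending a local section $(g,\phi)$ (where $\phi$ is a section of $\caF$ near $x$) to the section $g\cdot\phi$ of $\caF$ near $gx$; this is $B$-invariant for the restricted $B$-linearization and descends to the desired isomorphism $G\times^B\caF\to m^*\caF$. Either approach gives the result; I would carry out the second construction explicitly to make the $G$-linearization on $G\times^B\caF$ manifest.

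For part (c), the two previous parts combine immediately. Since $\Phi$ is a symmetric monoidal equivalence,
\[\Phi[(\bbC_\lambda)_X\otimes\caF]=[G\times^B (\bbC_\lambda)_X]\otimes [G\times^B\caF]=[pr^*\caL_\lambda\otimes m^*\caF]\]
by parts (a) and (b). Transporting along $(pr,m):G\times^B X\simeq G/B\times X$, where $pr$ becomes $p_1$ and $m$ becomes $p_2$, yields $[p_1^*\caL_\lambda\otimes p_2^*\caF]=[\caL_\lambda\boxtimes\caF]$, as desired. For the $K_G(pt)$-linearity claim, note that the $K_G(pt)$-module structure on both sides is induced by external tensor product with pullbacks of $G$-modules from $pt$; since $\Phi$ and the isomorphism $(pr,m)$ are both $G$-equivariant and send pullbacks from $pt$ to pullbacks from $pt$, they commute with this structure. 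The main bookkeeping obstacle is confirming compatibility of the various $B$-actions in part (a) and checking that $\Phi$ is genuinely monoidal (tensor products of $B$-linearizations go to tensor products of $G$-linearizations on $G\times^B X$); once this is in place, the rest is a routine identification.
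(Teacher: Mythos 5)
The paper itself leaves the proof of this lemma to the reader, offering only the pointers ``use $(pr,m)$ being an isomorphism, $m\circ\iota=\mathrm{id}_X$, and Lemma~\ref{lemma:catequiv}''; your argument is exactly a fleshing-out of those pointers and is correct. Your explicit isomorphism in (a) is fine, and your two approaches to (b) both work. For (c) and the $K_G(pt)$-linearity, the one step you flag as ``bookkeeping'' -- that the induction equivalence $\Phi$ is monoidal -- can be disposed of cleanly by observing that its inverse $\Phi^{-1}=\iota^*$ is a pullback and hence tautologically monoidal (and sends pullbacks of $G$-modules from $pt$ to pullbacks from $pt$); this also makes your direct construction in (b) unnecessary, since $m^*\mathcal{F}$ is a $G$-linearized sheaf on $G\times^B X$ with $\iota^*(m^*\mathcal{F})=(m\circ\iota)^*\mathcal{F}=\mathcal{F}$, and by Lemma~\ref{lemma:catequiv} it must therefore equal $G\times^B\mathcal{F}$.
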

If one applies this Lemma for $X=pt$, then one obtains the isomorphism of $K_G(pt)$-algebras $R(T) \simeq K_B(pt) \simeq K_G(G/B)$, sending $e^\lambda \mapsto [\mathcal{L}_\lambda]$. By a theorem of Pittie, since $G$ is simply connected, $K_G(G/B)$ is a free $R(G)$-module; see~e.g.~\cite[Cor. 6.1.8]{chriss2009representation}. 

In general, by the K{\"u}nneth formula \cite[Thm.~5.6.1]{chriss2009representation}, we have $K_G(pt)=R(G)$-module isomorphisms $K_B(X) \simeq K_G(G/B \times X) \simeq K_G(G/B) \otimes_{R(G)} K_G(X) \simeq R(T)  \otimes_{R(G)} K_G(X)$. As explained in 
Thm.~6.1.22 of {\em loc.~cit.} the  isomorphism of the end terms is given by
\begin{equation}\label{E:kunneth} R(T) \otimes_{R(G)} K_G(X) \to K_B(X); \quad e^\lambda \otimes [\mathcal{F}] \mapsto [\C_\lambda] \otimes [\mathcal{F}] \/, \end{equation} obtained by regarding a $G$-linearized sheaf $\mathcal{F}$ as $B$-linearized. 

Next we apply these facts to define a convolution operator as in ~\cite[\S 5.2.20]{chriss2009representation}. Let $\pi_{i,j}$ be the projection from $G/B\times G/B\times X$ to the $i$ and $j$ factors, where $i\neq j\in \{1,2,3\}$. For any $a \in K_G(G/B\times G/B)$ and $b \in K_G(G/B \times X)$ define their convolution product to be 
\[a\star b:=(\pi_{1,3})_*(\pi_{1,2}^*(a)\otimes \pi_{2,3}^*(b)).\] 
For any simple root $\alpha_i$, consider the closure of the $G$-orbit: $Y_i := \overline{G .(1,e_{s_i})} \subset G/B \times G/B$. For any $B$-linearized sheaf $\mathcal{F}$ on $X$, the left convolution by the class $[\mathcal{O}_{Y_i}]\in K_G(G/B\times G/B)$ gives an operator $\widetilde{\delta}_i \in End_{K_G(pt)} (K_B(X))$ defined by: 
\[ \widetilde{\delta}_i[\mathcal{F}] = \Psi^{-1}([\mathcal{O}_{Y_i}] \star [G \times^B \mathcal{F}]) \/. \] 

Since $X$ is a $G$-variety, the automorphism $\Phi_w$ from \S \ref{sec:twoWactions} induces an action of the Weyl group $W$ on both $K_B(X)$ and $K^B(X)$, denoted by $w^L$ (for $w \in W$). This action satisfies $w^L[E \otimes \mathcal{F}] = w^L[E] \otimes w^L[\mathcal{F}]$ for $[E] \in K^B(X)$ and $[\mathcal{F}] \in K_B(X)$. By \cite[Thm.~6.1.22]{chriss2009representation}, there the natural inclusion gives an isomorphism $K_B(X)^W \simeq K_G(X)$. Then we may also define a generalization of the operator $\delta_i$ from \S \ref{sec:KDem}: \[ \delta_i = \frac{id}{1- e^{\alpha_i}} -\frac{ e^{\alpha_i} }{1- e^{\alpha_i}} s_i^L\/ \in End_{R(G)}(K_B(X)) \/. \] Denote by $\partial_i' $ the Demazure operator $\delta_i$ in the case $X=pt$; thus $\partial_i'$ acts on $R(T)= K_B(pt)$. Our main result is the following:
\begin{prop}\label{prop:eqdelta} (a) There is an equality of $K^G(X)$-linear endomorphisms $\delta_i= \widetilde{\delta}_i$ in $End_{R(G)} (K_B(X))$.

(b) Under the K{\"u}nneth isomorphism $K_B(X) \simeq R(T) \otimes_{R(G)} K_G(X)$ from \eqref{E:kunneth}, $\delta_i = \partial_{i}' \otimes 1$, where $1$ denotes the identity.\end{prop}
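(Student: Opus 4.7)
The plan is to establish part (b) first via a direct computation with the K{\"u}nneth decomposition, and then deduce part (a) by showing that $\widetilde{\delta}_i$ respects the same decomposition, thereby reducing (a) to the classical case $X=pt$.

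For part (b), under the K{\"u}nneth isomorphism \eqref{E:kunneth}, the element $e^\lambda \otimes [\mathcal{F}]$ is identified with $[\mathbb{C}_\lambda \otimes \mathcal{F}] \in K_B(X)$. Since $\mathcal{F}$ is $G$-linearized, the $G$-action on $X$ supplies a $B$-equivariant isomorphism $\Phi_{s_i}^*\mathcal{F} \simeq \mathcal{F}$ (with the $B$-structure twisted by $\varphi_{s_i}$), while $\Phi_{s_i}^*(\mathbb{C}_\lambda)_X = (\mathbb{C}_{s_i\lambda})_X$; hence $s_i^L(e^\lambda \otimes [\mathcal{F}]) = e^{s_i\lambda} \otimes [\mathcal{F}]$. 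Substituting into the formula for $\delta_i$ yields
\[ \delta_i(e^\lambda \otimes [\mathcal{F}]) = \frac{e^\lambda - e^{\alpha_i + s_i\lambda}}{1 - e^{\alpha_i}} \otimes [\mathcal{F}] = \partial_i'(e^\lambda) \otimes [\mathcal{F}], \]
which proves (b).

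For part (a), I first verify both operators are $K^G(X)$-linear. For $\delta_i$ this is immediate from the formula, since $s_i^L$ fixes $G$-equivariant classes. For $\widetilde{\delta}_i$, Lemma \ref{lemma:product} gives $\Psi([E \otimes \mathcal{F}]) = \pi_2^*[E] \cdot \Psi[\mathcal{F}]$ whenever $E$ is $G$-equivariant on $X$, so applying the projection formula to $(\pi_{1,3})_*$ (with kernel $[\mathcal{O}_{Y_i}]$ pulled back from the first two factors of $G/B \times G/B \times X$) yields $[\mathcal{O}_{Y_i}] \star (\pi_2^*[E] \cdot \Psi[\mathcal{F}]) = \pi_2^*[E] \cdot ([\mathcal{O}_{Y_i}] \star \Psi[\mathcal{F}])$. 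The same style of argument shows that $\widetilde{\delta}_i$ factors through the K{\"u}nneth decomposition $K_G(G/B \times X) \simeq K_G(G/B) \otimes_{R(G)} K_G(X)$ as $\widetilde{\delta}_i^{pt} \otimes 1$. Combined with part (b), the identity $\delta_i = \widetilde{\delta}_i$ reduces to the case $X=pt$, where $Y_i = G/B \times_{G/P_i} G/B$ gives the well-known identification of convolution by $[\mathcal{O}_{Y_i}]$ on $K_G(G/B)$ with the push-pull $\pi_i^*(\pi_i)_*$, i.e.~the classical Demazure operator $\partial_i'$ on $R(T) = K_G(G/B)$.

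The main obstacle is the careful verification that convolution with $[\mathcal{O}_{Y_i}]$ respects the K{\"u}nneth decomposition over $R(G)$ of $K_G(G/B\times X)$; this requires combining the identifications in Lemmas \ref{lemma:catequiv} and \ref{lemma:product} with the projection formula and flat base change for the equivariant pushforward along $\pi_{1,3}$. Once this factorization is in place, both halves of the proposition reduce to the classical identification of left convolution by $[\mathcal{O}_{Y_i}]$ with the Demazure operator on $R(T)$.
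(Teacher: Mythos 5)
Your proof is correct and takes essentially the same route as the paper's: both proofs reduce via the K\"unneth decomposition to classes of the form $e^\lambda \otimes [\mathcal{F}]$ with $\mathcal{F}$ a $G$-linearized sheaf, use that $s_i^L$ fixes such $[\mathcal{F}]$ so that $\delta_i$ acts as $\partial_i' \otimes 1$, and show that the convolution by $[\mathcal{O}_{Y_i}]$ only affects the $G/B$ factor via the fibre-square/projection-formula argument, reducing to the classical identification on $K_G(G/B) \simeq R(T)$. The only difference is organizational: you prove (b) first by a direct computation with the formula for $\delta_i$ and then deduce (a), whereas the paper computes $\widetilde{\delta}_i[\C_\lambda \otimes \mathcal{F}]$ explicitly via the fibre diagram $Y_i \simeq G/B \times_{G/P_i} G/B$, arrives at $\partial_i'(e^\lambda) \otimes [\mathcal{F}]$, and observes this equals $\delta_i[\C_\lambda \otimes \mathcal{F}]$; both give (a) and (b) in one stroke.
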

\begin{proof} By the K{\"u}nneth formula from \eqref{E:kunneth}, it suffices to check the claim on elements of the form $[\C_\lambda \otimes \mathcal{F}]\in K_B(X)$, where $\mathcal{F}$ is $G$-linearized. By Lemma \ref{lemma:product}, under the sequence of isomorphisms $K_B(X) \simeq K_G(G \times^B X) \simeq K_G(G/B \times X)$, the class $[\C_\lambda \otimes \mathcal{F}]$ is sent to $[\mathcal{L}_\lambda \boxtimes \mathcal{F}] \in K_G(G/B \times X)$. Recall the fibre diagram 
\[ \xymatrix{ Y_i \simeq G/B \times_{G/P_i} G/B \ar[d]^{pr_1} \ar[rrr]^{pr_2} & &&G/B \ar[d]^{\pi_i} \\ G/B \ar[rrr]^{\pi_i} &&& G/P_i } \] 
where $P_i$ is the minimal parabolic. It follows from the definition $\partial_i = \pi_i^* (\pi_i)_*$ that for any $a \in K_B(G/P_i)$, $\partial_i(a) = (pr_2)_*([\cO_{Y_i}] \cdot pr_1^*(a) )$. From this and the definition of the convolution (and the judicious use of the projection formula) we deduce that \[ 
[\cO_{Y_i}] \star [\mathcal{L}_\lambda \boxtimes \mathcal{F}] = \partial_i[\mathcal{L}_\lambda] \boxtimes [\mathcal{F}] \/. \] Since the inverse $\Psi^{-1}$ is given by its restriction to the fibre $pr^{-1}(1.B)$, \[  \widetilde{\delta}_i[\C_\lambda \otimes \mathcal{F}] = \iota^* (\partial_i[\mathcal{L}_\lambda]) \otimes \iota^* m^* [\mathcal{F}] = (\partial_i[\mathcal{L}_\lambda]|_{1.B}) \otimes [\mathcal{F}] \/, \] because $m \circ \iota = id_{X}$. Here $\partial_i[\mathcal{L}_\lambda]|_{1.B} \in K_B(pt)$ denotes the localization of $\partial_i[ \mathcal{L}_\lambda] \in K_B(G/B)$ at the fixed point $1.B$. By definition, \begin{equation}\label{equ:partiali}
 \partial_i[\mathcal{L}_\lambda]|_{1.B} = \frac{e^\lambda - e^{\alpha_i} [\mathcal{L}_\lambda]|_{s_i}}{1-e^{\alpha_i}} = \frac{e^\lambda - e^{\alpha_i} e^{s_i(\lambda)}}{1- e^{\alpha_i}}  = \partial_i'(e^\lambda)\/, 
\end{equation}
where the last equality is by definition of $\partial_i'$. It follows that \[ \partial_i[\mathcal{L}_\lambda]|_{1.B} \otimes  \mathcal{O}_{X} = \delta_i[\C_\lambda \otimes \mathcal{O}_{X} ]\/. \] Since $\delta_i[\C_\lambda \otimes \mathcal{F}] = \delta_i[\C_\lambda \otimes \cO_X] \otimes [\mathcal{F}]$ (from definition, since $[\mathcal{F}] \in K_G(X)$), this proves the claim.  
\end{proof}
The analogue of the operators $\delta_i$ in $H^*_T(X)$ appeared in Peterson's notes \cite{peterson}. Proposition \ref{prop:eqdelta}(b) implies that $\delta_i$ is also equal to the operator denoted by $\delta_{\alpha_i,X}$ in \cite[\S 6]{harada.landweber.sjamaar:divided}.

\subsection{Left and Right Demazure operators for $G/B$}\label{appex:two} {
In this section we take $X=G/B$, and we use the previous results of this section to give two additional constructions of the left and right Demazure operators for the complete flag variety $G/B$. The constructions have the merit of making clear the symmetry between the left and right operators. Recall that $K_B(G/B) = K_T(G/B)$.}

\subsubsection{Convolution approach} Next we
construct the left and the right Demazure operators as convolution operators. As in the proof of the Proposition \ref{prop:eqdelta}, observe that for any $G$-linearized sheaf $\mathcal{F}$ on $G/B$, and for any torus weight $\lambda$, 
\begin{equation*}\label{E:del}
[\mathcal{O}_{Y_i}]\star [\mathcal{L}_\lambda\boxtimes\mathcal{F}]=\partial_i[\mathcal{L}_\lambda]\boxtimes[\mathcal{F}] \/; \quad 
[\mathcal{L}_\lambda \boxtimes\mathcal{F}]\star [\mathcal{O}_{Y_i}]=[\mathcal{L}_\lambda]\boxtimes\partial_i[\mathcal{F}] \/.
\end{equation*} By Lemma \ref{lemma:product}, these convolution products give the 
$K_G(pt)$-linear endomorphisms of $K_B(G/B)$ defined by: 
\[ \widetilde{\delta}_i[\C_\lambda \otimes \mathcal{F}] :=\Psi^{-1}(\partial_i[\mathcal{L}_\lambda] \boxtimes [\mathcal{F}])\/; \quad 
\widetilde{\partial}_i[\C_\lambda \otimes \mathcal{F}] :=\Psi^{-1}([\mathcal{L}_\lambda] \boxtimes \partial_i[\mathcal{F}]) \/. \]
\begin{prop}\label{prop:convos} With notation as in \S \ref{sec:KDem}, $\widetilde{\delta}_i = \delta_i$ and $\widetilde{\partial}_i = \partial_i$.\end{prop}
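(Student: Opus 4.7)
The first assertion $\widetilde{\delta}_i = \delta_i$ requires no new work: it is exactly Proposition~\ref{prop:eqdelta}(a) applied to the $G$-variety $X = G/B$, since the left convolution operator on $K_G(G/B \times G/B)$ transports under $\Psi$ to the operator $\widetilde{\delta}_i$ on $K_B(G/B) = K_T(G/B)$, and that proposition identifies it with $\delta_i$.

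The plan for $\widetilde{\partial}_i = \partial_i$ is to reduce to the K\"unneth decomposition $K_B(G/B) \simeq R(T) \otimes_{R(G)} K_G(G/B)$ of \eqref{E:kunneth}. Under this isomorphism, every class can be written as a sum of classes of the form $[\C_\lambda \otimes \mathcal{F}]$ with $\mathcal{F}$ a $G$-linearized coherent sheaf on $G/B$; by $R(G)$-linearity of both $\widetilde{\partial}_i$ and $\partial_i$, it suffices to check the equality on such classes.

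The key input is the convolution identity $[\mathcal{L}_\lambda \boxtimes \mathcal{F}] \star [\mathcal{O}_{Y_i}] = [\mathcal{L}_\lambda] \boxtimes \partial_i[\mathcal{F}]$ stated just above the proposition, which in turn follows from the definition of $\star$, the fibre-product description $Y_i = G/B \times_{G/P_i} G/B$, base change along $\pi_i$, and the projection formula (exactly mirroring the calculation for left convolution carried out in the proof of Proposition~\ref{prop:eqdelta}). Applying $\Psi^{-1}$ to this identity and using Lemma~\ref{lemma:product}(c) yields
\[
\widetilde{\partial}_i[\C_\lambda \otimes \mathcal{F}] = [\C_\lambda \otimes \partial_i\mathcal{F}].
\]
On the other hand, since $\pi_i$ is $G$-equivariant, $\partial_i = \pi_i^* (\pi_i)_*$ preserves $K_G(G/B)$, and by Lemma~\ref{lemma:Kalg}(a) it is $K_T(pt)$-linear; so in $K_T(G/B)$ one has $\partial_i[\C_\lambda \otimes \mathcal{F}] = e^\lambda \cdot \partial_i[\mathcal{F}] = [\C_\lambda \otimes \partial_i\mathcal{F}]$. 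Comparing the two expressions gives $\widetilde{\partial}_i = \partial_i$.

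The only mildly subtle step is checking the convolution formula itself; once that is in hand, everything else is bookkeeping with the induction/restriction equivalence $\Phi$ and the projection formula. There are no serious obstacles because the geometric situation is entirely parallel to the left case already handled in Proposition~\ref{prop:eqdelta}, and the K\"unneth decomposition lets us exploit the fact that Demazure operators commute with tensoring by pullbacks of characters.
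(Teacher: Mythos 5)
Your argument is correct and follows the same route as the paper: cite Proposition~\ref{prop:eqdelta}(a) for $\widetilde{\delta}_i=\delta_i$, reduce to classes of the form $[\C_\lambda\otimes\mathcal{F}]$ by the K\"unneth decomposition \eqref{E:kunneth}, use the displayed convolution identity together with Lemma~\ref{lemma:product} to compute $\widetilde{\partial}_i[\C_\lambda\otimes\mathcal{F}] = [\C_\lambda\otimes\partial_i\mathcal{F}]$, and conclude by the $K_T(pt)$-linearity of $\partial_i$. Your extra remark that $\partial_i$ preserves $K_G(G/B)$ (because $\pi_i$ is $G$-equivariant) makes explicit a point the paper leaves implicit in writing $[\mathcal{L}_\lambda]\boxtimes\partial_i[\mathcal{F}]\in K_G(G/B\times G/B)$, but the underlying argument is the same.
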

\begin{proof} The first equality is proved in Proposition \ref{prop:eqdelta}. By K{\"u}nneth formula, it suffices to check the second 
equality on elements $[\C_\lambda \otimes \mathcal{F}]\in K_B(G/B)$, where $\mathcal{F}$ is $G$-linearized.
Then from Lemma \ref{lemma:product}: \[ \widetilde{\partial}_i [\C_\lambda \otimes \mathcal{F}] =  \iota^*([\mathcal{L}_\lambda] \boxtimes \partial_i[\mathcal{F}] )= \iota^* [\mathcal{L}_\lambda] \otimes \iota^* m^* \partial_i[\mathcal{F}] =  [\mathbb{C}_\lambda] \otimes \partial_i[\mathcal{F}] = \partial_i [\C_\lambda \otimes \mathcal{F}] \/, \] using that $\partial_i$ is $K_B(pt)$-linear, and that $m \circ \iota = id_{G/B}$. \end{proof}

\subsubsection{Atiyah-Hirzebruch approach} {Denote by $\overline{\Psi}_P$ the isomorphism
from \eqref{E:kunneth} for $X=G/P$.} Then
\[{\overline{\Psi}_{B}}:K_T(G/B)\xrightarrow{\sim} K_G(G/B\times G/B)=R(T)\otimes_{{R(G)}}R(T),\]
sends $[\mathbb{C}_\lambda] \otimes [\calL_\mu]$ to $e^\lambda\otimes e^\mu$. In the literature this is also referred as the Atiyah-Hirzebruch isomorphism. Under the isomorphism $\overline{\Psi}_{B}$, the left and right actions $w^L, w^R$ from \S \ref{sec:KDem} correspond to the natural action of the Weyl group $W$ on the left or the right components of $R(T) \otimes_{R(G)} R(T)$. To be precise, from the definition of $\overline{\Psi}_B$, it follows that there are commutative diagrams:
\[ \begin{tikzcd}
K_T(G/B) \arrow{r}{\overline{\Psi}_{B}} \arrow[swap]{d}{w^L} & R(T)\otimes_{R(G)}R(T) \arrow{d}{w \otimes 1} \\%
K_T(G/B) \arrow{r}{\overline{\Psi}_{B}}& R(T)\otimes_{R(G)}R(T)
\end{tikzcd} \quad 
 \begin{tikzcd}
K_T(G/B) \arrow{r}{\overline{\Psi}_{B}} \arrow[swap]{d}{w^R} & R(T)\otimes_{R(G)}R(T) \arrow{d}{1\otimes w} \\%
K_T(G/B) \arrow{r}{\overline{\Psi}_{B}}& R(T)\otimes_{R(G)}R(T)
\end{tikzcd}
\]
The localization of $[\mathbb{C}_\lambda] \otimes [\calL_\mu]$
at $w \in W$ equals to $e^\lambda w(e^\mu)$, and this localization corresponds to the composition $\widetilde{m} \circ (id \otimes w)$, where 
$\widetilde{m}:R(T) \otimes_{R(G)} R(T) \to R(T)$ is the multiplication. Since $[\mathbb{C}_\lambda] \otimes [\calL_\mu]$ generate 
$K_T(G/B)$ as a $K_T(pt)$-algebra, this gives a commutative diagram:
\[
 \begin{tikzcd}
K_T(G/B) \arrow{r}{\overline{\Psi}_{B}} \arrow[swap]{d}{\iota_w^*} & R(T)\otimes_{R(G)}R(T) \arrow{d}{\widetilde{m} \circ (id \otimes w)} \\%
R(T) \arrow{r}{=}& R(T)
\end{tikzcd}
\]
One can also define the analogues of the left and right Demazure operators as follows.
Recall the operator on $R(T)= K_B(pt)$:
\[\partial'_i = \frac{id}{1- e^{\alpha_i}} -\frac{ e^{\alpha_i} }{1- e^{\alpha_i}}s_i.\]
\begin{prop} With notation as in \S \ref{sec:KDem}, and
under the isomorphism ${\overline{\Psi}_{B}}$, 
\[\delta_i=\partial'_i\otimes 1, \textit{ \quad and \quad}\partial_i=1\otimes \partial_i'.\]
\end{prop}
\begin{proof}
The first identity is already proved in Proposition \ref{prop:eqdelta}(b). To prove the second one, for any torus weights $\lambda$ and $\mu$, 
{\[ \overline{\Psi}_{B} \partial_i ([\mathbb{C}_\lambda] \otimes [\calL_\mu]) = \overline{\Psi}_{B}([\mathbb{C}_\lambda] \otimes \partial_i [\calL_\mu]) = e^\lambda \otimes (\partial_i [\calL_\mu]|_{1.B}) = e^\lambda \otimes \partial_i'(e^\mu) \/, \]
where the last equality follows from the equation \eqref{equ:partiali}.}
\end{proof}
By utilizing these operators one may write expressions for the more general Demazure-Lusztig operators from \S \ref{sec:KDem}. For instance, the operator corresponding to $\mathcal{T}_i^R$ is $(1+y e^{\alpha_i}) (1 \otimes \partial_i') - id$.

We now turn to functorial properties. For any standard parabolic subgroup $P$ containing $B$, there is an isomorphism
{\[ \overline{\Psi}_{P}:K_T(G/P)\xrightarrow{\sim} K_G(G/B\times G/P)=R(T)\otimes_{{R(G)}} K_G(G/P) \/.\]
The localization at $1.P$ gives an identification ${K_G(G/P) \simeq R(T)^{W_P} \simeq R(P)}$ 
as $K_G(pt)$-algebras, where $W_P$ is the Weyl group of $P$.} Let $w_0^P$ be the longest element in $W_P$.

\begin{prop} The natural projection $\pi:G/B\rightarrow G/P$ gives the following commutative diagrams:
\[ \begin{tikzcd}
K_T(G/B) \arrow{r}{\overline{\Psi}_{B}} \arrow[swap]{d}{\pi_*} & R(T)\otimes_{R(G)}R(T) \arrow{d}{1\otimes \partial'_{w_0^P}} \\%
K_T(G/P) \arrow{r}{\overline{\Psi}_{P}}& R(T)\otimes_{R(G)}R(T)^{W_P}
\end{tikzcd} \quad 
 \begin{tikzcd}
K_T(G/P) \arrow{r}{\overline{\Psi}_{P}} \arrow[swap]{d}{\pi^*} & R(T)\otimes_{R(G)}R(T)^{W_P} \arrow{d}{1\otimes \incl} \\%
K_T(G/B) \arrow{r}{\overline{\Psi}_{B}}& R(T)\otimes_{R(G)}R(T)
\end{tikzcd}
\]
where $\incl$ is the inclusion of $R(T)^{W_P} \hookrightarrow R(T)$.\end{prop}

\begin{proof} The commutativity of the second diagram follows from the definition of $\overline{\Psi}_{P}$ 
and the fact that $\pi^*[G \times^P V] = [G \times^B V] $ for any $P$-representation $V$, since these classes generate 
$K_T(G/P)$ over $K_T(pt)$. The commutativity of the first diagram can be proved as follows. If $P=G$, then 
$\pi_*(\calL_\lambda)=\chi(G/B,\calL_\lambda)=\partial_{w_0}'(e^\lambda)$, 
by the Demazure character formula (see e.g.~\cite[Theorem 5.1]{MSA:whittaker} 
for a simple proof and generalizations). Let now $P$ be arbitrary
and $[\mathcal{L}_\lambda] \in K_G(G/B)$. Then $\pi_*[\mathcal{L}_\lambda] = [\mathbb{V}_\lambda]$,
where $\mathbb{V}_\lambda$ is the (virtual) homogeneous bundle
$G \times^P \chi(P/B, \mathcal{L}_\lambda)$. But $P/B$ is the flag manifold $L/B_L$ where $L$ is the Levi subgroup 
of $P$ and $B_L = B \cap L$ is the Borel subgroup determined by $B$. The Weyl group of $L$ is $W_P$.~By the $P=G$ case, 
the representation $\chi(P/B, \mathcal{L}_\lambda)$ has character $\partial_{w_0^P}'(\lambda)$. Then the commutativity follows
from the fact that the isomorphism $K_G(G/P) \simeq R(T)^{W_P}$ is given by localization at $1.P$.
\end{proof}  


\bibliographystyle{halpha}
\bibliography{leftdiv}

\end{document}